\theoremstyle{plain}
\newtheorem{theorem}{Theorem}[section]
\newtheorem{lemma}[theorem]{Lemma}
\newtheorem{corollary}[theorem]{Corollary}
\newtheorem{proposition}[theorem]{Proposition}
\theoremstyle{definition}
\newtheorem{definition}[theorem]{Definition}
\theoremstyle{remark}
\newtheorem{remmark}[theorem]{Remark}
\newtheorem{example}[theorem]{Example}
\numberwithin{equation}{section}
\newcommand{\tsp}[1]{{}^t \! #1}
\newcommand{\ii}{\mathrm{i}}
\newcommand{\tr}{\operatorname{Tr}}
\newcommand{\supp}{\operatorname{supp}}
\newcommand{\im}{\operatorname{im}}
\newcommand*\diff{\mathop{}\!\mathrm{d}}
\newcommand{\Id}{\operatorname{Id}}
\newcommand{\R}{\mathbb R}
\newcommand{\N}{\mathbb N}
\newcommand{\Z}{\mathbb Z}
\newcommand{\CC}{\mathbb C}
\newcommand{\CCC}{\mathscr{C}}
\newcommand{\T}{\mathbb T}
\newcommand{\PP}{\mathbb{P}}
\newcommand{\F}{\mathcal F}
\newcommand{\Aut}{\operatorname{Aut}}
\newcommand{\Ad}{\operatorname{Ad}}
\newcommand{\G}{\mathcal{G}}
\newcommand{\g}{\mathfrak{g}}
\newcommand{\f}{\mathfrak{f}}
\newcommand{\gt}{\mathfrak{t}}
\newcommand{\vphi}{\varphi}
\newcommand{\rr}{\rightrightarrows}
\newcommand{\act}{\curvearrowright}
\newcommand{\lag}{\langle}
\newcommand{\rag}{\rangle}
\newcommand{\op}{\operatorname{Op}}
\newcommand{\ttt}{\mathfrak{t}}
\newcommand{\Lie}{\operatorname{Lie}}
\newcommand{\hd}{\Omega^{1/2}}
\newcommand{\rk}{\operatorname{rk}}
\newcommand{\m}{\operatorname{\mathcal{M}}}
\newcommand{\pr}{\operatorname{pr}}
\newcommand{\ev}{\operatorname{ev}}
\newcommand\isomto{\stackrel{\sim}{\smash{\longrightarrow}\rule{0pt}{0.4ex}}}
\begin{document}

\title[Filtered calculus and crossed product]{Filtered Calculus and crossed products by \texorpdfstring{$\R$}{R}-actions}

\author[Cl\'ement Cren]{Cl\'ement Cren}

\address{Cl\'ement Cren, Univ Paris Est Creteil, Univ Gustave Eiffel, CNRS, LAMA UMR8050, F-94010 Creteil, France}
\email{\href{mailto:clement.cren@u-pec.fr}{clement.cren@u-pec.fr}}
\subjclass{Primary 47G30, 58H05 ; Secondary 46L89, 58J22, 22E25}

\keywords{Pseudodifferential calculus, Groupoids, Non-commutative geometry, Analysis on Lie groups.}

\begin{abstract}
We show an isomorphism between the kernel of the $C^*$ algebra of the tangent groupoid of a filtered manifold and the crossed product of the order $0$ pseudodifferential operators in the associated filtered calculus by a natural $\R$-action. This isomorphism is constructed in the same way as in the classical pseudodifferential calculus by Debord and Skandalis. The proof however relies on a structure result for the $C^*$-algebra of graded nilpotent Lie groups which did not appear in the commutative case and extends a result of Epstein and Melrose in the case of contact manifolds.
\end{abstract}
\maketitle

\section*{Introduction}

This paper deals with an alternate viewpoint of calculus on filtered manifolds. Filtered manifolds are manifolds equipped with a filtration of their tangent bundle by subbundles with a compatibility with the Lie bracket of vector fields. These manifolds have an alternate pseudodifferential calculus, which allows the construction of parametrices for hypoelliptic, but not elliptic, differential operators. Classical examples gather contact \cite{EpsteinMelrose,BaumVanerp}, CR \cite{FollandSteinEstimate}, and Heisenberg manifolds \cite{bealsgreiner}, but also foliated manifolds \cite{HilsumSkandalis,ConnesMoscovici}. More recently, manifolds with parabolic geometries were also considered \cite{dave2017graded}. These new advances are all thanks to a new approach on the pseudodifferential calculus using groupoid methods \cite{van_erpyunken, AndroulidakisMohsenYunken} (there is another approach relying on harmonic analysis techniques in \cite{FermanianFischer}).
We adapt a construction of Debord and Skandalis \cite{debord2014adiab} to the filtered case. They showed for the classical calculus an isomorphism between the $C^*$-algebra of pseudodifferential operators (of order 0) on a manifold $M$ and an algebra constructed from the tangent groupoid and its zoom action. Indeed if $\T M$ is the tangent groupoid of $M$ we can restrict it to $\T^+M \rr M \times \R_+$. It then sits in the exact sequence:
$$\xymatrix{0 \ar[r] & \CCC_0(\R^*_+,\mathcal{K}) \ar[r] & C^*(\T^+M) \ar[r] & \CCC_0(T^*M) \ar[r] & 0.}$$
Now the zoom action $(\alpha_{\lambda})_{\lambda > 0}$ is defined on $\T^+M$ via:
\begin{align*}
\alpha_{\lambda}(x,y,t) &= (x,y,\lambda^{-1}t) &(x,y,t) \in M \times M \times \R^*_+\\
\alpha_{\lambda}(x,\xi,0) &= (x,\lambda\xi,0) &(x,\xi) \in TM.
\end{align*}
If we remove the zero section from the fiber at zero we get the exact sequence:
$$\xymatrix{0 \ar[r] & \CCC_0(\R^*_+,\mathcal{K}) \ar[r] & C^*_0(\T^+M) \ar[r] & \CCC_0(T^*M\setminus M) \ar[r] & 0.}$$
Now the zoom action on \(T^*M\setminus M\) becomes free and proper. Taking the cross product yields the exact sequence:
$$\xymatrix{0 \ar[r] & \mathcal{K} \otimes \mathcal{K} \ar[r] & C^*_0(\T^+M)\rtimes_{\alpha} \R^*_+ \ar[r] & \CCC(\mathbb{S}^*M)\otimes \mathcal{K} \ar[r] & 0.}$$
Debord and Skandalis related this sequence to the one of pseudodifferential operators:
$$\xymatrix{0 \ar[r] & \mathcal{K} \ar[r] & \Psi^*(M) \ar[r] & \CCC(\mathbb{S}^*M) \ar[r] & 0}$$
where $\Psi^*(M)$ is the closure of order $0$ operators in the algebra of bounded operators on $L^2(M)$ and the arrow on the right is the principal symbol map. As an important step, they showed that any pseudodifferential operator of order $m$ could be written as an integral of the form:
$$\int_0^{+\infty}t^{-m}f_t\frac{\diff t}{t}$$
where $(f_t)_{t\geq 0}$ is a function in a suitable ideal of the Schwartz algebra of the tangent groupoid.
In the context of filtered manifold one could perform the same type of construction. We thus define the appropriate Schwartz algebra on the tangent groupoid and show that pseudodifferential operators can be written with an integral \textit{à la} Debord-Skandalis. This time however, the identification at the level of symbols is not trivial at all. To do this we use Kirillov's theory and Pedersen's stratification of the coadjoint orbits to decompose the algebra $\Sigma(G)$ of principal symbols of order $0$ on a graded Lie group $G$ into a finite sequence of nested ideals:
$$0 = \Sigma_0(G) \triangleleft \Sigma_1(G) \triangleleft \cdots \triangleleft \Sigma_r(G) = \Sigma(G),$$
such that for all $i \geq 0$ we have $\faktor{\Sigma_i(G)}{\Sigma_{i-1}(G)} \cong \CCC_0(\faktor{\Lambda_i}{\R^*_+},\mathcal{K}_i)$. Here $\Lambda_i$ is a certain subset of the set of coadjoint orbits $\faktor{\g^*}{G}$, so that the set $\faktor{\Lambda_i}{\R^*_+}$ is locally compact Hausdorff, and $\mathcal{K}_i$ is an algebra of compact operators. We also extend Pedersen's stratification to smooth families of graded groups in Section \ref{Section:Stratification for Family}, thus obtaining a decomposition of the algebra of principal symbols for families.  This decomposition generalizes the decomposition of symbols of Epstein and Melrose in the case of contact manifolds (i.e. for the Heisenberg group) \cite{EpsteinMelrose}. Combining this with the construction on the tangent groupoid gives an isomorphism:
$$\Psi_H^*(M) \rtimes \R \cong C^*_0(\T^+_HM).$$
Here $\T_H^+M$ is the restriction over $M\times \R_+$ of the tangent groupoid of the filtered manifold $(M,H)$ and the $\R$ action on the $C^*$-algebra of order $0$ pseudodifferential operators in the filtered calculus is constructed so that the isomorphism is $\R^*_+$-equivariant (for the dual action on the left and the zoom action on the right). As a corollary of our construction, we derive a KK-equivalence between the algebras of pseudodifferential operators of order $0$ for the classical and filtered calculus, extending the one for the symbols obtained in \cite{mohsen2020index} from ideas of \cite{BaumVanerp}. This equivalence shows that one cannot hope to obtain new K-theoretic invariants from the filtered calculus (other than the ones that were previously obtainable with the classical calculus). The majority of the results in this article were also obtained by Ewert in \cite{Ewert} using generalized fixed point algebras methods. Her version of the calculus also differs from the one used here (although the $C^*$-completions of non-positive order operators are the same). We directly show the equivalence of the van Erp-Yuncken approach and the Debord-Skandalis approach of pseudodifferential calculus. We also use a different decomposition of the symbol algebra than the one in \cite{Ewert} giving a nicer description of the subquotients in the symbol algebra. Finally we show an isomorphism $\Psi^*_H(M)\rtimes \R \cong C^*_0(\T^+_HM)$ which is stronger than the Morita equivalence $C^*_0(\T^+_HM)\rtimes \R^*_+ \sim \Psi^*_H(M)$ obtained in \cite{Ewert}.

\section{Representation theory of graded nilpotent groups}

Let $G$ be a graded nilpotent Lie group. This means that $G$ is a simply connected Lie group and that its lie algebra $\g$ admits a decomposition $\g = \bigoplus_{j = 1}^r \g_j$ compatible with the Lie bracket in the following sense: $\forall j,k, \left[ \g_j,\g_k \right] \subset \g_{j+k}$ (we make the convention that $\g_{\ell} = \{0\}$ whenever $\ell > r$). Given such a group, one can define a family of inhomogeneous dilations $\delta_{\lambda} \in \Aut(G)$ for $\lambda \in \R^*_+$. At the level of the Lie algebra $\g$ they are linear maps given by:
$$\forall \lambda >0, \forall 1\leq k \leq r, \forall X \in \g_k, \diff \delta_{\lambda}(X) = \lambda^k X.$$
The map $\diff\delta_{\lambda}$ is well defined and is a Lie algebra automorphism and hence lifts to a Lie group automorphism of $G$ denoted $\delta_{\lambda}$. 

\begin{example}Let $G = \R^n$ with the trivial grading (everything has degree 1), we get the usual dilations $\diff\delta_{\lambda}(X) = \lambda X$ for every $X\in \R^n$.\end{example}

\begin{example}Let $G = H_{2n+1}$ be the Heisenberg group of dimension $2n+1$. Its Lie algebra is generated by elements $X_1,\cdots,X_n,Y_1,\cdots,Y_n,Z$ with the commutation relations $[X_j,Y_k] = \delta_{j,k}Z$ and all the other brackets being zero. The decomposition of its Lie algebra $\g$ is given by $\g_1 = \left\langle X_1,\cdots,X_n,Y_1,\cdots,Y_n \right\rangle$ and $\g_2 = \left\langle Z \right\rangle$. The dilation is thus given by $\diff\delta_{\lambda}(X_k) = \lambda X_k, \diff\delta_{\lambda}(Y_k) = \lambda Y_k, \diff\delta_{\lambda}(Z) = \lambda^2 Z$.
\end{example}

\begin{remmark}
    In the literature, graded Lie groups are also called Carnot groups. The terminology of Carnot groups might also refer specifically to the graded Lie groups for which the Lie algebra is generated by \(\g_1\). This specific type of Lie groups and algebras appears in sub-Riemannian geometry (see e.g. \cite{GromovCarnot}).
\end{remmark}

The action of $\R^*_+$ on $G$ gives by duality an action on $\widehat{G}$. This action, still denoted by $\delta_{\lambda}$, stabilizes the trivial representation $1$. The restricted action on $\widehat{G} \setminus \{1\}$ is free and proper. We wish to understand the structure of $C^*_0(G)\rtimes \R^*_+$. One could hope to write $C^*_0(G)$ as some crossed product over $\R$, $A \rtimes \R$, for a $\R-C^*$-algebra $A$ so that $\delta_{\lambda}$ would become the dual action.

In the commutative case $C^*_0(\R^n) = \CCC_0(\R^n\setminus\{0\})$. We thus have $C^*_0(\R^n)\rtimes \R^*_+ = \CCC\left( \faktor{\R^n\setminus \{0\}}{\R^*_+}\right) \otimes \mathcal{K}(L^2(\R)) = \CCC(\mathbb{S}^{n-1})\otimes \mathcal{K}(L^2(\R))$. In \cite{debord2014adiab}, Debord and Skandalis showed that this algebra naturally appears as the algebra of principal symbols of pseudodifferential operators (of order $0$). We wish to do the same here in the general setting of graded nilpotent groups. 

To better understand the structure of $C^*_0(G)$ we study the representation theory of $G$ through Kirillov's theory \cite{Kirillov} and the stratification of its unitary dual $\widehat{G}$ constructed by Pukánszky \cite{Puck}.

Kirillov's orbit method gives a homeomorphism between $\widehat{G}$ equipped with the hull-kernel topology and the set of coadjoint orbits $\faktor{\g^*}{\Ad^*(G)}$ equipped with the quotient of the euclidean topology. This homeomorphism is $\R^*_+$-equivariant for the natural actions obtained by composition with $\delta_{\lambda}$ on the representations and by the transpose of $\diff \delta_{\lambda}$ on $\g^*$. 
Consider $(X_1,\cdots,X_n)$, a basis of $\g$ consisting of eigenvectors for the dilations, i.e. $\diff\delta_{\lambda}(X_i) = \lambda^{q_i} X_i$. Here $q_i$ is the integer such that $X_i \in \g_{q_i}$. If the $q_i$ are in ascending order then $(X_1,\cdots,X_n)$ is a Jordan-Hölder basis. Following Pukánszky \cite{Puck} and Pedersen \cite{PedersenQuant} (see also \cite{CorwinGreenleaf}), we obtain a decomposition of $\g^* \setminus \{0\}$ with the following properties:

\begin{theorem}[Pukánszky, Pedersen] There exist an integer $d$, and algebraic sets $\Omega_i \subset \g^*\setminus \{0\}, 1\leq i \leq d $, that are both $G$ and $\R^*_+$-invariant, such that:
\begin{itemize}
\item the sets $W_i = \bigcup_{1\leq j \leq i}\Omega_{j}$ are Zariski open, invariant for the actions of $G$ and $\R^*_+$, and $W_d=\g^* \setminus \{0\}$.
\item for each $i$ there exist complementary vector subspaces of $\g^*$: $V_i^S$ and $V_i^T$, such that each $G$-orbit in $\Omega_i$ meets $V_i^T$ in exactly one point. The sets $C_i = \Omega_i \cap V_i^T$ are $\R^*_+$-invariant algebraic subsets and define $\R^*_+$-equivariant cross-sections for the $G$-orbits in $\Omega_i$.
\item there is a birational nonsingular bijection $\psi_i \colon C_i \times V_i^S \to \Omega_i$ such that if $p_i^T$ denotes the projection onto $V_i^T$ parallel to \(V_i^S\), then for each $v \in \Sigma_i$, the map $p_i^T \circ \psi_i(v,\cdot) \colon V_i^S \to V_i^T$ is a polynomial, whose graph is the orbit $G\cdot v$. The composition $\pr_1 \circ \psi_i^{-1} \colon \Omega_i \to C_i$ is $G$-invariant and its quotient through $G$ is the aforementioned cross-section.
\end{itemize}
\end{theorem}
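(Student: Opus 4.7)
The plan is to adapt the jump-index stratification of Pukánszky, refined by Pedersen, to the graded setting. With the Jordan--Hölder basis $(X_1,\ldots,X_n)$ of dilation eigenvectors fixed, let $\g_{(k)} := \mathrm{span}(X_k, X_{k+1}, \ldots, X_n)$; because the weights $q_k$ are non-decreasing and $[\g_j,\g_k] \subset \g_{j+k}$, each $\g_{(k)}$ is at once an ideal of $\g$ and a dilation-stable subspace. For $\xi \in \g^*\setminus\{0\}$, set $\xi_k := \xi|_{\g_{(k)}}$ and define the jump index set
\[
e(\xi) := \bigl\{\, k \in \{1,\ldots,n\} : X_k \notin \g(\xi_{k+1}) + \g_{(k+1)} \,\bigr\},
\]
where $\g(\eta)$ denotes the coadjoint stabilizer of $\eta$. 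Standard Kirillov theory gives that $|e(\xi)|=\dim(G\cdot\xi)$ is even and that $e(\xi)$ depends only on the coadjoint orbit through $\xi$.

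For each admissible $e \subset \{1,\ldots,n\}$ I would set $\Omega_e := \{\xi : e(\xi) = e\}$. These subsets are $G$-invariant by construction, and $\R^*_+$-invariant because every $\g_{(k)}$ is spanned by dilation eigenvectors and the defining conditions of $e(\xi)$ depend only on this flag. Algebraicity — more precisely, local closedness as an algebraic subset of $\g^*$ — follows by expressing the stabilizer conditions as vanishings and non-vanishings of minors of the skew-symmetric matrices $\bigl(\xi([X_i, X_j])\bigr)_{i,j \geq k}$ built from the flag. Listing the finitely many occurring $e$'s in decreasing lexicographic order $e^{(1)} > e^{(2)} > \cdots > e^{(d)}$ and setting $\Omega_i := \Omega_{e^{(i)}}$, upper semicontinuity of the jump set for this ordering guarantees that $W_i = \bigcup_{j \leq i}\Omega_j$ is Zariski open, with $W_d = \g^*\setminus\{0\}$.

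For the slice take $V_i^S := \mathrm{span}(X_k^* : k \in e^{(i)})$ and $V_i^T := \mathrm{span}(X_k^* : k \notin e^{(i)})$ in the dual basis; both are sums of weight eigenspaces, hence $\R^*_+$-invariant. The central technical step is to verify that every $G$-orbit in $\Omega_i$ meets $V_i^T$ in exactly one point and to produce the birational map $\psi_i \colon C_i \times V_i^S \to \Omega_i$. I would proceed by induction on $n$, reducing along the central ideal $\g_{(n)}$: at each step the top index is either a jump index, triggering a Mackey-type induction from the quotient representation, or not, yielding a central-character reduction. Nilpotency makes the coadjoint action polynomial in exponential coordinates, which yields simultaneously the polynomial graph property of $p_i^T \circ \psi_i(v,\cdot)$ and a rational inverse for $\psi_i$ on a Zariski open subset. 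The $G$-invariance of $\pr_1 \circ \psi_i^{-1}$ is then automatic from the cross-section property.

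The main obstacle is the explicit inductive construction of $\psi_i$: tracking how the splitting $V_i^S \oplus V_i^T$ behaves under successive central reductions, and verifying that the resulting polynomial maps assemble into a birational isomorphism that simultaneously respects the $\R^*_+$-action, is the delicate combinatorial and algebraic heart of the argument. Once this piece is in hand, the $G$-invariance of the cross-section, the algebraicity of the $C_i$ and the Zariski openness of the $W_i$ follow essentially by bookkeeping.
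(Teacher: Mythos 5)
Your approach is the same as the paper's: fix a Jordan--H\"older basis of dilation eigenvectors, invoke the Puk\'anszky/Pedersen jump-index stratification, and observe that the eigenvector choice makes every ingredient ($\Omega_i$, $V_i^S$, $V_i^T$, $C_i$, $\psi_i$) automatically $\R^*_+$-stable and $\R^*_+$-equivariant. That last observation is in fact the only genuinely new content of the theorem in this context, and you identify it correctly.

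However, the paper does \emph{not} re-derive Puk\'anszky's theorem: its proof is a single paragraph that cites \cite{Puck} for the whole stratification machinery and records only the equivariance remark. You instead attempt to reconstruct the entire argument, and you candidly flag where your reconstruction stops being a proof: the inductive construction of the birational map $\psi_i$ (tracking the splitting $V_i^S \oplus V_i^T$ through successive central reductions, producing a genuine birational isomorphism respecting the dilations, and verifying the polynomial-graph property). That is not bookkeeping; it is the substance of Puk\'anszky's paper and several pages of careful algebra. As written, your proposal is therefore a plan with the hardest step deferred, not a proof. For this level of generality, citing Puk\'anszky (as the paper does) is the correct move; what you should actually \emph{prove} is the one-paragraph equivariance claim, which you have.

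One further caution worth flagging: your $e(\xi)$ is a single subset of $\{1,\dots,n\}$, i.e.\ Puk\'anszky's \emph{coarse} jump set. That suffices for the theorem as stated, and it is what the paper cites here, but in Section~3 the paper deliberately switches to Pedersen's \emph{fine} stratification (a tuple $\mathcal{J}_\xi = (J^1_\xi,\dots,J^m_\xi)$ of jump sets) because only the fine version yields the trivialization of the subquotient continuous fields via Lipsman--Rosenberg. If you were going to fill in the construction in detail, you would need to carry out the finer bookkeeping to be compatible with the rest of the paper.
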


\begin{proof}
The proof can be found in \cite{Puck} for an arbitrary nilpotent Lie group. Without reproving the whole theory, we will give in Section \ref{Section:Stratification for Family} the construction of the sets \(\Omega_j\) in order to expand this stratification to any smooth family of graded groups. In our context, our choice of basis makes all the elements in the construction stable under the $\R^*_+$-action. For instance, the jump indices of an element in $\g^*$ are the same as the other elements of its orbit through the $\R^*_+$-action. Therefore the sets $\Omega_i$, $W_i$, $C_i$... are stable under the $\R^*_+$-action and the maps $\psi_i$ are equivariant. 
\end{proof}

\begin{corollary}With the same notations, for every $1 \leq i \leq d$ the natural map $C_i \to \faktor{\Omega_i}{G}$ is a $\R^*_+$-equivariant homeomorphism. In particular the action $\R^*_+ \act \faktor{\Omega_i}{G}$ is free and proper.\end{corollary}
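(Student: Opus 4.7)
The plan is to build the inverse map explicitly using the data provided by the theorem, since the theorem already says that $C_i$ meets each $G$-orbit in exactly one point and that the projection $\pr_1 \circ \psi_i^{-1}\colon \Omega_i \to C_i$ is $G$-invariant. So bijectivity is essentially given, and the work is to identify topologies and then to transport freeness and properness from an ambient space where they are transparent.

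First I would set $\phi_i \colon C_i \to \faktor{\Omega_i}{G}$ to be the composition of the inclusion $C_i \hookrightarrow \Omega_i$ with the quotient map. Continuity is immediate, and bijectivity follows from the second bullet of the theorem (each $G$-orbit in $\Omega_i$ meets $V_i^T$ in exactly one point, and $C_i = \Omega_i \cap V_i^T$). For continuity of the inverse, I would use that $\pr_1 \circ \psi_i^{-1}\colon \Omega_i \to C_i$ is continuous (as $\psi_i$ is a birational nonsingular bijection, hence a homeomorphism in the euclidean topology on its domain of definition, which is all of $\Omega_i$ here) and $G$-invariant. By the universal property of the quotient it descends to a continuous map $\faktor{\Omega_i}{G} \to C_i$, and this descended map is the inverse of $\phi_i$ since $\pr_1 \circ \psi_i^{-1}$ realizes the cross-section.

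The $\R^*_+$-equivariance of $\phi_i$ is inherited from the $\R^*_+$-equivariance of $\psi_i$ (and of the inclusion $C_i \hookrightarrow \Omega_i$) established in the theorem. For the last assertion, once $\faktor{\Omega_i}{G} \cong C_i$ equivariantly, I would observe that $C_i \subset \g^* \setminus \{0\}$ is $\R^*_+$-invariant and that the action of $\R^*_+$ on $\g^* \setminus \{0\}$ is free and proper: in the dual basis of $(X_1,\dots,X_n)$ the action reads $(\xi_1,\dots,\xi_n) \mapsto (\lambda^{q_1}\xi_1,\dots,\lambda^{q_n}\xi_n)$ with all weights $q_i \geq 1$, so the orbit map at any nonzero point is a proper injective immersion of $\R^*_+$. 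Properness of the action on an invariant (closed in $\g^*\setminus\{0\}$) subset follows.

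The main subtlety I anticipate is verifying that $\pr_1 \circ \psi_i^{-1}$ is genuinely continuous as a map from $\Omega_i$ with its euclidean topology: the theorem only states that $\psi_i$ is a birational nonsingular bijection, so I would want to check (or invoke) that in this setting $\psi_i^{-1}$ is a regular morphism on all of $\Omega_i$, hence continuous for the euclidean topologies. Everything else is formal manipulation of the data in the theorem.
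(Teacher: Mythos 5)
Your proof is correct and follows essentially the same route as the paper: identify $C_i$ with the quotient via the cross-section and the $G$-invariant retraction $\pr_1\circ\psi_i^{-1}$, then transport freeness and properness from the linear diagonal action on a vector subspace. One small remark: the parenthetical requirement that $C_i$ be closed in $\g^*\setminus\{0\}$ is unnecessary (and need not hold, since $\Omega_i$ is only locally closed) — properness of an action restricts to any invariant subspace, closed or not, because the preimage of a compact $K\times L$ with $K,L$ in the invariant subset is the same whether computed in $G\times Y$ or $G\times X$.
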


\begin{proof}
The map is a cross section of the action, therefore a homeomorphism. Its equivariance was also stated in the previous theorem. The action $\R^*_+\act C_i$ is the restriction of an action on the vector space $V_i^T \setminus \{0\}$ (remember that $0$ corresponds to the trivial representation that we have removed). This action is made of diagonal matrices of dilations by powers of $\lambda > 0$. It is therefore free and proper. The equivariant homeomorphism then transfers this result to $\R^*_+ \act \faktor{\Omega_i}{G}$.
\end{proof}

\begin{corollary}\label{Stratification}There exists a filtration $\emptyset = V_0 \subset V_1 \subset \cdots \subset V_d = \widehat{G}\setminus\{0\}$ by open subsets such that the each $\Lambda_i = V_i \setminus V_{i-1}$ is Hausdorff, $\R^*_+$-invariant and the action $\R^*_+ \act \Lambda_i$ is free and proper. In particular the sets $\faktor{\Lambda_i}{\R^*_+}$ are Hausdorff.
\end{corollary}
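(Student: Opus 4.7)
The plan is to transport the filtration $\emptyset \subset W_1 \subset \cdots \subset W_d = \g^*\setminus\{0\}$ from the theorem across Kirillov's homeomorphism $\widehat{G}\setminus\{1\} \cong (\g^*\setminus\{0\})/\Ad^*(G)$ and then read off each property from the two bullets already available: openness/$G$-invariance of $W_i$, and the existence of the $\R^*_+$-equivariant cross-section $C_i \to \Omega_i/G$ provided by the previous corollary.

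More precisely, I would set $V_i$ to be the image of $W_i$ in $\widehat{G}\setminus\{1\}$ under the Kirillov identification. Since the theorem asserts that $W_i$ is Zariski-open in $\g^*\setminus\{0\}$, it is in particular open in the euclidean topology, and since it is $G$-invariant its image is open in the quotient topology; Kirillov's theorem then upgrades this to openness in the hull-kernel topology on $\widehat{G}\setminus\{1\}$. The chain $V_0 = \emptyset \subset V_1 \subset \cdots \subset V_d = \widehat{G}\setminus\{1\}$ follows from the corresponding chain on the $W_i$'s, which is exhaustive since $W_d = \g^*\setminus\{0\}$.

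Next, with this definition $\Lambda_i = V_i\setminus V_{i-1}$ is precisely the image of $W_i\setminus W_{i-1} = \Omega_i$ in $\widehat{G}\setminus\{1\}$, i.e.\ $\Lambda_i \cong \Omega_i/G$. The previous corollary then gives a $\R^*_+$-equivariant homeomorphism $C_i \isomto \Omega_i/G \cong \Lambda_i$, and states that the $\R^*_+$-action on $\Omega_i/G$ is free and proper; transporting along this homeomorphism yields freeness and properness of $\R^*_+ \act \Lambda_i$. Hausdorffness of $\Lambda_i$ is also immediate: $C_i \subset V_i^T$ is Hausdorff as a subspace of a finite-dimensional vector space, and the homeomorphism transfers this to $\Lambda_i$.

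The proof is essentially formal once the theorem and its corollary are in place, so I do not anticipate a real obstacle. The one mild subtlety worth flagging is the compatibility of topologies: the theorem describes $W_i$ in the Zariski topology, while the statement we need concerns the hull-kernel topology on $\widehat{G}$; this is bridged by the explicit statement in Kirillov's theorem that his bijection is a homeomorphism when $\g^*/\Ad^*(G)$ is equipped with the quotient of the euclidean topology, together with the standard fact that a Zariski-open subset is euclidean-open.
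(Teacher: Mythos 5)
Your proposal matches the paper's proof: both define $V_i = W_i/G$ via the Kirillov map, note that Zariski-openness and $G$-invariance of $W_i$ give openness of $V_i$, and transport Hausdorffness and the free and proper $\R^*_+$-action from $C_i$ to $\Lambda_i$ through the previous corollary's equivariant homeomorphism. The only difference is that you spell out the topology-compatibility point (Zariski vs.\ euclidean vs.\ hull-kernel) more explicitly, which the paper leaves implicit.
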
 

\begin{proof}
Define $V_i = \faktor{W_i}{G}$, it defines a filtration of $\widehat{G}\setminus \{0\}$ through Kirillov's map. Moreover the previous corollary gives a homeomorphism between $\Lambda_i$ and $C_i$ which is thus Hausdorff. Since the action $\R^*_+\act C_i$ is free and proper and the homeomorphism $C_i \cong \Lambda_i$ is $\R^*_+$-equivariant, then the action $\R^*_+\act \Lambda_i$ is also free and proper.
\end{proof}

\begin{remmark}In both cases, the last stratum corresponds to the elements in $\g^*$ whose corresponding representation is one dimensional. All the other unitary irreducible representations are infinite dimensional because of Engel's theorem.\end{remmark}

This description of Pedersen's fine stratification, without the details on the construction, does not show the differences with Pukánszky's coarse stratification. Without going into too much detail, we will use the following properties of Pedersen's stratification:

\begin{theorem}[Pedersen \cite{PedersenQuant} ; Lipsman, Rosenberg \cite{LipRos}]Let $\Omega_i\subset \g^*$ be a stratum of Pedersen's fine stratification. Then there exists an integer $n \in \N$ such that every representation in the corresponding set $\Lambda_i = \faktor{\Omega_i}{G} \subset \widehat{G}$ can be realized on $L^2(\R^n)$ with smooth vectors being the Schwartz functions $\mathcal{S}(\R^n)$ and the elements of $\g$ are realized as differential operators with polynomial coefficients. These coefficients have a (non-singular) rational dependence when the representation varies in $C_i$ (and its complexification).
\end{theorem}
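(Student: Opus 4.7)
The plan is to combine Kirillov's orbit method with Pedersen's explicit construction of polarizations to produce the desired realization. The starting point is that any point $v \in \g^*$ admits a Pukánszky polarization $\mathfrak{h}_v \subset \g$, namely a subalgebra satisfying $\dim \mathfrak{h}_v = \tfrac{1}{2}(\dim \g + \dim \g_v)$ where $\g_v$ is the radical of the skew form $B_v(X,Y) = \langle v,[X,Y]\rangle$. Kirillov's theory then realizes the class $[v]\in \widehat{G}$ as $\pi_v = \mathrm{Ind}_{H_v}^G \chi_v$ with $\chi_v(\exp X) = e^{i\langle v,X\rangle}$, and one expects that organizing this uniformly over $v \in C_i$ gives the theorem.

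First, I would argue that the jump indices of $v$ with respect to the Jordan--Hölder basis $(X_1,\dots,X_n)$ are constant on the stratum $\Omega_i$ (this is essentially built into the definition of the fine stratification), so $\dim(G/H_v) = n$ is constant and a common model space $L^2(\R^n)$ can be used. Pedersen's construction actually provides a canonical Vergne-type polarization $\mathfrak{h}_v$ whose coefficients in the fixed basis are rational functions of $v$, non-singular on $C_i$ and its complexification.

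Next, I would use coordinates of the second kind parametrized by the non-jump indices to identify $G/H_v \cong \R^n$. A direct computation with the Baker--Campbell--Hausdorff formula (which terminates by nilpotency) shows that the unitary operator $\pi_v(\exp X)$ has the form $(\pi_v(\exp X)f)(t) = e^{iP(v,t,X)}f(\phi(v,t,X))$ where $P$ and $\phi$ are polynomial in $(t,X)$ with coefficients rational in $v$, and the denominators coming from Pedersen's change of variables are invertible precisely on $C_i$. Differentiating at $X=0$ expresses $\diff\pi_v(X)$ as a differential operator in $t$ with polynomial coefficients and rational dependence on $v$. The identification of smooth vectors with $\mathcal{S}(\R^n)$ is then a standard result of Howe and Corwin--Greenleaf \cite{CorwinGreenleaf}: one checks that each $\diff\pi_v(X)$ preserves $\mathcal{S}(\R^n)$, and conversely that joint domains of powers of a chosen sub-Laplacian coincide with Schwartz functions.

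The main obstacle is the uniform \emph{rational} dependence on $v$; a naive stratum-by-stratum application of Kirillov's theory yields only a measurable, or at best piecewise-smooth, family of representations. Pedersen's fine stratification is designed precisely so that on each $\Omega_i$ one can simultaneously choose a rational cross-section $C_i$, a rational polarization $\mathfrak{h}_v$, and rational coordinates on $G/H_v$, with all singular loci avoided. I would therefore quote \cite{PedersenQuant,LipRos} for the existence of this simultaneous rational data and carry out only the verification that the induced representation built from it has the announced form. The family version needed later in Section~\ref{Section:Stratification for Family} would follow from the same argument applied fiberwise, once the stratification is shown to vary smoothly.
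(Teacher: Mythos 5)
The paper itself does not prove this theorem; it is stated purely as a citation to Pedersen \cite{PedersenQuant} and Lipsman--Rosenberg \cite{LipRos}, with only a one-line remark afterward ($n$ is half the orbit dimension). There is therefore no ``paper's own proof'' to compare against. Your outline correctly traces the argument found in those references: the essential content is that Pedersen's fine stratification is constructed precisely so that a canonical Vergne-type polarization $\mathfrak{h}_v$, a cross-section $C_i$, and Malcev coordinates of the second kind on $G/H_v$ can all be chosen simultaneously and \emph{rationally} in $v$ on each stratum, non-singularly on $C_i$ and its complexification; once this is in hand, the Baker--Campbell--Hausdorff computation exhibits $\pi_v(\exp X)$ as an exponential-polynomial operator with coefficients rational in $v$, differentiating gives $\diff\pi_v(X)$ as a polynomial-coefficient differential operator, and the identification of smooth vectors with $\mathcal{S}(\R^n)$ is the standard Howe/Corwin--Greenleaf argument via sub-Laplacian domains. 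You are explicit at the end that you would defer the existence of the simultaneous rational data --- the real substance of the theorem --- to the same references the paper cites, which is consistent with how the paper treats this result. A self-contained proof of the rational dependence would require reproducing much of Pedersen's stratification machinery, so treating it as a citation is the correct choice.

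One minor caveat: you first invoke ``Pukánszky polarizations'' before correcting to Pedersen's canonical Vergne-type construction. For nilpotent groups every polarization satisfies the Pukánszky condition automatically, so this is harmless, but the point of Pedersen's work is not merely that a polarization exists but that a specific, algorithmically chosen one depends rationally on $v$ across a stratum --- a stray generic polarization would not do. Your sketch does eventually land on the right object, so the logic is sound.
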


\begin{remmark}The integer $n$ correspond to half the dimension of any orbit in $\Omega_i$.\end{remmark}

\begin{example}For the Heisenberg group $G = H_{2n+1}$, we consider the dual coordinates $(x_1,\cdots,x_n,y_1,\cdots,y_n,z)$ on $\g^*$ with respect to the usual basis on $\g$. The stratification is then given by $\Omega_1 =  \{(x,y,z) \ / \ z \neq 0\}$ and $\Omega_2 = \{(x,y,0) \ / \ (x,y)\neq (0,0)\}$. When taking the quotient by the coadjoint action we get $\Lambda_1 \cong \R^*$ and $\Lambda_2 \cong \R^{2n}$. The $\R^*_+$ action is given by dilations on each components: $\delta_{\lambda|\Lambda_1} = \lambda^2$ and $\delta_{\lambda|\Lambda_2} = \lambda$.\end{example}

Given a graded Lie group, we can use Pedersen's stratification to decompose its $C^*$-algebra. Let us denote by $V_1 \subset \cdots \subset V_d = \widehat{G}$ Pedersen's stratification of the unitary dual, i.e. $V_i = \faktor{W_i}{G}$ where $W_i$ is the union of the first $i$-th strata in Pedersen's stratification of $\g^*$. We have $V_i\setminus V_{i-1} =\Lambda_i$. Corresponding to these open subsets are ideals $J_i := \bigcap_{\pi \in \widehat{G}\setminus V_i}\ker(\pi)$ of $C^*(G)$. We get an increasing sequence of ideals:
$$\{0\} = J_0 \triangleleft J_1 \triangleleft \cdots \triangleleft J_d = C^*(G)$$
and by construction $\widehat{\faktor{J_i}{J_{i-1}}} = \Lambda_i$. The subquotients thus have Hausdorff spectrum and therefore are continuous fields of $C^*$-algebras over $\Lambda_i$ according to \cite{Nilsen}. Using the result of Lipsman-Rosenberg stated above we have:

\begin{corollary}\label{SubquotientsCompact}For each $1 \leq i \leq d$ we have:
$$\faktor{J_i}{J_{i-1}} \cong \CCC_0(\Lambda_i,\mathcal{K}_i).$$
Here $\mathcal{K}_i$ denotes the algebra of compact operators on a separable Hilbert space (of dimension $1$ for $i = d$ and infinite dimensional otherwise).
\end{corollary}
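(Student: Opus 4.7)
The plan is to exhibit an explicit trivialization of the continuous field $J_i/J_{i-1}$ over $\Lambda_i$ using the Lipsman--Rosenberg family of representations. As noted in the text, the spectrum of $J_i/J_{i-1}$ is the Hausdorff space $\Lambda_i$, and by \cite{Nilsen} this algebra is realized as the $\CCC_0$-section algebra of a continuous field of $C^*$-algebras over $\Lambda_i$ whose fiber at $\pi$ is $C^*(G)/\ker\pi$. Since nilpotent Lie groups are of type I, every irreducible representation $\pi$ satisfies $\pi(C^*(G))=\mathcal{K}(H_\pi)$; combined with Lipsman--Rosenberg, each fiber is thus $\mathcal{K}_i=\mathcal{K}(\mathcal{H}_i)$ for the fixed Hilbert space $\mathcal{H}_i=L^2(\R^n)$, with $n$ half the common orbit dimension in $\Omega_i$ (and $\mathcal{H}_i=\CC$ when $i=d$).

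Using the homeomorphism $\Lambda_i\cong C_i$ from the previous corollary, together with the non-singular rational dependence on $\pi\in C_i$ of the Lipsman--Rosenberg realizations, I would assemble these fiberwise representations into a single $*$-homomorphism
$$\Phi_i\colon J_i/J_{i-1}\longrightarrow \CCC_0(\Lambda_i,\mathcal{K}_i),\qquad [a]\longmapsto (\pi\mapsto \pi(a));$$
the image lies in $\CCC_0$ rather than just $\CCC_b$ because the norm function $\pi\mapsto\|\pi(a)\|$ vanishes at infinity on the spectrum of any $C^*$-algebra. Injectivity is immediate from the definition $J_{i-1}=\bigcap_{\pi\in\widehat{G}\setminus V_{i-1}}\ker\pi$: if $\pi(a)=0$ for every $\pi\in\Lambda_i$, then $[a]=0$ in the subquotient.

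The main obstacle is surjectivity. The image $B_i:=\Phi_i(J_i/J_{i-1})$ is a $C^*$-subalgebra of $\CCC_0(\Lambda_i,\mathcal{K}_i)$ which is closed under multiplication by $\CCC_0(\Lambda_i)$, via the Dauns--Hofmann identification of $\CCC_0(\Lambda_i)$ with the centre of the multiplier algebra of $J_i/J_{i-1}$, and whose fiber at each $\pi$ is all of $\mathcal{K}_i$ because $\pi$ is irreducible and surjects onto the compacts. A standard partition-of-unity argument on the locally compact Hausdorff space $\Lambda_i$ then shows that any such full $\CCC_0(\Lambda_i)$-submodule of $\CCC_0(\Lambda_i,\mathcal{K}_i)$ must equal the whole section algebra, giving $B_i=\CCC_0(\Lambda_i,\mathcal{K}_i)$ and concluding the proof. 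The genuinely delicate point throughout is verifying that the rational/smooth Lipsman--Rosenberg dependence produces norm-continuous, and not merely measurable, sections, so that the partition-of-unity gluing remains inside $B_i$.
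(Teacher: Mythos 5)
Your argument is correct and follows the same route the paper has in mind: Hausdorff spectrum $\Rightarrow$ continuous field (Nilsen), and the Lipsman--Rosenberg coherent realization on $L^2(\R^n)$ with nonsingular rational dependence over $C_i \cong \Lambda_i$ supplies the trivialization. You simply make the evaluation map $\Phi_i$ and the Stone--Weierstrass-type surjectivity argument explicit where the paper leaves them implicit.
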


\begin{remmark}Compare this decomposition with the one given by Puckansky's stratification used in \cite{Ewert}. 
The later still gives subquotients that are continuous fields of $C^*$-algebras over the spectrum but we cannot ensure their triviality. 
Indeed, this triviality follows from the results of \cite{LipRos} which only apply to Pedersen's stratification. 
The difference between the two stratifications is not easy to see for groups of low dimension and depth (e.g. the Heisenberg group). 
Examples of graded groups for which the two stratifications differ are given at the end of \cite{Bonnet}.
\end{remmark}

We end this section with the definition and properties of quasi-norms on graded Lie groups. They will serve two purposes in the following. The first one will be to realize elements of $C^*(G)$ as symbols for the filtered calculus. It will also provide a section for the quotient map $\widehat{G}\setminus \{0\}\to \faktor{\widehat{G}\setminus \{0\}}{\R^*_+}$. It is the analog of a norm in the commutative case except now we want it to be homogeneous with respect to the inhomogeneous family of dilations.

\begin{definition}
A homogeneous quasi-norm on a Lie group $G$ is a continuous function $|\cdot | \colon G \to \R_+$ such that:
\begin{itemize}
\item $|g| = 0$ if and only if $g$ is the neutral element of $G$
\item $|\cdot|$ is homogeneous of degree 1, i.e. $\forall \lambda > 0, \forall g \in G, |\delta_{\lambda}(g)| = \lambda |g|$.
\end{itemize}
\end{definition}

Such a function always exists and any two of them are equivalent. Now given a homogeneous quasi-norm $|.|$ we can induce a similar function on $\g$ through the exponential map. By duality we obtain a similar function on $\g^*$ for which we keep the same notation. We then obtain a map on $\widehat{G}$ through Kirillov homomorphism and setting $|\Ad^*(G)\cdot \xi| := \inf_{g\in G}|\Ad^*(g)(\xi)|$. The function $|\cdot|\colon \widehat{G} \to \R_+$ is still continuous. It is homogeneous of degree 1 with respect to the dilations on $\widehat{G}$ and vanishes only at the class of the trivial representation. We then obtain

\begin{lemma}
Let $|\cdot|$ be a homogeneous quasi-norm on $G$. The canonical map 
$$\left\lbrace\pi \in \widehat{G} \ / \ |\pi| = 1\right\rbrace \to \faktor{\widehat{G}\setminus \{0\}}{\R^*_+}$$
is a homeomorphism.
\end{lemma}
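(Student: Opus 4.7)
The plan is to construct an explicit continuous inverse via normalization by the quasi-norm. I would first verify bijectivity of the canonical map $\iota \colon \{\pi\in\widehat{G} : |\pi|=1\}\to (\widehat{G}\setminus\{0\})/\R^*_+$. It is well defined because $|\pi|=1$ forces $\pi$ to differ from the trivial representation. For surjectivity, given $\pi\in\widehat{G}\setminus\{0\}$ one has $|\pi|>0$; by homogeneity, $|\delta_{|\pi|^{-1}}(\pi)| = |\pi|^{-1}|\pi| = 1$, and this element lies in the $\R^*_+$-orbit of $\pi$. Injectivity follows because if $\pi'=\delta_\lambda(\pi)$ and $|\pi|=|\pi'|=1$, then $1=\lambda\cdot 1$, so $\lambda=1$ and $\pi'=\pi$ (this also re-confirms freeness of the $\R^*_+$-action on $\widehat{G}\setminus\{0\}$).

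Continuity of $\iota$ is immediate, as it is the restriction of the quotient projection $p\colon \widehat{G}\setminus\{0\}\to(\widehat{G}\setminus\{0\})/\R^*_+$. For continuity of $\iota^{-1}$, I would introduce the normalization map
$$\phi\colon \widehat{G}\setminus\{0\}\longrightarrow \{\pi\in\widehat{G} : |\pi|=1\},\qquad \phi(\pi) = \delta_{|\pi|^{-1}}(\pi).$$
This is continuous, using the continuity of $|\cdot|$ on $\widehat{G}$, of inversion on $\R^*_+$, and of the dilation action $\R^*_+ \act \widehat{G}$. Moreover $\phi$ is $\R^*_+$-invariant: for any $\mu>0$,
$$\phi(\delta_\mu(\pi)) = \delta_{|\delta_\mu(\pi)|^{-1}}(\delta_\mu(\pi)) = \delta_{\mu^{-1}|\pi|^{-1}}(\delta_\mu(\pi)) = \delta_{|\pi|^{-1}}(\pi) = \phi(\pi).$$
By the universal property of the quotient topology, $\phi$ descends to a continuous map $\overline{\phi}\colon (\widehat{G}\setminus\{0\})/\R^*_+ \to \{\pi\in\widehat{G} : |\pi|=1\}$, which by the bijectivity step coincides with $\iota^{-1}$.

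No serious obstacle is anticipated: every ingredient (continuity of the induced quasi-norm on $\widehat{G}$, its compatibility with the dilation action, and the universal property of the quotient topology) is already in place from the preceding discussion. The mild subtlety is simply that the total quotient $(\widehat{G}\setminus\{0\})/\R^*_+$ need not be Hausdorff globally (only the pieces $\Lambda_i/\R^*_+$ are, by Corollary \ref{Stratification}); but the argument above does not use any separation property of the quotient, only the universal property of its topology.
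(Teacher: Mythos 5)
The paper states this lemma without proof (it follows the description of the induced quasi-norm on $\widehat{G}$ and is presented as an immediate consequence), so there is nothing to compare against. Your argument is correct and is the natural way to fill the gap: bijectivity from homogeneity and the fact that $|\cdot|$ vanishes only at the trivial representation, continuity from the quotient projection, and continuity of the inverse from descent of the $\R^*_+$-invariant normalization $\phi(\pi)=\delta_{|\pi|^{-1}}(\pi)$. The only point you take silently for granted is joint continuity of the dilation action $\R^*_+\times\widehat{G}\to\widehat{G}$ (the spectrum carries the hull-kernel topology, so this is not entirely formal); it does hold here, since Kirillov's homeomorphism transports the question to $\g^*/\Ad^*(G)$ with the quotient topology, where the $\R^*_+$-action is linear on $\g^*$, and $\mathrm{id}_{\R^*_+}\times q$ is a quotient map because $\R^*_+$ is locally compact. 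Your closing remark that no separation hypothesis on the quotient is used is also correct and worth keeping.
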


\section{Osculating group bundle of a filtered manifold}

Calculus on graded nilpotent Lie groups appears naturally when one studies operators on filtered manifolds. In this section we describe the basic geometric features of such manifolds.

\begin{definition}A filtered manifold is a manifold $M$ equipped with a filtration of its tangent bundle by subbundles $\{0\} = H^0 \subset H^1 \subset \cdots \subset H^r = TM$ with the following condition on the Lie brackets:
$$\forall i,j, \left[\Gamma(H^i),\Gamma(H^j)\right] \subset \Gamma(H^{i+j}) $$
with $H^k = TM$ for $k \geq r$.
\end{definition}

Given a filtered manifold $(M,H)$ we can associate to it a bundle of graded nilpotent Lie groups called the osculating group bundle. It replaces the tangent bundle and captures its filtered structure. We start by defining the associated bundle of Lie algebras. Denote by $\ttt_HM = H^1 \oplus \faktor{H^2}{H^1} \oplus \cdots \oplus \faktor{H^r}{H^{r-1}}$. We endow it with a Lie algebroid structure with trivial anchor and the following bracket: if $X \in \Gamma(H^i), Y \in \Gamma(H^j)$ we have $[X,Y] \in H^{i+j}$ and its class $[X,Y]\mod \Gamma(H^{i+j-1})$ only depends on the class of $X\mod \Gamma(H^{i-1})$ and the class of $Y\mod \Gamma(H^{j-1})$. We thus have defined a Lie bracket
$$[\cdot,\cdot] \colon \Gamma(\ttt_HM) \wedge \Gamma(\ttt_HM) \to \Gamma(\ttt_HM).$$
This Lie Bracket is $\CCC^{\infty}(M)$-bilinear, indeed let $f \in \CCC^{\infty}(M), X \in \Gamma(H^i), Y \in \Gamma(H^j)$, we have
$$[fX,Y] = f[X,Y] - \diff f(Y)X \equiv f[X,Y] \mod \Gamma(H^{i+j-1}).$$
This implies that the algebroid structure on $\ttt_HM$ is from a smooth family of Lie algebras. These algebras are graded by construction. 

\begin{definition}Let $(M,H)$ be a filtered manifold. We denote by $T_HM$ the osculating groupoid. It is the bundle of (connected, simply connected) Lie groups integrating $\ttt_HM$ obtained by the Baker-Campbell-Hausdorff formula.\end{definition}

The term \begin{it}osculating\end{it} goes back to \cite{FollandSteinEstimate} with the Heisenberg group arising from a CR-structure. This notion has since then been extended to arbitrary filtered manifolds, see e.g. \cite{van_erp_2017}.

\begin{example}For the trivial filtration $H^1 = TM$ we get $T_HM = TM$ as a bundle of abelian groups (locally trivial). \end{example}

\begin{example}Let $M$ be a contact manifold with contact structure $H \subset TM$. Then on each fiber $T_HM$ has the structure of a Heisenberg group, moreover this bundle is locally trivial (it is trivialized in Darboux coordinates).\end{example}

\begin{example}
    If \(G\) is a semi-simple Lie group, \(P \subset G\) a parabolic subgroup and \(M\) a manifold with \((G,P)\)-geometry then \(M\) has a natural structure of a filtered manifold for which the osculating groupoid is a locally trivial bundle of groups \(N_-\) where \(N_-\) is the nilpotent group opposed to \(P\), see \cite{dave2017graded}.
\end{example}

Despite the apparent simplicity of these examples, the general case can be more complicated : the group structure might change abruptly between fibers and even if all the fibers are isomorphic there might be no analog of the Darboux theorem.
We can encompass these kind of bundles and graded Lie groups by dealing with arbitrary smooth families of graded Lie groups over smooth manifolds (non necessarily locally trivial bundles). These are Lie groupoids \(\mathcal{G} \rr M\) with identical source and range maps and with connected and simply connected fibers. Their algebroid \(\Lie(\G) \to M\) is a graded vector bundle and the Lie algebra structures on the fibers are graded for this grading.

Despite the absence of local theory for such bundles, our strategy will be to reduce our study to graded Lie groups. Group bundles are an example of Lie groupoids. Associated to them there is thus a $C^*$-algebra. Because the fibers of the bundle are nilpotent Lie groups the bundles are amenable groupoids so we will not make any difference between the maximal and reduce $C^*$-algebras. The end of this section is devoted to the description of this kind of algebra in the light of our strategy.

\begin{example}If $\mathcal{G} = G_0 \times M$ is a trivial Lie group bundle over $M$ then $C^*(\mathcal{G}) = C^*(G_0) \otimes \CCC_0(M)$.\end{example} 

\begin{proposition}\label{ContField}Let $\mathcal{G} \to M$ be a smooth family of graded Lie groups. The algebra $C^*(\mathcal{G})$ is a continuous field of $C^*$-algebras over $M$, the fiber at $x \in M$ is equal to the group $C^*$-algebra $C^*(\mathcal{G}_x)$.\end{proposition}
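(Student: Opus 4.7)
The plan is to give $C^*(\mathcal{G})$ a $C_0(M)$-algebra structure with the correct fibers, then strengthen upper to lower semicontinuity via Kirillov theory for families. Because source and range of the group bundle both coincide with the bundle projection $\pi\colon\mathcal{G}\to M$, pullback produces a central nondegenerate $*$-homomorphism $\pi^*\colon C_0(M)\to Z\mathcal{M}(C^*(\mathcal{G}))$, acting on the dense subalgebra $C_c(\mathcal{G})$ by pointwise multiplication $(\phi,f)\mapsto(\phi\circ\pi)f$. This makes $C^*(\mathcal{G})$ a $C_0(M)$-algebra with fiber $C^*(\mathcal{G})/I_x$ at $x$, where $I_x=\overline{\pi^*(C_0(M\setminus\{x\}))\cdot C^*(\mathcal{G})}$.

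To identify this fiber with $C^*(\mathcal{G}_x)$, I use the fiberwise restriction $r_x\colon C_c(\mathcal{G})\to C_c(\mathcal{G}_x)$, $f\mapsto f|_{\mathcal{G}_x}$: it is a convolution $*$-algebra morphism annihilating $I_x\cap C_c(\mathcal{G})$, so it descends to a surjection $\rho_x\colon C^*(\mathcal{G})/I_x\twoheadrightarrow C^*(\mathcal{G}_x)$. Conversely, any unitary representation $\pi$ of $\mathcal{G}_x$ composes with $r_x$ to a $*$-representation of $C_c(\mathcal{G})$ bounded by the universal norm and vanishing on $I_x$, hence extends to $C^*(\mathcal{G})/I_x$; summing over a separating family of $\pi$'s produces a faithful representation of $C^*(\mathcal{G})/I_x$ which factors through $\rho_x$, so $\rho_x$ is isometric.

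With the $C_0(M)$-algebra structure in place, upper semicontinuity of $x\mapsto\|a_x\|$ is automatic. For lower semicontinuity, which is what promotes the structure to a continuous field, it suffices to treat $a=f\in C_c(\mathcal{G})$. Given $x_0\in M$ and $\epsilon>0$, pick an irreducible representation $\pi_{x_0}$ of $\mathcal{G}_{x_0}$ with $\|\pi_{x_0}(f|_{\mathcal{G}_{x_0}})\|\geq\|f|_{\mathcal{G}_{x_0}}\|-\epsilon$. The family version of Pedersen's stratification developed in Section~\ref{Section:Stratification for Family}, combined with the Lipsman--Rosenberg realization (irreducible representations in each stratum acting on a fixed $L^2(\mathbb{R}^n)$ with polynomial-coefficient differential operators depending rationally on the orbit parameter), furnishes a continuous family $\pi_x$ of representations of $\mathcal{G}_x$ for $x$ in a neighborhood of $x_0$. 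Continuity of $x\mapsto\pi_x(f|_{\mathcal{G}_x})$ in operator norm at $x_0$ then yields $\liminf_{x\to x_0}\|f|_{\mathcal{G}_x}\|\geq\|f|_{\mathcal{G}_{x_0}}\|-\epsilon$, giving lower semicontinuity.

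The hard part is this last step: without any local triviality assumption on $\mathcal{G}\to M$, one cannot reduce to a tensor-product decomposition, and the continuous extension of irreducible representations across fibers genuinely requires the family stratification together with the rational dependence of the representation data on the orbit parameter.
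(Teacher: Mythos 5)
Your proposal is a completely different (and far more elaborate) route than the paper's proof, which is a one-line appeal to the Landsman--Ramazan theory: a smooth family of amenable Lie groupoids yields a continuous field of groupoid $C^*$-algebras, with upper semicontinuity from the universal property of the full algebra and lower semicontinuity from the continuity of the regular representations. Within your outline, the $C_0(M)$-algebra structure and the identification $C^*(\mathcal{G})/I_x \cong C^*(\mathcal{G}_x)$ are standard (though your injectivity argument is mildly circular: you need that every representation of $C^*(\mathcal{G})$ killing $I_x$ factors through $\rho_x$, which follows from Renault's disintegration, not from summing over a separating family of $\pi$'s --- asserting that the resulting representation of $C^*(\mathcal{G})/I_x$ is faithful is exactly the claim to be proved). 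The genuine gap, however, is in the lower-semicontinuity step.

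You claim that the family Pedersen stratification together with the Lipsman--Rosenberg rational realization furnishes a continuous family of irreducibles $\pi_x$ through a near-maximizing $\pi_{x_0}$ over a neighbourhood of $x_0$. The Pedersen stratum containing $\pi_{x_0}$ need not extend over a neighbourhood of $x_0$ in $M$; the paper's example following Corollary~\ref{SubquotientsCompact General family} shows this directly. For the bundle $\mathcal{G} = \mathbb{R}^3\times\{0\}\sqcup H_3\times(0,1]$, a character of $\mathbb{R}^3$ with nonzero central dual coordinate sits in the top Pedersen stratum $\Omega_2$ over $t=0$, while $\Omega_{2|t>0}$ contains only characters with $z=0$; the representations of $H_3$ that Fell-approximate this character are the infinite-dimensional Schr\"odinger representations living in the \emph{other} stratum $\Omega_1$. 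No continuous family of irreducibles passes through such a $\pi_{x_0}$. There is also a circularity: the family stratification is developed in Section~\ref{Section:Stratification for Family}, which already relies on Corollary~\ref{ContFieldCstarZero} and hence on the present proposition. The simple and correct argument bypasses irreducibles entirely: amenability of the fibres gives $\|f_x\| = \|\lambda_x(f_x)\|$, and $x\mapsto\|\lambda_x(f_x)\|$ is a supremum of the continuous functions $x\mapsto|\langle\lambda_x(f_x)\xi_x,\eta_x\rangle|$ over continuous sections $\xi,\eta$ of the continuous field of Hilbert spaces $L^2(\mathcal{G}_x)$, hence lower semicontinuous --- precisely the Landsman--Ramazan argument the paper invokes.
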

\begin{proof}
This is an easy consequence of the fact that $G$ is a smooth (hence continuous) family of group(oid)s over $M$ in the sense of \cite{LandRam}.
\end{proof}

\begin{definition}
    If \(\G \to M\) is a smooth family of graded groups we define \(C^*_0(\G)\) to be the ideal in \(C^*(\G)\) corresponding to the open subset \(\widehat{\G}\setminus M\) where \(M\) is seen as the zero section in \(\Lie(\G)^*\).
\end{definition}

\begin{corollary}\label{ContFieldCstarZero}
    If \(\G \to M\) is a smooth family of graded groups then $C^*_0(\mathcal{G})$ is a continuous field of $C^*$-algebras over $M$, the fiber at $x \in M$ is equal to the $C^*$-algebra $C^*_0(\mathcal{G}_x)$.
\end{corollary}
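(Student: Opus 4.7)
The plan is to present $C^*_0(\G)$ as the kernel of a canonical $C_0(M)$-linear surjection onto $C_0(M)$ built from the fiberwise trivial representations, and then to read off the continuous field structure by applying the fiber functor to the resulting short exact sequence. Concretely, for each $x \in M$ the trivial representation of the (nilpotent, hence amenable) group $\G_x$ extends to a character $\epsilon_x\colon C^*(\G_x) \to \CC$; composing with the evaluation $\ev_x$ of Proposition~\ref{ContField} gives characters $\tau_x = \epsilon_x\circ \ev_x$ of $C^*(\G)$, which I assemble into a $*$-homomorphism $\tau\colon C^*(\G) \to C_0(M)$, $a\mapsto (x\mapsto \tau_x(a))$. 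On $a \in C_c^\infty(\G)$ this is fiberwise integration $\tau(a)(x) = \int_{\G_x} a(g)\diff g$, which lies in $C_c^\infty(M)$; density and contractivity then put the image of $\tau$ in $C_0(M)$ and show it surjects onto it. By definition of the ideal associated with the open subset $\widehat{\G}\setminus M$, one has $\ker\tau = \bigcap_{\pi \in M}\ker\pi = C^*_0(\G)$, so
\begin{equation*}
0 \longrightarrow C^*_0(\G) \longrightarrow C^*(\G) \xrightarrow{\tau} C_0(M) \longrightarrow 0
\end{equation*}
is a short exact sequence of $C_0(M)$-algebras.

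Proposition~\ref{ContField} identifies the fiber of the middle term at $x$ with $C^*(\G_x)$, that of the right-hand term with $\CC$, and the induced map between them with $\epsilon_x$. A standard argument combining a bounded approximate unit of the maximal ideal $I_x \subset C_0(M)$ with Cohen--Hewitt factorisation shows that for any $C_0(M)$-ideal $J$ of a $C_0(M)$-algebra $A$ one has $J\cap I_x A = I_x J$ (write $a \in J\cap I_x A$ as $a = fb$ with $f\in I_x$, $b\in A$ by Cohen--Hewitt, then approximate $a$ by $e_\lambda a \in I_x J$). Consequently the fiber functor preserves exactness of our short exact sequence and yields
\begin{equation*}
0 \longrightarrow C^*_0(\G)_x \longrightarrow C^*(\G_x) \xrightarrow{\epsilon_x} \CC \longrightarrow 0,
\end{equation*}
which identifies $C^*_0(\G)_x$ with $\ker\epsilon_x = C^*_0(\G_x)$.

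Finally, continuity of the resulting field structure on $C^*_0(\G)$ is automatic: the norm function $x \mapsto \|\ev_x(a)\|$ for $a \in C^*_0(\G)$ is the restriction of the continuous norm function on $C^*(\G)$ provided by Proposition~\ref{ContField}. The only nontrivial ingredient — what I expect to be the main technical obstacle — is the identity $J \cap I_x A = I_x J$ underlying injectivity of the fiber functor on the kernel term; once it is in hand, everything else reduces to routine verifications of $C_0(M)$-linearity and density.
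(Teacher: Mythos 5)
Your proof is correct. The paper offers no explicit argument for this corollary --- it is stated as an immediate consequence of Proposition~\ref{ContField} and the definition of $C^*_0(\G)$ as the ideal attached to $\widehat{\G}\setminus M$ --- and your route (realising $C^*_0(\G)$ as $\ker\tau$ for the fiberwise-integration character $\tau\colon C^*(\G)\to C_0(M)$, then invoking exactness of the fiber functor via $J\cap I_xA = I_xJ$) is exactly the standard way to supply the implicit details; the Cohen--Hewitt step you flag is indeed the only point requiring any care.
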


\section{Stratification of coadjoint orbits for a family of graded groups}\label{Section:Stratification for Family}

In this Section we extend Pedersen's stratification to general smooth families of graded groups (in particular, osculating groupoids). This allows us to get a similar result as in Corollary \ref{SubquotientsCompact} for the \(C^*\)-algebra of a smooth family of graded groups (i.e. solvability). It is however not crucial in the proofs later on since they can be reduced to the case of a single group using Corollary \ref{ContFieldCstarZero} and similar result for the algebra of principal symbols (see Section \ref{Section:Symbols}).

Let us first recall the construction of Pedersen's strata in the case of a group. Consider \(G\) a connected, simply connected nilpotent group, and \(\g\) its Lie algebra. Let \(X_1,\cdots, X_m\) be a Jordan-Hölder basis of \(\g\). Given an element \(\xi \in \g^*\) and a subalgebra \(\mathfrak{h}\subset \g\) we define:
\[\mathfrak{h}(\xi) = \left\lbrace X \in \mathfrak{h}, \forall Y \in \mathfrak{h}, \lag \xi, [X,Y]\rag = 0\right\rbrace.\]
Consider \(\g_1\triangleleft \cdots \triangleleft \g_m = \g\) the nested sequence of ideals defined by the Jordan-Hölder basis, i.e. \(\g_j = \operatorname{Span}\left(X_{n-j+1},\cdots,X_n\right), j\geq 0\).

\begin{definition}
    The jump indices of an element \(\xi \in \g^*\) are the sets \(J^k_{\xi}, 1\leq k \leq m\) where:
    \[J^k_{\xi} = \left\lbrace 1\leq j \leq k, X_j \notin \g_{j-1} + \g_k(\xi) \right\rbrace \in \mathcal{E},\]
    and \(\mathcal{E}\) is the powerset of \(\{1,\cdots,m\}\). We write 
    \[\mathcal{J}_{\xi} := (J^1_{\xi},\cdots,J^m_{\xi}) \in \mathcal{E}^m.\]
\end{definition}

Since for a given ideal \(\mathfrak{h}\subset \g\), \(\xi \in \g^*\) and \(g\in G\) we have \(\mathfrak{h}(\xi) = \mathfrak{h}(\Ad^*(g)\xi)\), then the jump indices are identical along a given co-adjoint orbit.

If \(\varepsilon \in \mathcal{E}^m, \Omega_{\varepsilon} := \{\xi \in \g^*, \mathcal{J}_{\xi} = \varepsilon\}\) is a stratum of Pedersen's stratification. The ordering on strata corresponds to the ordering on \(\mathcal{E}^m\) given by lexicographic order and inclusion, see \cite{PedersenQuant}.

The stratification obviously extends to families of graded groups that are trivial bundles, i.e. \(\mathcal{G} = G_0 \times M \to M\). We can also extend the stratification to locally trivial bundles of graded groups using the following lemma:

\begin{lemma}
    Let \(\g\) be a nilpotent group endowed with a Jordan-Hölder basis \((X_1,\cdots,X_m)\) and \(\varphi \colon \g \to \g\) be a Lie algebra automorphism. Then the basis \((\varphi(X_1),\cdots,\varphi(X_m))\) is also a Jordan-Hölder basis. If we write \(\tilde{\mathcal{J}}\) for the jump indices corresponding to the second basis then:
    \[\forall \xi \in \g^*, \mathcal{J}_{\tsp\varphi(\xi)} = \tilde{\mathcal{J}}_{\xi}.\]
    In particular, \(\tsp\varphi\) sends the Pedersen strata induced by the second basis to the strata of the one induced by the first basis (preserving the stratification ordering).
\end{lemma}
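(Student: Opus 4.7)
The plan is to unwind the definitions and use that a Lie algebra automorphism intertwines the relevant ingredients (ideals, the Lie bracket, and the coadjoint pairing through its transpose).

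First I would check that $(\varphi(X_1),\ldots,\varphi(X_m))$ is indeed a Jordan-Hölder basis. Being a Jordan-Hölder basis means exactly that the subspaces $\g_j$ spanned by the appropriate sub-tuple are ideals of $\g$. Since $\varphi$ is a Lie algebra automorphism, it sends ideals to ideals and the associated filtration for the new basis is $\tilde{\g}_j = \varphi(\g_j)$, which remains a composition series of ideals. This takes care of the first assertion.

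The heart of the proof is the computation of $\tilde{\g}_k(\xi)$ in terms of $\g_k(\tsp\varphi(\xi))$. Writing any $X \in \tilde{\g}_k$ as $X = \varphi(X')$ with $X' \in \g_k$, the automorphism property of $\varphi$ and the definition of the transpose give
\[\lag \xi, [\varphi(X'),\varphi(Y')]\rag = \lag \xi, \varphi([X',Y'])\rag = \lag \tsp\varphi(\xi), [X',Y']\rag,\]
for every $Y' \in \g_k$. Hence $X \in \tilde{\g}_k(\xi)$ if and only if $X' \in \g_k(\tsp\varphi(\xi))$, so
\[\tilde{\g}_k(\xi) = \varphi\bigl(\g_k(\tsp\varphi(\xi))\bigr).\]

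With this identity in hand, I would finish by translating the jump-index condition. Using $\tilde{\g}_{j-1} = \varphi(\g_{j-1})$ and the identity above,
\[\varphi(X_j) \notin \tilde{\g}_{j-1} + \tilde{\g}_k(\xi) = \varphi\bigl(\g_{j-1} + \g_k(\tsp\varphi(\xi))\bigr),\]
and since $\varphi$ is bijective this is equivalent to $X_j \notin \g_{j-1} + \g_k(\tsp\varphi(\xi))$. Thus $j \in \tilde{J}^k_{\xi}$ if and only if $j \in J^k_{\tsp\varphi(\xi)}$, proving the equality $\mathcal{J}_{\tsp\varphi(\xi)} = \tilde{\mathcal{J}}_{\xi}$. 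The last assertion about Pedersen strata is then immediate: since each stratum is exactly a level set of the jump-index map and the ordering on $\mathcal{E}^m$ is intrinsic to the tuple (lexicographic order and inclusion), $\tsp\varphi$ sends the stratum indexed by $\varepsilon$ in the new basis to the stratum indexed by the same $\varepsilon$ in the original one, preserving the ordering. No real obstacle arises; the only care needed is bookkeeping of the ``tilde'' versus untilded objects and of the direction in which $\varphi$ and $\tsp\varphi$ act.
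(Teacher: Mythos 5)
Your proof is correct and follows essentially the same route as the paper's: both hinge on the identity $\varphi\bigl(\g_k(\tsp\varphi(\xi))\bigr) = \tilde{\g}_k(\xi)$ together with $\varphi(\g_j)=\tilde{\g}_j$, and then read off the equality of jump indices. The only difference is that you spell out the pairing computation that the paper leaves implicit; the substance is identical.
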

\begin{proof}
    It is clear that the second basis is also a Jordan-Hölder basis. Write \(\tilde{\g}_1\triangleleft \cdots \triangleleft \tilde{\g}_m = \g\) for the sequence of nested ideals corresponding to the second basis. Then \(\varphi(\g_j) = \tilde{\g}_j\) for all \(j\). Moreover if \(\xi \in \g^*\) and \(1\leq k \leq m\), we have \(\varphi\left(\g_k(\tsp\varphi(\xi))\right) = \tilde{\g}_k(\xi)\) so we get the result on jump indices.
\end{proof}

\begin{remmark}
    Given two arbitrary Jordan-Hölder basis for a Lie algebra \(\g\), the linear automorphism sending one to the other might fail to preserve the Lie algebra structure. In particular, the previous result doesn't imply that there is only one possible Pedersen stratification. However, if one fixes a Jordan-Hölder basis, then we can transport it with any Lie algebra automorphism and the strata in Pedersen's stratification will be preserved (even though the automorphism doesn't preserve the flag structure induced by the Jordan-Hölder basis).
\end{remmark}

\begin{corollary}\label{Stratification Loc Triv}
    Let \(\mathcal{G} \to M\) be a locally trivial bundle of graded groups with group structure \(G\). There exists a filtration \(\emptyset = \mathcal{V}_0 \subset \mathcal{V}_1 \subset \cdots \subset \mathcal{V}_d = \hat{\mathcal{G}}\setminus M\) by open subsets such that each \(\Omega_j = \mathcal{V}_j \setminus \mathcal{V}_{j-1}\) is locally compact Hausdorff, \(\R^*_+\)-invariant and the action \(\R^*_+ \act \Omega_j\) is free and proper. 
    
    The sets \(\mathcal{V}_j\) and \(\Omega_j\) are fiber bundles over \(M\) with respective fiber \(V_j\) and \(\Lambda_j\) which are taken from Corollary \ref{Stratification} applied to \(G\).
\end{corollary}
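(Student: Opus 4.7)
The plan is to build the filtration locally in a trivializing atlas of $\mathcal{G}$, show the local pieces glue, and then derive the claimed properties from the single-fiber case of Corollary \ref{Stratification}. First I would fix, once and for all, a Jordan-Hölder basis $(X_1,\ldots,X_m)$ of $\g := \Lie(G)$ consisting of homogeneous elements for the grading, and apply Corollary \ref{Stratification} to obtain the filtration $\emptyset = V_0 \subset V_1 \subset \cdots \subset V_d = \widehat{G}\setminus\{1\}$ together with the strata $\Lambda_j = V_j\setminus V_{j-1}$. Next, choose a locally trivializing atlas $(U_\alpha)$ for $\mathcal{G}$ with transition cocycle $\varphi_{\alpha\beta}\colon U_\alpha \cap U_\beta \to \Aut(G)$ valued in graded Lie group automorphisms. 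Over each $U_\alpha$, the trivialization identifies $\widehat{\mathcal{G}|_{U_\alpha}}\setminus U_\alpha$ with $U_\alpha \times (\widehat{G}\setminus\{1\})$, and I set $\mathcal{V}_j|_{U_\alpha} := U_\alpha \times V_j$ and $\Omega_j|_{U_\alpha} := U_\alpha \times \Lambda_j$.

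The crucial step, and the main obstacle, is checking the compatibility of these local definitions on overlaps $U_\alpha \cap U_\beta$. The two trivializations of $\widehat{\mathcal{G}}\setminus M$ over such an overlap differ by the action on $\widehat{G}\setminus\{1\}$ induced, via Kirillov's homeomorphism, by $\tsp\mathrm{d}\varphi_{\alpha\beta}(x)$ on $\g^*\setminus\{0\}$ (note that this dual map intertwines the coadjoint action with itself up to an inner automorphism and hence descends to $\widehat{G}$). Applying the preceding lemma to each such automorphism, the dual sends the Pedersen strata attached to the transported basis $(\mathrm{d}\varphi_{\alpha\beta}(x)(X_i))$ to those attached to $(X_i)$, preserving the ordering. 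Combined with the observation from the remark following it that these two bases in fact induce the same Pedersen stratification of $\g^*$, this yields $\tsp\mathrm{d}\varphi_{\alpha\beta}(x)(V_j) = V_j$, hence compatibility of the local definitions. The global $\mathcal{V}_j$ and $\Omega_j$ thus obtained are, by construction, fiber bundles over $M$ with fibers $V_j$ and $\Lambda_j$ respectively.

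The remaining properties follow locally. Each $\Omega_j$ is, in any chart, diffeomorphic to $U_\alpha \times \Lambda_j$, hence locally compact Hausdorff. The $\R^*_+$-action on $\widehat{\mathcal{G}}$ preserves the projection to $M$ and restricts on each fiber to the dilation action on $\widehat{G}\setminus\{1\}$, so $\Omega_j$ is $\R^*_+$-invariant and the action is free. For properness it is enough to verify that the action map $\R^*_+ \times \Omega_j \to \Omega_j \times_M \Omega_j$ is proper, a local statement which in each chart reduces to the fiberwise action map $\R^*_+ \times \Lambda_j \to \Lambda_j \times \Lambda_j$ already known to be proper by Corollary \ref{Stratification}; composing with the closed inclusion $\Omega_j \times_M \Omega_j \hookrightarrow \Omega_j \times \Omega_j$ gives the claim. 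The difficulty is concentrated entirely in the overlap-gluing step: once the lemma and remark above have been invoked to establish invariance of the Pedersen filtration under arbitrary graded Lie automorphisms of $G$, the rest is a routine local-to-global translation of Corollary \ref{Stratification}.
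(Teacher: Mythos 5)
Your argument is essentially the paper's proof, spelled out in detail: the paper disposes of this corollary in two lines (``fix a Jordan-Hölder basis, define the $\mathcal{V}_j$ locally, and glue using the previous lemma''), and you carry out exactly that plan — local definition in a trivializing atlas, overlap-compatibility via the lemma on automorphism-transported bases, and a routine chartwise check of local compactness, Hausdorffness, freeness, and properness. One helpful clarification you add, which the paper's terse proof does not spell out, is that the lemma by itself only gives $\tsp\varphi(\tilde V_j)=V_j$ and one also needs the content of the following remark (that transporting a fixed Jordan-Hölder basis by a Lie algebra automorphism yields the same strata) to conclude $\tsp\varphi(V_j)=V_j$; this is the right reading of what the paper is implicitly invoking.
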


\begin{proof}
    We fix a a Jordan-Hölder basis for the Lie algebra of \(G\). We can them define the spaces \(\mathcal{V}_j\) locally and glue them back using the previous lemma.
\end{proof}

\begin{corollary}\label{SubquotientsCompact Loc Triv}
    In the same conditions as in the previous Corollary and With the same notations, the \(C^*\)-algebra \(C^*_0(\mathcal{G})\) decomposes into a sequence of nested ideals: 
    \[\{0\} = J_0 \triangleleft J_1 \triangleleft \cdots \triangleleft J_d = C^*_0(\mathcal{G}),
    \]
    with \(\widehat{J_k} = \mathcal{V}_k\).
    Then for each \(1\leq k \leq d\) we have 
    \[\faktor{J_k}{J_{k-1}} \cong \CCC_0(\Omega_k,\mathcal{K}_k).\]
    Here \(\mathcal{K}_i\) denotes the algebra of compact operators on a separable Hilbert space (of dimension \(1\) for \(k = d\) and infinite dimensional otherwise).
\end{corollary}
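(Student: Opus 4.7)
The plan is to construct the ideals $J_k$ from the spectral filtration of Corollary \ref{Stratification Loc Triv} and then to identify the subquotients by reducing to the single-group case (Corollary \ref{SubquotientsCompact}) via the local triviality of $\mathcal{G}\to M$. First, since each $\mathcal{V}_k$ is an open subset of $\widehat{C^*_0(\mathcal{G})} = \widehat{\mathcal{G}}\setminus M$, I set
\[J_k := \bigcap_{\pi \in \widehat{C^*_0(\mathcal{G})}\setminus \mathcal{V}_k}\ker(\pi) \triangleleft C^*_0(\mathcal{G}).\]
The inclusions $\mathcal{V}_0\subset \mathcal{V}_1 \subset \cdots$ yield the nested chain $J_0\triangleleft J_1\triangleleft\cdots\triangleleft J_d = C^*_0(\mathcal{G})$, and by construction $\widehat{J_k} = \mathcal{V}_k$ and $\widehat{J_k/J_{k-1}} = \Omega_k$, which is Hausdorff by Corollary \ref{Stratification Loc Triv}.

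Since $\Omega_k$ is Hausdorff, Nilsen's theorem (as cited just before Corollary \ref{SubquotientsCompact}) implies that $J_k/J_{k-1}$ is a continuous field of $C^*$-algebras over $\Omega_k$. The fiber over $\omega\in\Omega_k$ lying above $x\in M$ is the primitive quotient corresponding to an irreducible representation of $\mathcal{G}_x\cong G$, which by Corollary \ref{SubquotientsCompact} applied to the fiber group is isomorphic to $\mathcal{K}_k$. Hence $J_k/J_{k-1}$ is a continuous field of copies of $\mathcal{K}_k$ over $\Omega_k$, and it remains to establish that this field is globally trivial.

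For triviality, the idea is to cover $M$ by open sets $U$ over which $\mathcal{G}|_U\cong G\times U$. Over such a chart $C^*_0(\mathcal{G}|_U)\cong C^*_0(G)\otimes \CCC_0(U)$, and the family ideals decompose as $J_k|_U \cong J_k(G)\otimes \CCC_0(U)$, where the $J_k(G)$ are the ideals from Corollary \ref{SubquotientsCompact}. This yields
\[(J_k/J_{k-1})|_U \cong \CCC_0(\Lambda_k,\mathcal{K}_k)\otimes \CCC_0(U) \cong \CCC_0(\Omega_k|_U,\mathcal{K}_k),\]
so the field is locally trivial over $\Omega_k$. On overlaps, the transition cocycles come from Lie algebra bundle automorphisms, which by the lemma preceding Corollary \ref{Stratification Loc Triv} preserve the Pedersen strata once a Jordan-Hölder basis has been fixed on the typical fiber $G$. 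The rational dependence of the Lipsman-Rosenberg realizations on a fixed $L^2(\R^{n_k})$ then supplies unitary intertwiners between the two local trivializations varying continuously in the parameters, and these assemble the local isomorphisms into the desired global one.

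The hard part is this last gluing: a continuous $\mathcal{K}_k$-bundle over $\Omega_k$ is classified by a Dixmier-Douady class in $H^3(\Omega_k,\Z)$, so triviality is not formal. One must genuinely exploit the canonical nature of the Lipsman-Rosenberg construction on $L^2(\R^{n_k})$, together with the preservation of strata under Lie algebra automorphisms, to lift the projective transition cocycle to a continuous unitary cocycle. This is the step where the explicit rational structure of Pedersen's stratification is essential, and it is also the reason one cannot hope for an analogous statement for a general smooth family of graded groups without a local triviality assumption.
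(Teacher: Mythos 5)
Your overall strategy matches the paper's own proof: define the $J_k$ from the spectral filtration, observe via Nilsen that each subquotient is a continuous field of compact operator algebras over the Hausdorff stratum $\Omega_k$, identify the fibers using Corollary \ref{SubquotientsCompact} in local trivializations of $\mathcal{G}$, and glue. You are right to flag that the gluing is the non-formal step: a continuous field of $\mathcal{K}$-algebras is classified by a Dixmier-Douady class in $H^3(\Omega_k,\Z)$, so local triviality of the field does not by itself give global triviality. The paper's one-line proof (``apply the results of Corollary \ref{SubquotientsCompact} locally and glue them using Corollary \ref{Stratification Loc Triv}'') glosses over exactly the point you raise, and your appeal to the rational Lipsman--Rosenberg realizations and the strata-preservation lemma is the right ingredient for lifting the projective transition cocycle to a continuous unitary cocycle. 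However, your proposal stops short of actually constructing the unitary intertwiners, so the ``hard part'' you identify is acknowledged rather than discharged; as written this is an outline, not a complete argument for triviality.

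Your closing claim is wrong, though. You assert that ``one cannot hope for an analogous statement for a general smooth family of graded groups without a local triviality assumption,'' but the paper proves precisely such a statement in Corollary \ref{SubquotientsCompact General family}. The mechanism is Proposition \ref{Universal bundle}: any smooth family $\mathcal{G}\to M$ admits a submersion from a locally trivial bundle $\mathcal{F}\to M$ of universal graded groups, so $C^*_0(\mathcal{G})$ is a quotient of $C^*_0(\mathcal{F})$. Solvability and the identification of subquotients are then inherited from $\mathcal{F}$ via Proposition \ref{Strata Universal Nilp}, which tracks how $\tsp\varphi$ maps Pedersen strata to Pedersen strata. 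So local triviality is not an essential hypothesis for the decomposition; it is merely the scaffolding through which the general case is reached.
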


\begin{proof}
    The ideals \(J_k\) are still continuous fields of \(C^*\)-algebras over \(M\) so we can apply the results of Corollary \ref{SubquotientsCompact} locally and glue them using Corollary \ref{Stratification Loc Triv}.
\end{proof}

\begin{example}
    Let \((M,H)\) be a contact manifold and consider its osculating groupoid \(T_HM\). The groupoid \(T_HM\) is a locally trivial bundle of Heisenberg groups. We fix a Jordan-Hölder basis for the Heisenberg Lie algebra by taking any basis of \(\R^{2n}\) and then a basis element of the center. Then \(\Omega_1 \cong \left(\faktor{TM}{H}\right)^*\setminus M\) and \(\Omega_2 = H^*\setminus M\).
\end{example}

We finally would like to extend these results to an arbitrary family of graded groups. Let \(\mathcal{G} \to M\) be a smooth family of graded groups. Despite the group structure on the fibers not being locally trivial, the family \(\Lie(\G) \to M\) is still a vector bundle.

\begin{proposition}\label{Universal bundle}
    There exists a locally trivial bundle of graded groups \(\mathcal{F} \to M\) and a smooth submersion \(\Phi \colon \F \to \G\) which is a bundle map, preserves the graded group structure on the fibers and is onto.
\end{proposition}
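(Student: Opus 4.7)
The plan is to construct $\F$ fiberwise from the free graded nilpotent Lie algebra attached to $\Lie(\G)$, and to produce $\Phi$ by integrating the universal morphism at the Lie algebra level. First I would recall that for any graded vector space $V = \bigoplus_{j=1}^r V_j$ there is a \emph{free graded nilpotent Lie algebra of step $r$} on $V$, call it $\f(V)$, characterized by the universal property that every degree-preserving linear map $V \to W$ into a graded nilpotent Lie algebra $W$ of step at most $r$ extends uniquely to a graded Lie algebra morphism $\f(V) \to W$. Concretely $\f(V)$ can be realized as the graded subspace of iterated commutators inside the truncated tensor algebra $T(V)/T^{>r}(V)$, so up to graded Lie algebra isomorphism it depends only on the dimensions $(\dim V_j)_{j}$, and the construction is functorial with respect to degree-preserving linear isomorphisms of $V$.

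Next I would promote this to a bundle construction. Set $\f_x := \f(\Lie(\G)_x)$ for each $x \in M$. Since $\Lie(\G) = \bigoplus_j \g_j$ is a graded vector bundle, hence locally trivializable in a grading-preserving way, and since $V\mapsto \f(V)$ is functorial in graded isomorphisms, $\f \to M$ inherits the structure of a locally trivial bundle of graded nilpotent Lie algebras. Applying the universal property fiberwise to the identity map $\Lie(\G)_x \to \Lie(\G)_x$ (regarded as a degree-preserving linear map from a graded vector space into a graded nilpotent Lie algebra of step at most $r$) yields a graded Lie algebra morphism $\pi_x \colon \f_x \to \Lie(\G)_x$ which is surjective because its image contains the generating subspace $\Lie(\G)_x$. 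The explicit tensor-algebra description shows that $\pi_x$ is a fixed polynomial expression in the Lie bracket $[\cdot,\cdot]_x$, so the smooth dependence of the brackets on $x$ gives smoothness of $\pi \colon \f \to \Lie(\G)$ as a bundle map.

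Finally I would integrate to the group level. Since $\f$ is a locally trivial bundle of connected simply connected nilpotent Lie algebras, the BCH formula produces a locally trivial bundle $\F \to M$ of connected simply connected nilpotent Lie groups with $\Lie(\F) = \f$; the grading of $\f$ induces a fiberwise family of dilations making $\F$ a locally trivial bundle of \emph{graded} groups. The Lie algebra morphism $\pi$ then exponentiates to a bundle morphism $\Phi \colon \F \to \G$ which is a graded group homomorphism on each fiber and commutes with dilations. Because the exponential maps on both sides are diffeomorphisms and $\pi_x$ is surjective, $\Phi$ is fiberwise onto and is, together with smoothness, a submersion.

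The main obstacle is in verifying that $\f$ is genuinely a locally trivial bundle of Lie algebras with a smoothly varying bracket and that the universal morphism $\pi$ is smooth. The resolution rests on the rigidity of $\f(V)$: up to graded Lie algebra isomorphism it depends only on the (locally constant) sequence $(\dim \g_j)_j$, so over a trivializing chart of $\Lie(\G)$ the bundle $\f$ becomes $U \times \f(V)$, and all the nontrivial variation is pushed into the smooth family of brackets $[\cdot,\cdot]_x$ on $V$ pulled back from $\G$. This sidesteps entirely the possible absence of any local triviality for the group structure of $\G$ itself, which is the only genuine difficulty in the general case beyond Corollary \ref{Stratification Loc Triv}.
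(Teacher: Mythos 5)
Your proof is correct and follows essentially the same strategy as the paper: build the universal (free) graded nilpotent Lie algebra of the same depth over the graded vector bundle $\Lie(\G)$, integrate it to a locally trivial group bundle, and exponentiate the canonical surjection from generators. The only cosmetic difference is that you obtain local triviality of $\F$ via functoriality of $V\mapsto \f(V)$ in graded linear isomorphisms, whereas the paper trivializes with a local Jordan--H\"older frame and glues using the induced automorphisms of $\f$; these are two phrasings of the same observation.
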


\begin{proof}
    Locally we trivialize the graded vector bundle \(\Lie(\G)\) over a subset \(U\subset M\) and get a basis of each component of the grading. If we take them in ascending order, this gives a local frame \(X_1,\cdots,X_m\) which is a Jordan-Hölder basis on each fiber. Let \(\mathfrak{f}\) be the universal graded Lie algebra with same depth as \(Lie(\G)\) and generators corresponding to the \(X_i\)'s in the same degree. This defines a map \(U\times \mathfrak{f} \to \Lie(\G)_{|U}\). We can lift this map to a map \(\Phi_U\colon U \times F \to \G_{|U}\) where \(F\) is the connected simply connected group integrating \(\mathfrak{f}\). If we change the local frame this induces a automorphism of \(\mathfrak{f}\) on each fiber. Therefore this local construction can be glued back to a locally trivial bundle of graded groups \(\F \to M\) with fiber \(F\). The previous map then globalizes to \(\Phi \colon \F \to \G\) and has the desired properties. 
\end{proof}

We now also consider this locally trivial bundle \(\F \to M\). We can apply Corollaries \ref{Stratification Loc Triv} and \ref{SubquotientsCompact Loc Triv} to this bundle. Notice that since \(C^*_0(\G)\) is a quotient of \(C^*_0(\F)\), then the former is also solvable by taking the quotients of the nested ideals of \(C^*_0(\F)\). The locally compact spaces appearing in the subquotients are closed subsets of the \(\Omega_j\)'s corresponding to \(\F\). In order to identify them, we need to understand what the map \(\tsp\diff\Phi \colon \Lie(\G)^* \to \Lie(\F)^*\) does to the Pedersen strata. This is described in the following proposition. We state this result for single groups for simplicity but since we have local Jordan-Hölder basis for \(\Lie(\G)\), this clearly generalizes to the case at hand.

\begin{proposition}\label{Strata Universal Nilp}
    Let \(G\) be a graded Lie group with Lie algebra \(\g\) endowed with a Jordan-Hölder basis  \(X_1,\cdots,X_m\) (coming from the grading). Let \(\mathfrak{f}\) be the universal graded Lie algebra with same depth as \(\g\) and generated by the \(X_j\)'s in the same degree. Let \(\varphi \colon \mathfrak{f} \to \g\) be the Lie algebra homomorphism sending \(X_j\) to \(X_j\) for each \(1\leq j \leq m\). There exists a Jordan-Hölder basis \(Z_1, \cdots Z_M\) for \(\mathfrak{f}\) for which the elements are either equal to some \(X_j\) (and they all appear once, in order) or to an element of \(\ker(\varphi)\).

    Write \(u \colon \{1,\cdots,m\} \to \{1,\cdots,M\}\) for the map defined by \(Z_{u(j)} = X_j, 1\leq k \leq m\). For \(1\leq k \leq M\), let \(1\leq \bar{k}\leq m\) be the biggest index for which \(u(\bar{k}) \leq k\). Then:
    \[\forall \xi \in \g^*, \forall 1\leq k\leq M, J^k_{\tsp\varphi(\xi)} = u(J_{\xi}^{\bar{k}}).\]

    In particular \(\tsp{\varphi}\) sends Pedersen strata to Pedersen strata injectively and preserving their ordering. In other words, the pre-image by \(\tsp\varphi\) of a Pedersen stratum in \(\mathfrak{f}^*\) is either empty or a Pedersen stratum of \(\mathfrak{g}^*\).
\end{proposition}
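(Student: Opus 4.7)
The plan is to build the Jordan–Hölder basis of \(\mathfrak{f}\) compatibly with the \(X_j\)'s, translate the jump-index condition on \(\mathfrak{f}^*\) back to one on \(\g^*\) via a direct case analysis, and then read off the statement on Pedersen strata.

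First I would construct \(Z_1,\ldots,Z_M\). Both \(\mathfrak{f}\) and \(\g\) are graded and \(\varphi\) is a graded homomorphism, so \(\ker(\varphi)\) is a graded ideal and in each degree \(d\) the restriction \(\varphi^{(d)}\colon \mathfrak{f}^{(d)} \to \g^{(d)}\) is surjective. The \(X_j\)'s of degree \(d\), viewed as the degree-\(d\) generators of \(\mathfrak{f}\), form a linear complement to \(\ker(\varphi)\cap\mathfrak{f}^{(d)}\). In each degree I pick an arbitrary basis of \(\ker(\varphi)\cap\mathfrak{f}^{(d)}\) and concatenate it with the \(X_j\)'s of degree \(d\) taken in the order they appear in the basis of \(\g\); listing the degrees in the order for which \((X_1,\ldots,X_m)\) is a Jordan–Hölder basis of \(\g\) then yields the claimed basis. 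That this is a Jordan–Hölder basis of \(\mathfrak{f}\) follows because in a graded nilpotent Lie algebra the subspaces obtained from upper sets of the grading are always ideals, and brackets of two elements of a single \(\mathfrak{f}^{(d)}\) land in strictly higher degrees, so within a single graded slice any ordering of the basis elements still produces an ideal flag.

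Second, writing \(\mathfrak{f}_k\) and \(\g_j\) for the Jordan–Hölder flags of \(\mathfrak{f}\) and \(\g\) induced by these bases, I would record two identities. By definition of \(\bar{k}\), the elements of \(\{Z_1,\ldots,Z_k\}\) that do not lie in \(\ker(\varphi)\) are exactly \(X_1,\ldots,X_{\bar{k}}\), hence \(\varphi(\mathfrak{f}_k)=\g_{\bar{k}}\); unfolding \(\mathfrak{f}_k(\tsp\varphi(\xi))=\{X\in\mathfrak{f}_k : \langle\xi,\varphi[X,\mathfrak{f}_k]\rangle = 0\}\) then gives \(\varphi\bigl(\mathfrak{f}_k(\tsp\varphi(\xi))\bigr)=\g_{\bar{k}}(\xi)\) together with \(\ker(\varphi)\cap\mathfrak{f}_k \subset \mathfrak{f}_k(\tsp\varphi(\xi))\). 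I would then compute \(J^k_{\tsp\varphi(\xi)}\) by case analysis on \(Z_l\), \(1\leq l\leq k\): if \(Z_l\in\ker(\varphi)\), then \(Z_l\in \ker(\varphi)\cap\mathfrak{f}_k \subset \mathfrak{f}_k(\tsp\varphi(\xi))\subset \mathfrak{f}_{l-1}+\mathfrak{f}_k(\tsp\varphi(\xi))\), so \(l\) is not a jump index; if \(Z_l=X_j\) (so \(l=u(j)\) with \(j\leq\bar{k}\)), applying \(\varphi\) shows that \(Z_l\in \mathfrak{f}_{l-1}+\mathfrak{f}_k(\tsp\varphi(\xi))\) forces \(X_j\in \g_{j-1}+\g_{\bar{k}}(\xi)\), and conversely any such decomposition in \(\g\) lifts back to \(\mathfrak{f}\) with the residual \(\ker(\varphi)\cap\mathfrak{f}_k\) discrepancy absorbed into \(\mathfrak{f}_k(\tsp\varphi(\xi))\). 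Hence \(l\in J^k_{\tsp\varphi(\xi)}\) iff \(l=u(j)\) with \(j\in J^{\bar{k}}_\xi\), i.e.\ \(J^k_{\tsp\varphi(\xi)} = u(J^{\bar{k}}_\xi)\).

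The stratum statement is then immediate: the formula expresses \(\mathcal{J}_{\tsp\varphi(\xi)}\) as a function of \(\mathcal{J}_\xi\) through the injection \(u\) and the weakly increasing map \(k\mapsto \bar{k}\); this assignment is itself injective with ordering preserved (the lexicographic plus inclusion order on tuples of subsets is transported entry-wise by \(u\)), so distinct Pedersen strata of \(\g^*\) map to distinct strata of \(\mathfrak{f}^*\). The main subtle point throughout is the lifting in the case \(Z_l=X_j\), which hinges crucially on \(\ker(\varphi)\cap\mathfrak{f}_k\subset \mathfrak{f}_k(\tsp\varphi(\xi))\) to absorb kernel discrepancies when pulling back equalities from \(\g\) to \(\mathfrak{f}\).
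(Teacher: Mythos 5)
Your proof is correct and follows essentially the same route as the paper: construct a Jordan--Hölder basis of \(\mathfrak{f}\) degree by degree from the \(X_j\)'s and a basis of \(\ker\varphi\), compute \(\varphi(\mathfrak{f}_k)=\g_{\bar{k}}\) and \(\mathfrak{f}_k(\tsp\varphi\xi)=\varphi^{-1}(\g_{\bar{k}}(\xi))\cap\mathfrak{f}_k\), then split into the two cases \(Z_\ell\in\ker\varphi\) and \(Z_\ell=X_j\), absorbing the kernel discrepancy into \(\mathfrak{f}_k(\tsp\varphi\xi)\). One small caution on the basis construction: within each degree the \(X_j\)'s should precede the chosen basis of \(\ker\varphi\cap\mathfrak{f}^{(d)}\) (as in the paper), not follow it; otherwise, when the top degree block of \(\mathfrak{f}\) contains kernel elements, the indices \(\bar{k}\) for small \(k\) would fall outside the range \(1\leq\bar{k}\leq m\) required by the statement, and you would need to extend the conventions to \(\bar{k}=0\), \(J^0_\xi=\emptyset\) to keep the formula meaningful.
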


\begin{proof}
    We consider the \(X_j\)'s as elements of both \(\f\) and \(\g\). Then we can complete the Jordan-Hölder basis of \(\f\) the following way: after all the \(X_j\)'s of some fixed degree \(k\) we concatenate a basis elements of degree \(k\) in the kernel of the map \(\varphi\colon \mathfrak{f} \to \g\). Since \(\mathfrak{f}\) is a graded Lie algebra, this is a Jordan-Hölder basis. We denote this Jordan-Hölder basis \(Z_1,\cdots,Z_M\).

    Let \(\xi \in \g^*\) and \(1\leq k \leq M\). We have \(\varphi(\f_k) = \g_{\bar{k}}\) so a quick computation yields 
    \[\f_k(\tsp\varphi(\xi)) = \varphi^{-1}(\g_{\bar{k}})\cap \f_k.\]
    Let \(1\leq \ell \leq k\): 
    \begin{itemize}
        \item if \(\ell \notin \im(u)\) then \(Z_{\ell}\in \ker(\varphi)\cap \f_k \subset \f_k(\tsp\varphi(\xi))\) so \(\ell \notin J^k_{\tsp\varphi(\xi)}\).
        \item if \(\ell = u(\bar{\ell})\) then \(Z_{\ell} = X_{\bar{\ell}} \in \f_k(\tsp\varphi(\xi)) + \f_{\ell-1}\) if and only if \(X_{\bar{\ell}} \in \g_k(\xi) + \g_{\overline{\ell-1}}\). But since \(\ell = u(\bar{\ell})\) then \(\overline{\ell-1} = \bar{\ell}-1\) and we get the result.\qedhere
    \end{itemize}
\end{proof}

\begin{corollary}\label{Stratification General family}
    Let \(\G \to M\) be a smooth family of graded groups. 
    There exists a filtration \(\emptyset = \mathcal{V}_0 \subset \mathcal{V}_1 \subset \cdots \subset \mathcal{V}_r = \hat{\mathcal{G}}\setminus M\) by open subsets such that each \(\Omega_j = \mathcal{V}_j \setminus \mathcal{V}_{j-1}\) is locally compact Hausdorff, \(\R^*_+\)-invariant and the action \(\R^*_+ \act \Omega_j\) is free and proper. The sets \(\mathcal{V}_j\) and \(\Omega_j\) are fibered over \(M\). The fibers over a given point \(x\in M\) form a Pedersen stratification of \(\widehat{\G_x}\) (some \(\Omega_{j|x}\) might be empty).
\end{corollary}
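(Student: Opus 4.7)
The strategy is to push the stratification of the locally trivial bundle $\F$ down to $\G$ via the covering map $\Phi\colon \F \to \G$ from Proposition \ref{Universal bundle}. Since $\Phi$ is a fiberwise surjective group homomorphism, its Lie algebra differential $\diff\Phi$ is fiberwise surjective and hence the transpose $\tsp\diff\Phi \colon \Lie(\G)^* \to \Lie(\F)^*$ is a fiberwise injective bundle map whose image consists of the covectors annihilating $\ker \diff\Phi$. Through the Kirillov homeomorphism between the unitary dual and the space of coadjoint orbits, this identifies $\widehat{\G}\setminus M$ with a closed $\R^*_+$-invariant subset of $\widehat{\F}\setminus M$ (namely the representations of $\F$ factoring through $\G$).

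I would then apply Corollary \ref{Stratification Loc Triv} to the locally trivial bundle $\F$ to obtain a filtration $\emptyset = \widetilde{\mathcal{V}}_0 \subset \cdots \subset \widetilde{\mathcal{V}}_r = \widehat{\F}\setminus M$ with strata $\widetilde{\Omega}_j$, and set
\[\mathcal{V}_j := \widetilde{\mathcal{V}}_j \cap (\widehat{\G}\setminus M), \qquad \Omega_j := \widetilde{\Omega}_j \cap (\widehat{\G}\setminus M).\]
By construction the $\mathcal{V}_j$ form an increasing sequence of open subsets of $\widehat{\G}\setminus M$ with $\mathcal{V}_r = \widehat{\G}\setminus M$. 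To identify the fibers, fix $x \in M$ and pass to a local chart simultaneously trivializing $\F$ and the graded bundle $\Lie(\G)$: fiberwise, the map $\Phi_x\colon F \to \G_x$ is then exactly of the form considered in Proposition \ref{Strata Universal Nilp}, since $F$ is the universal graded group on the chosen frame of $\Lie(\G_x)$. That Proposition gives that the $\tsp\diff\Phi_x$-preimage of each Pedersen stratum of $\Lie(F)^*$ is either empty or a single Pedersen stratum of $\Lie(\G_x)^*$, in an order-preserving fashion, so the fiberwise restriction of $(\mathcal{V}_j)$ is precisely the Pedersen filtration of $\widehat{\G_x}\setminus\{x\}$ (with some possibly empty levels).

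The remaining properties transfer along the closed embedding $\widehat{\G}\setminus M \hookrightarrow \widehat{\F}\setminus M$: each $\Omega_j$ is locally compact Hausdorff as a closed subspace of the locally compact Hausdorff space $\widetilde{\Omega}_j$; it is $\R^*_+$-invariant because both $\widetilde{\Omega}_j$ and $\widehat{\G}\setminus M$ are; and the $\R^*_+$-action on $\Omega_j$ is free and proper as the restriction of such an action to a closed invariant subspace. The main subtlety, and the reason the preceding machinery is needed, is coherence across charts: different local frames of $\Lie(\G)$ produce different universal models $F$, and one must know that the induced stratification on $\widehat{\G_x}\setminus\{x\}$ is intrinsic. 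This is guaranteed by combining Proposition \ref{Strata Universal Nilp} with the invariance of Pedersen strata under Lie algebra automorphisms established in the lemma preceding Corollary \ref{Stratification Loc Triv}, which together ensure that the local pieces glue to a well-defined global filtration of $\widehat{\G}\setminus M$.
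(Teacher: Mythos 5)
Your proposal is correct and follows essentially the same route as the paper: pass to the universal locally trivial bundle $\F$ from Proposition \ref{Universal bundle}, apply Corollary \ref{Stratification Loc Triv} to $\F$, pull the resulting stratification back to $\widehat{\G}\setminus M$ (viewed as a closed $\R^*_+$-invariant subset of $\widehat{\F}\setminus M$ via $\tsp\diff\Phi$), and use Proposition \ref{Strata Universal Nilp} to identify the fibers as Pedersen strata. You supply more detail than the paper on why the topological and dynamical properties transfer to the closed invariant subspace and on coherence across charts, but the underlying argument is the paper's.
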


\begin{proof}
    Let \(\F \to M\) be the locally trivial bundle of graded groups with fiber \(F\), universal graded group of same depth as any fiber of \(\G\). Let \(\Phi \colon \F \to \G\) be the submersion of Proposition \ref{Universal bundle}. Let us fix a Jordan-Hölder basis for the typical fiber of \(\Lie(F)\) as in Proposition \ref{Strata Universal Nilp}. Then we pullback the stratification of \(\widehat{\F}\setminus M\) given by Corollary \ref{Stratification Loc Triv} to \(\widehat{\G}\setminus M\). The fibers are then identified over each point with a Pedersen stratification for \(\widehat{\G_x}\setminus \{0\}\) using Proposition \ref{Strata Universal Nilp}.
\end{proof}

\begin{corollary}\label{SubquotientsCompact General family}
    Let \(\G \to M\) be a smooth family of graded groups and consider a filtration of \(\hat{\G}\setminus M\) as in the previous Corollary. The \(C^*\)-algebra \(C^*_0(\mathcal{G})\) decomposes into a sequence of nested ideals: 
    \[\{0\} = J_0 \triangleleft J_1 \triangleleft \cdots \triangleleft J_d = C^*_0(\mathcal{G}),
    \]
    with \(\widehat{J_k} = \mathcal{V}_k\).
    Then for each \(1\leq k \leq d\) we have 
    \[\faktor{J_k}{J_{k-1}} \cong \CCC_0(\Omega_k,\mathcal{K}_k).\]
    Here \(\mathcal{K}_k\) denotes the algebra of compact operators on a separable Hilbert space (of dimension \(1\) for \(k = d\) and infinite dimensional otherwise).
\end{corollary}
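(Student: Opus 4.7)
The plan is to reduce the general case to the locally trivial case (Corollary \ref{SubquotientsCompact Loc Triv}) by pushing the decomposition along the surjection $\Phi\colon\F\to\G$ of Proposition \ref{Universal bundle}. First, $\Phi$ induces a surjective $*$-homomorphism $\Phi_*\colon C^*_0(\F)\twoheadrightarrow C^*_0(\G)$ (restriction of $\Phi_*\colon C^*(\F)\twoheadrightarrow C^*(\G)$, which exists because $\Phi$ is a fiberwise surjective bundle morphism). Dually, pullback of representations yields an embedding $\iota\colon\widehat{\G}\setminus M\hookrightarrow\widehat{\F}\setminus M$ that identifies $\widehat{\G}\setminus M$ with a closed subset of the spectrum of $C^*_0(\F)$.

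Next, apply Corollary \ref{SubquotientsCompact Loc Triv} to the locally trivial bundle $\F$: we obtain a nested sequence of ideals $\{0\}=\tilde{J}_0\triangleleft\cdots\triangleleft\tilde{J}_d=C^*_0(\F)$ with $\widehat{\tilde{J}_k}=\tilde{\mathcal{V}}_k$ and subquotients $\tilde{J}_k/\tilde{J}_{k-1}\cong\CCC_0(\tilde{\Omega}_k,\tilde{\mathcal{K}}_k)$, where $\tilde{\mathcal{V}}_k\subset\widehat{\F}\setminus M$ is the stratification coming from a local Jordan--Hölder frame on $\Lie(\F)$. By construction, the stratification $\mathcal{V}_k$ of Corollary \ref{Stratification General family} is precisely $\iota^{-1}(\tilde{\mathcal{V}}_k)$, since Proposition \ref{Strata Universal Nilp} (applied fiberwise) tells us that $\tsp{\diff\Phi}$ sends each Pedersen stratum of $\Lie(\G)^*$ into a single Pedersen stratum of $\Lie(\F)^*$. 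Define $J_k:=\Phi_*(\tilde{J}_k)$; the standard correspondence between ideals and open subsets of the spectrum for a Type~I $C^*$-algebra gives $\widehat{J_k}=\mathcal{V}_k$, which is the desired stratification of $C^*_0(\G)$.

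Finally, for the subquotients, the surjection $\tilde{J}_k\twoheadrightarrow J_k$ descends to a surjection
\[
\CCC_0(\tilde{\Omega}_k,\tilde{\mathcal{K}}_k)\;\cong\;\tilde{J}_k/\tilde{J}_{k-1}\;\twoheadrightarrow\;J_k/J_{k-1},
\]
whose kernel corresponds to the open subset $\tilde{\Omega}_k\setminus\iota(\Omega_k)\subset\tilde{\Omega}_k$. Hence the quotient is $\CCC_0(\iota(\Omega_k),\tilde{\mathcal{K}}_k)\cong\CCC_0(\Omega_k,\mathcal{K}_k)$, where we identify $\iota(\Omega_k)$ with $\Omega_k$ as topological spaces and observe $\mathcal{K}_k\cong\tilde{\mathcal{K}}_k$ because an irreducible representation $\pi$ of $\G_x$ and its pullback $\pi\circ\Phi_x$ to $\F_x$ act on the very same Hilbert space, so their Lipsman--Rosenberg models have equal dimension.

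The main obstacle lies in the third step: verifying that the kernel of the induced surjection on subquotients is exactly $\CCC_0(\tilde{\Omega}_k\setminus\iota(\Omega_k),\tilde{\mathcal{K}}_k)$, i.e. that the spectrum-level identifications provided by Proposition \ref{Strata Universal Nilp} commute with the successive quotient operations. Once this compatibility is established, the matching of Hilbert space dimensions (and hence of $\mathcal{K}_k$ with $\tilde{\mathcal{K}}_k$ on $\iota(\Omega_k)$) is automatic from the pullback construction.
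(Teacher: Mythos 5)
Your proposal is correct and follows essentially the same strategy as the paper's (very terse) proof: pass through the surjection $C^*_0(\F)\twoheadrightarrow C^*_0(\G)$ furnished by Proposition \ref{Universal bundle}, invoke Corollary \ref{SubquotientsCompact Loc Triv} for the locally trivial bundle $\F$, transport the ideal filtration, and identify the subquotient spectra via Proposition \ref{Strata Universal Nilp}. The ``main obstacle'' you flag at the end is in fact routine and not a genuine gap: once you know that $\Phi_*$ carries $\tilde J_j$ onto $J_j$ for each $j$, the induced surjection $\tilde J_k/\tilde J_{k-1}\twoheadrightarrow J_k/J_{k-1}$ has, on primitive ideal spaces, exactly the inclusion $\iota(\Omega_k)\hookrightarrow\tilde\Omega_k$ (by functoriality of the spectrum under quotients), and since $\iota(\Omega_k)=\tilde\Omega_k\cap\iota(\widehat\G\setminus M)$ is closed in $\tilde\Omega_k$, the kernel is the $C^*$-ideal of $\CCC_0(\tilde\Omega_k,\tilde{\mathcal K}_k)$ with spectrum the open complement $\tilde\Omega_k\setminus\iota(\Omega_k)$, namely $\CCC_0(\tilde\Omega_k\setminus\iota(\Omega_k),\tilde{\mathcal K}_k)$; the quotient is then $\CCC_0(\iota(\Omega_k),\tilde{\mathcal K}_k)$ because restriction of a trivial field onto a closed subset is surjective. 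Your observation that $\mathcal K_k\cong\tilde{\mathcal K}_k$ is also correct and is exactly the right remark: $\pi$ and $\pi\circ\Phi_x$ are the same operators on the same Hilbert space, so they are simultaneously $1$-dimensional or infinite-dimensional.
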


\begin{proof}
    With the same notations as before, \(C^*_0(\G)\) is a quotient of \(C^*_0(\F)\) which is solvable. The algebra \(C^*_0(\G)\) is then solvable as well and we have identified the locally compact spaces giving the spectra of subquotients using Proposition \ref{Strata Universal Nilp}.
\end{proof}

\begin{remmark}
    If for a given \(1\leq k \leq d\) and \(x\in M\) the set \(\Omega_{k|x}\) is empty, this means that an element \(f\in \faktor{J_k}{J_{k-1}}\) seen as a function on \(\Omega_k\) vanishes near the \(x\)-fiber in the following sense:
    \[\lim_{p_k(\lambda) \to x} f(\lambda) = 0,\]
    with \(p_k\colon \Omega_k \to M\) being the fiber map in Proposition \ref{Stratification General family}.
\end{remmark}

The next example showcases this vanishing aspect:

\begin{example}
    Let \(\G = \R^3\times \{0\} \sqcup H_3 \times (0;1] \to [0;1]\). We can see it as integrating the bundle \(\R^3 \times [0;1] \to [0;1]\) where the Lie bracket on the \(t\)-fiber is equal to \(t\) times the one of the Heisenberg Lie algebra (so we indeed get an abelian Lie algebra for \(t = 0\)).
    If we take a basis \(X,Y,Z\) of the Lie algebra then we get \(\mathfrak{f} = \lag X,Y,[X,Y],Z\rag \cong \mathfrak{h}_3\oplus \R\). Then we get \(\Omega_1 = \R^* \times (0;1]\) and \(\Omega_2 = \R^3 \sqcup \R^2 \times (0;1] = \{(x,y,z,t) \in \R^3 \times [0;1], zt = 0\}.\)
    In particular we get \(\Omega_{1|t=0} = \emptyset\). This makes sense because \(\G_0\) is abelian so there can only be one Pedersen stratum and it has to be the top one.
\end{example}

\section{Symbols in the filtered calculus}\label{Section:Symbols}

Let $\mathcal{G}$ be a Lie groupoid. Denote by 
\begin{align*}
    \hd &= \Lambda^{1/2}(\ker(\diff s))\otimes \Lambda^{1/2}(\ker(\diff r)) \\
        &\cong r^*\Lambda^{1/2}(\Lie(\G)) \otimes s^*\Lambda^{1/2}(\Lie(\G))
\end{align*}
the bundle of half densities on $\mathcal{G}$. Write $\CCC^{\infty}_c(\mathcal{G}, \hd)$ for the algebra of sections with the convolution product $f\ast g (\gamma) = \int_{\gamma_1\gamma_2 = \gamma} f(\gamma_1)g(\gamma_2)$. Equivalently, one can also replace half-densities by a choice of a Haar system for $\mathcal{G}$ which would correspond to a trivialization of $\Lambda^{1/2}(\Lie(\G))$, both yield the same convolution algebra for $\mathcal{G}$.

For distributions: $\mathcal{D}'(\mathcal{G}, \hd)$ (resp. $\mathcal{E}'(\mathcal{G}, \hd)$) will denote the topological dual of $\CCC^{\infty}(\mathcal{G},\Omega^{-1/2} \otimes \Omega^1_{\mathcal{G}})$ (resp. $\CCC^{\infty}(\mathcal{G},\Omega^{-1/2} \otimes \Omega^1_{\mathcal{G}})$) where $\Omega^1_{\mathcal{G}}$ is the bundle of 1-densities obtained from $T\mathcal{G}$. This choice is made so that $\CCC^{\infty}(\mathcal{G},\hd) \hookrightarrow \mathcal{D}'(\mathcal{G}, \hd)$ (resp. $\CCC^{\infty}_c(\mathcal{G},\hd) \hookrightarrow \mathcal{E}'(\mathcal{G}, \hd)$).

Let us recall the definition of symbols in the context of filtered calculus. In the following, $\pi \colon \G \to M$ denotes a smooth family of graded Lie groups. The family of inhomogeneous dilations $(\delta_{\lambda})_{\lambda > 0}$ defines by pullback a family of algebra automorphisms $(\delta_{\lambda}^*)_{\lambda > 0}$ of $\CCC^{\infty}_c(\G, \hd)$. Note that this is the pullback of half-densities which takes the jacobian into account. If a choice of Haar system is made, the corresponding action on $\CCC^{\infty}_c(\G)$ is given by $(\lambda^n\delta_{\lambda}^*)_{\lambda > 0}$ (the pullback of functions will not preserve the convolution product). Here $n$ denotes the homogeneous dimension of $G$, i.e. $n = \sum_{i\geq 1} i \rk(\g_i)$.

\begin{definition}A symbol of order $m \in \CC$ in $\G$ is a distribution $u \in \mathcal{D}'(\G, \hd)$ satisfying
\begin{itemize}
\item $u$ is properly supported, i.e. $\pi \colon \supp(u) \to M$ is a proper map.
\item $u$ is transversal to $\pi$ (in the sense of \cite{androulidakis2009holonomy}, also called \(\pi\)-fibered in \cite{van_erpyunken}), i.e. $\pi_*(u) \in \CCC^{\infty}(M)$.
\item $\forall \lambda \in \R^*_+, \delta_{\lambda *}u -\lambda^m u \in \CCC^{\infty}_p(\G)$.
\end{itemize}
$S_p^m(\G)$ denotes the set of these distributions and $S^*_p(\G) = \bigcup_{m \in \Z}S_p^m(\G)$.
The subspace of compactly supported symbols will be denoted by $S_c^m(\G)$. We will write $S^m(\G)$ when the statements apply for both $S^m_p(\G)$ and $S^m_c(\G)$.
\end{definition}

The second condition implies that $u$ corresponds to a smooth family of distributions $u_x \in \mathcal{D}'(\G_x,\hd)$ (see \cite{lescuremanchonvassout} for a more precise statement). The first condition then implies that the support of each $u_x$ is a compact subset of the fiber $G_x$ for every $x \in M$, i.e. $u_x \in \mathcal{E}'(\G_x,\hd)$. The third condition implies that $u$ is smooth outside of the zero unit section $M \subset \G$, it also gives the asymptotic behavior of $u$ near $M$.

The symbols used here are called quasi-homogeneous. They correspond to principal symbols of pseudodifferential operators in the filtered calculus up to $\CCC^{\infty}_p(\G)$ functions. For compactly supported symbols, the quasi-homogeneity condition can equivalently be stated $\mod \CCC^{\infty}_c(\G)$.

The push-forward of distributions is obtained by duality from the pullback of half-densities. To understand what it does on functions seen as distributions first note that $\Omega^1_{\G} \otimes \Omega^{-1/2} \cong (r^* \otimes s^*)(\Omega_M^{1/2})$. On elements $f \in \CCC^{\infty}(\mathcal{G},\Omega^{-1/2} \otimes \Omega^1_{\mathcal{G}})$ seen as elements of $\mathcal{D}'(\mathcal{G}, \hd)$ we then have:
$$\delta_{\lambda *}f = \delta_{\lambda^{-1}}^*f.$$

\begin{remmark} As we use properly supported distributions we do the same for functions: $\CCC^{\infty}_p(\cdot)$ denotes the space of properly supported functions on a groupoid (if $G$ is a groupoid, $X\subset G$ is proper if $s_{|X}$ and $r_{|X}$ are proper maps). With these notations we see that $\CCC^{\infty}_p(G,\hd)$ embeds into the space of properly supported fibered distributions on $G$ (an arbitrary Lie groupoid).
\end{remmark}

We recall some useful results on filtered calculus, for more details and proofs we refer to \cite{van_erpyunken, mohsen2020index, dave2017graded} and their references:

\begin{lemma}Let $u\in S^{m_1}(\G), v\in S^{m_2}(\G)$ then $u\ast v \in S^{m_1+m_2}(\G)$ and $u^* \in S^{m_1}(\G)$. This result can be extended to $m_1 = -\infty$ or $m_2 = -\infty$ with $S^{-\infty}_{c/p}(\G) = \CCC^{\infty}_{c/p}(\G,\hd)$.\end{lemma}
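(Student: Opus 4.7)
The plan is to verify the three defining conditions of a symbol for $u\ast v$ and for $u^*$ in turn, relying crucially on a regularization lemma: convolution of a compactly supported fibered distribution with a smooth properly supported function is smooth. I will first isolate this lemma, then use it to handle the quasi-homogeneity condition, which is the only nontrivial part.

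First I would establish the key regularization statement: if $u \in S^m(\G)$ and $g \in \CCC^{\infty}_p(\G, \hd)$, then $u\ast g$ and $g\ast u$ lie in $\CCC^{\infty}_p(\G, \hd)$. Proper support is routine, since the support of the convolution lies in $\supp(u)\cdot\supp(g)$, which is proper as the product of two proper subsets in the group bundle $\G$. For smoothness, I use that both $u$ and $g$ restrict to smooth families of compactly supported objects on the fibers $\G_x$ (the transversality of $u$ to $\pi$ provides the family $u_x \in \mathcal{E}'(\G_x, \hd)$, and proper support of $g$ makes $g_x$ compactly supported on each fiber). Then $(u\ast g)_x = u_x\ast g_x$ is the convolution on a single nilpotent Lie group of a compactly supported distribution with a smooth function, hence smooth; smoothness in $x$ follows from the smoothness of both families combined with the fact that $\pi$ is a submersion so the pushforward formalism of \cite{androulidakis2009holonomy, lescuremanchonvassout} applies.

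For the product $u\ast v$, proper support is immediate as above, and transversality follows from the fiberwise description combined with smoothness of the family $(u\ast v)_x = u_x \ast v_x$, since each factor is a fibered distribution. For quasi-homogeneity, I use that $\delta_\lambda$ is a groupoid automorphism of $\G$ (being a fiberwise group automorphism), so
\begin{align*}
\delta_{\lambda*}(u\ast v) - \lambda^{m_1+m_2}\, u\ast v
 &= (\delta_{\lambda*}u)\ast(\delta_{\lambda*}v) - \lambda^{m_1+m_2}\, u\ast v \\
 &= (\delta_{\lambda*}u - \lambda^{m_1} u)\ast \delta_{\lambda*}v + \lambda^{m_1} u \ast (\delta_{\lambda*}v - \lambda^{m_2} v).
\end{align*}
Each term is of the form (symbol) $\ast$ (smooth properly supported) or (smooth properly supported) $\ast$ (symbol), so by the regularization lemma, both terms lie in $\CCC^{\infty}_p(\G,\hd)$, giving quasi-homogeneity of order $m_1+m_2$.

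For the involution $u^*$, proper support and transversality are transported along the inversion map of $\G$ (a diffeomorphism preserving the bundle structure). Since $\delta_\lambda$ is a group automorphism, it commutes with inversion, hence $\delta_{\lambda*}(u^*) = (\delta_{\lambda*}u)^*$, and one obtains $\delta_{\lambda*}(u^*) - \lambda^{m_1} u^* = (\delta_{\lambda*}u - \lambda^{m_1}u)^* \in \CCC^{\infty}_p(\G,\hd)$ (using the convention that $\lambda^m$ is determined by $\lambda>0$; if one prefers the standard complex-conjugate convention on orders this only changes $m_1$ to $\overline{m_1}$ in the computation). Finally, the extension to $m_i = -\infty$ is an immediate consequence of the regularization lemma, since the whole content of $S^{-\infty}$ is precisely $\CCC^{\infty}_{c/p}(\G,\hd)$, and convolving a smooth properly supported function with a symbol of any order returns a smooth properly supported function.

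The main obstacle is the regularization lemma, and specifically the smoothness in the base variable $x$. On each single fiber the statement reduces to a standard fact about compactly supported distributions on a Lie group, but to get joint smoothness one has to use that $u$ and $g$ are honest fibered distributions transverse to $\pi$, and that this transversality is preserved by the fiberwise convolution operation. Once this is in place, the rest of the proof is essentially formal manipulation with the fact that $\delta_\lambda$ and inversion are groupoid automorphisms.
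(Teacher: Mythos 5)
The paper does not prove this lemma itself but defers to \cite{van_erpyunken, mohsen2020index, dave2017graded}; your argument correctly reconstructs the standard proof in those references, isolating the regularization fact (symbol $\ast$ smooth properly supported $\in \CCC^{\infty}_p$, via the fibered-distribution formalism) and then telescoping the quasi-homogeneity defect using that $\delta_\lambda$ is a convolution-algebra automorphism commuting with inversion. The only caveat, which you flag correctly in the parenthetical, is that for complex $m_1$ the adjoint lands in $S^{\overline{m_1}}(\G)$ rather than $S^{m_1}(\G)$; this is immaterial for the integer orders appearing in $S^*_p(\G)=\bigcup_{m\in\Z}S^m_p(\G)$, which is the context in which the lemma is used.
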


Let $u \in S^*(\G)$, for each $x \in M$ we get a convolution operator 
$$\begin{matrix}
\op(u_x) & \colon & \CCC^{\infty}_c(\G_x,\hd) & \longrightarrow     & \CCC^{\infty}_c(\G_x,\hd) \\
         &        &  f                   & \longmapsto & u_x \ast f &
\end{matrix}.$$
The transversality condition then allows us to "glue" these operators to obtain\\
$\op(u) \colon \CCC^{\infty}_c(\G,\hd) \to \CCC^{\infty}_c(\G,\hd)$ with $\op(u)(f)_{|\pi^{-1}(x)} = \op(u_x)(f_{|\pi^{-1}(x)})$.

\begin{lemma}Let $u,v \in S^*(\G)$ and $f,g \in \CCC^{\infty}_c(\G,\hd)$ then
\begin{itemize}
\item $\op(u)(f\ast g) = \op(u)(f)\ast g$
\item $\op(u\ast v) = \op(u) \circ \op(v)$
\end{itemize}
\end{lemma}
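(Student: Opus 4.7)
My plan is to reduce both identities to associativity of convolution on a single fiber, exploiting the fact that $\op$ is defined fiberwise. For any $h\in \CCC^\infty_c(\G,\hd)$ one has $\op(u)(h)|_{\G_x}=u_x \ast h|_{\G_x}$, where $u_x\in\mathcal{E}'(\G_x,\hd)$ is the restriction of $u\in S^*(\G)$ to the fiber $\G_x$ (which makes sense precisely by the transversality and proper support conditions in the definition of symbols, as recalled just after that definition). The convolution on $\CCC^\infty_c(\G,\hd)$ is also fiberwise: $(h_1\ast h_2)|_{\G_x}=h_1|_{\G_x}\ast h_2|_{\G_x}$. Since proper support of $u,v$ together with compact support of $f,g$ guarantees that each intermediate object ($u\ast f$, $v\ast f$, $f\ast g$, $u\ast v$ convolved with a test function) remains compactly supported, each of the four quantities $\op(u)(f\ast g)$, $\op(u)(f)\ast g$, $\op(u\ast v)(f)$, $\op(u)(\op(v)(f))$ lies in $\CCC^\infty_c(\G,\hd)$, and it suffices to compare them fiberwise.

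On the fiber $\G_x$, which is a connected simply connected nilpotent Lie group, both identities become instances of associativity of the convolution product, namely
\[
u_x \ast (f_x \ast g_x)=(u_x\ast f_x)\ast g_x \qquad\text{and}\qquad (u_x\ast v_x)\ast f_x=u_x\ast(v_x\ast f_x),
\]
where $u_x,v_x\in\mathcal{E}'(\G_x,\hd)$ and $f_x,g_x\in \CCC^\infty_c(\G_x,\hd)$. Because convolving a compactly supported distributional half-density with a compactly supported smooth half-density gives a compactly supported smooth half-density, every intermediate term lies in $\CCC^\infty_c(\G_x,\hd)$ and all expressions are unambiguously defined.

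The only essentially non-trivial step is thus associativity of the convolution product at the distributional level with the half-density conventions used here. This is standard: the half-density bundle $\hd$ is set up so that convolution defines a bilinear map $\hd\otimes\hd\to\hd$ with no auxiliary Jacobian, and so associativity follows directly from associativity of the group multiplication on $\G_x$. Alternatively one may approximate $u_x$ and $v_x$ by compactly supported smooth half-densities in the weak-$*$ topology of $\mathcal{E}'(\G_x,\hd)$, verify associativity in the smooth case by Fubini, and pass to the limit; the compact support and smoothness of the remaining factors make the limit interchanges immediate. This is the main (and only) technical point in the argument; it is not specific to the filtered setting.
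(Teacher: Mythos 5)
Your proof is correct. The paper does not actually prove this lemma; it is stated as a recalled fact with a pointer to the references \cite{van_erpyunken, mohsen2020index, dave2017graded}, so there is no in-text argument to compare against. Your approach — reduce to each fiber via the very definition of $\op$, note that the groupoid convolution on a group bundle is fiberwise, and then invoke associativity of convolution on the nilpotent Lie group $\G_x$ for compactly supported (distributional) half-densities — is exactly the standard route, and your bookkeeping of supports (proper support of $u,v$ giving $u_x,v_x\in\mathcal{E}'(\G_x,\hd)$, and smoothness of $u_x\ast f_x$ because $f_x$ is smooth and compactly supported) is at the right level of care. One could also add, for completeness, that the prior lemma guarantees $u\ast v\in S^{m_1+m_2}(\G)$ so that $\op(u\ast v)$ is well defined and its fiberwise restriction is $u_x\ast v_x$ — this is implicit in your write-up and follows from the transversality of fibered distributions.
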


We now state the Rockland condition and its consequences on the symbolic calculus. Rockland condition replaces the usual ellipticity condition. Moreover, when the symbols yield operators on $M$ (i.e. for $\G = T_HM$, see \cite{van_erpyunken}) the operators whose symbols satisfy Rockland's condition are hypoelliptic and admit parametrices (in the filtered calculus). This condition was first introduced by Rockland in \cite{Rockland} for differential operators on Heisenberg groups. It was then extended to manifolds with an osculating Lie group of rank at most two by Helffer and Nourrigat in \cite{HelfferNourrigat} and Lie groups with dilations in \cite{Groupsdilation}. See also the recent advances in \cite{AndroulidakisMohsenYunken} in a very broad setting generalizing the results of Helffer and Nourrigat.

\begin{definition}A symbol $u \in S^*(\G)$ satisfies the Rockland condition at $x \in M$ if there exist a compact set $K \subset \widehat{\G_x}$ on the dual space of irreducible representations such that for all $\pi \notin K$, $\pi(\op(u))$ and $\pi(\op(u^*))$ are injective. The symbol satisfies the Rockland condition if it satisfies it at every point $x \in M$ ($u$ is then also said to be Rockland).
\end{definition}

As in the classical case, Rockland condition corresponds exactly to the existence of parametrices. It was proved in \cite{Groupsdilation} for trivial bundles and this result was used in \cite{dave2017graded} for the proof in the general case.

\begin{theorem}[\cite{dave2017graded}]\label{RocklandEq}Let $u \in S^k_p(\G)$ then there is an equivalence :
\begin{itemize}
\item[1] $u$ satisfies the Rockland condition
\item[2] there exists $v \in S^{-k}_p(\G)$ such that $u\ast v -1, v\ast u -1 \in \CCC^{\infty}_p(\G,\hd)$ 
\end{itemize}
\end{theorem}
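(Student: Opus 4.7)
The plan is to prove both implications separately; the direction (2) $\Rightarrow$ (1) is quick, while (1) $\Rightarrow$ (2) is the substantive direction and will rely heavily on the stratification results established above. For (2) $\Rightarrow$ (1), starting from a parametrix $v$ and identities $u \ast v - 1 = r, v \ast u - 1 = r'$ with $r, r' \in \CCC^{\infty}_p(\G, \hd)$, I would fix $x \in M$ and use that the restriction to a fiber $\G_x$ of a properly supported smoothing element is compactly supported and smooth, hence Schwartz on $\G_x$, so that $\pi \mapsto \|\pi(r'_x)\|$ is a continuous function vanishing at infinity on $\widehat{\G_x}$. For $\pi$ outside a suitable compact $K \subset \widehat{\G_x}$, the operator $1 + \pi(r'_x)$ is then invertible, and the identity $\pi(v_x)\pi(u_x) = 1 + \pi(r'_x)$ forces $\pi(u_x)$ to be injective on smooth vectors; the analogous argument for $u^*$ completes the Rockland condition.

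For (1) $\Rightarrow$ (2), my plan is to first invert the principal symbol of $u$ in the symbol algebra and then correct by a Neumann-type argument. Using Corollary \ref{ContFieldCstarZero} together with the smoothness of $\G \to M$, I would localize the construction in $M$ to reduce to a parameter-dependent problem on a single graded group $G$. In the single-group case, the principal symbol of $u$ can be identified, via homogenization by a quasi-norm, with an element $\sigma_u$ of $C^*_0(G)$; the Rockland conditions on $u$ and $u^*$ translate to injectivity of $\sigma_u$ and $\sigma_u^*$ under every non-trivial irreducible representation. I would then construct a pre-inverse to $\sigma_u$ inductively along the nested decomposition from Corollary \ref{SubquotientsCompact}: on each subquotient $\faktor{J_i}{J_{i-1}} \cong \CCC_0(\Lambda_i, \mathcal{K}_i)$, injectivity of $\sigma_u$ and $\sigma_u^*$ combined with the rational realization of Lipsman--Rosenberg provides a fiberwise inverse with smooth (in fact rational) dependence on the parameter in $\Lambda_i$.

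The main obstacle I anticipate is the gluing of these stratum-wise inverses into a genuine element of $C^*_0(G)$ inverting $\sigma_u$ modulo a lower-order ideal: the argument relies crucially on Pedersen's fine stratification yielding \emph{trivial} continuous fields of compact operator algebras rather than merely continuous fields, which is precisely why Pedersen's stratification is preferred to Pukánszky's here. Once the principal symbol has been inverted in $C^*_0(G)$, I would pull the inverse back through the quasi-norm homogenization to a symbol of order $-k$ inverting $u$ modulo $S^{-k-1}$, and then a standard Borel-type asymptotic summation in the quasi-homogeneous symbol classes produces the required $v \in S^{-k}_p(\G)$ with $u \ast v, v \ast u \equiv 1$ modulo $\CCC^{\infty}_p(\G, \hd)$. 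Finally, extending from single groups to arbitrary smooth families of graded groups uses Corollary \ref{Stratification General family} in place of the single-group Pedersen stratification and follows the patching argument of \cite{dave2017graded}.
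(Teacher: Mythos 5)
The paper does not in fact prove this theorem itself; immediately below the statement it simply cites \cite[Lem.\ 3.9, Lem.\ 3.10]{dave2017graded}, which in turn rest on the analytic parametrix construction of \cite{Groupsdilation}. Your sketch is therefore not ``the same route'' as the paper in any meaningful sense, since the paper takes no route at all here. That said, your direction $(2)\Rightarrow(1)$ is sound: convolution by a fiberwise compactly supported smoothing kernel gives a function $\pi\mapsto\|\pi(r'_x)\|$ vanishing at infinity on $\widehat{\G_x}$, so $1+\pi(r'_x)$ is invertible off a compact set and one reads injectivity of $\pi(\op(u_x))$ directly from the parametrix identity; the same with $u^*$. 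This is essentially the standard argument and is correct.

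The direction $(1)\Rightarrow(2)$ has a genuine conceptual gap. You treat the Rockland condition — injectivity of $\pi(\op(u))$ and $\pi(\op(u^*))$ \emph{on smooth vectors} for $\pi$ outside a compact set — as if it immediately yielded invertibility of the associated element of $C^*_0(G)$ (or of the symbol in $\Sigma(G)$), which you then propose to realize stratum by stratum along Pedersen's filtration with ``rational dependence.'' But that inference is precisely the hard analytic content of the Rockland theorem: passing from fiberwise injectivity on Schwartz vectors of possibly unbounded operators $\pi(\op(u))$ to a bounded inverse with controlled regularity (let alone to a homogeneous distributional kernel of order $-k$) requires a priori estimates of Helffer--Nourrigat / maximal-hypoellipticity type, as carried out in \cite{Groupsdilation} and \cite{dave2017graded}. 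Nothing in the representation-theoretic stratification of $\widehat{G}$ forces the inverses to exist, to be uniformly bounded along a stratum, or to reassemble into a symbol; without the analytic input, an operator can be injective in every relevant $\pi$ and still fail to be invertible in $C^*_0(G)$. Relatedly, the Lipsman--Rosenberg rational realization shows that the \emph{coefficients} of $\pi(\op(u))$ depend rationally on the representation; it does not give that the inverse, where it exists, depends rationally or remains bounded. The Pedersen-vs-Pukánszky distinction, which you flag as the crucial ingredient, is decisive later in the paper for identifying the subquotients of the symbol algebra as trivial fields, but it plays no role in the construction of parametrices: the paper's Corollary~\ref{SubquotientsCompact} is used to analyze $\Sigma(G)$, not to prove Rockland equivalence. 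Finally, even granting a principal-symbol inverse, the Borel summation step needs the full quasi-homogeneous symbolic calculus (composition and asymptotic completeness), which is fine, but that is the easy part; the missing idea is the analytic parametrix estimate, and it cannot be replaced by the stratification machinery you invoke.
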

A proof can be found in \cite[Lem. 3.9, Lem. 3.10]{dave2017graded}, using arguments originating in \cite{Groupsdilation} (which corresponds to the case of trivial bundles).

\begin{theorem}\label{OpSymbols}Let $u \in S^k_c(G)$ then
\begin{itemize}
\item If $\Re(k)\leq 0$ then $\op(u)$ extends to an element of the multiplier algebra $\m(C^*(G))$
\item If $\Re(k)<0$ then $\op(u)$ extends to an element of $C^*(G)$
\item If $k>0$, $M$ is compact and $u$ satisfies the Rockland condition then $\op(u)$ extends to an unbounded regular operator $\overline{\op(u)}$ on $C^*(G)$ (viewed as a $C^*$-module over itself) and $\overline{\op(u)}^* = \overline{\op(u^*)}$
\end{itemize}
\end{theorem}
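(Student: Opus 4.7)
My plan is to treat the three bullet points in the order second, first, third, since the second is an analytic prerequisite for the other two and the zeroth-order case of the first is the main analytical difficulty. Throughout I would work fiberwise and glue the resulting estimates using that $C^*(\G)$ is a continuous field of $C^*$-algebras over $M$ (Proposition \ref{ContField}), or equivalently a Hilbert $C_0(M)$-module.

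For the second bullet ($\Re(k) < 0$), I would exploit the quasi-homogeneity of $u \in S^k_c(\G)$ together with smoothness away from the unit section: modulo $\CCC^\infty_p(\G,\hd)$, each $u_x$ is represented via a Haar system by a function on $\G_x$ homogeneous of degree $\Re(-k-Q_x) > -Q_x$, where $Q_x$ is the homogeneous dimension. This produces only a locally integrable singularity at the unit, and the compact support of $u$ bounds each $\|u_x\|_{L^1(\G_x,\hd)}$ uniformly in $x$. The analogous bound for $u_x^*$ controls the groupoid $I$-norm of $u$, and amenability of nilpotent Lie groups yields $\|u\|_{C^*(\G)} \leq \|u\|_I < \infty$, so $\op(u) \in C^*(\G)$.

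For the first bullet, the subcase $\Re(k) < 0$ is immediate from the previous step and the inclusion $C^*(\G) \hookrightarrow \m(C^*(\G))$. The remaining case $\Re(k) = 0$ is the hard part and requires the filtered-calculus analog of the Calder\'on--Vaillancourt theorem, which I would invoke from van~Erp--Yuncken and Androulidakis--Mohsen--Yuncken. The standard proof is of Kato type: for a real constant $C$ large enough, $C - u^* \ast u$ lies in $S^0_c(\G)$ and satisfies the Rockland condition, so Theorem \ref{RocklandEq} furnishes a symbolic square root, producing $\|\op(u_x)\|_{L^2 \to L^2}^2 \leq C$ uniformly in $x$. This uniform fiberwise bound extends $\op(u)$ to an adjointable endomorphism of the right Hilbert $C^*(\G)$-module $C^*(\G)$, hence to a multiplier.

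For the third bullet, I would apply Theorem \ref{RocklandEq} to $u$ to obtain a parametrix $v \in S^{-k}_p(\G)$ with $u \ast v - 1,\ v \ast u - 1 \in \CCC^\infty_p(\G,\hd)$. Compactness of $M$ makes proper support equivalent to compact support, so $v \in S^{-k}_c(\G)$ and $\op(v) \in C^*(\G)$ by the second bullet. I would then define $\overline{\op(u)}$ as the closure of $\op(u)$ on the dense submodule $\op(v)\cdot C^*(\G) + \CCC^\infty_c(\G,\hd)$. Applying Theorem \ref{RocklandEq} to $1 + u^* \ast u \in S^{2k}_c(\G)$, which is Rockland because $u$ is, yields a parametrix in $S^{-2k}_c(\G)$ whose image under $\op$ inverts $1 + \overline{\op(u^*)}\,\overline{\op(u)}$ in $C^*(\G)$; this is precisely Baaj--Julg's regularity criterion for unbounded operators on a Hilbert $C^*$-module. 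The adjoint identity $\overline{\op(u)}^* = \overline{\op(u^*)}$ would then follow from $\langle \op(u)f, g\rangle = \langle f, \op(u^*)g\rangle$ for $f, g \in \CCC^\infty_c(\G,\hd)$ combined with the density of this subspace in the graph of $\overline{\op(u)}$. The principal obstacle across the three parts is the zeroth-order $L^2$-boundedness underlying the first bullet, which I would take from the cited literature.
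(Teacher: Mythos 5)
The paper does not prove Theorem~\ref{OpSymbols} itself: immediately after the statement it says ``See \cite[Thm.~A.8]{mohsen2020index} for a detailed proof,'' so there is no in-paper argument to measure your sketch against. Taking your proposal on its own merits, the overall architecture (order $<0$ via $L^1$/$I$-norm bounds and amenability, the $\Re(k)=0$ case via a Calder\'on--Vaillancourt analogue, the Rockland case via Baaj--Julg regularity from a parametrix) is the right one and essentially matches how this circle of ideas is handled in the literature you cite.

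There are, however, two points that would not survive as written. First, the Kato/square-root sketch you offer for $\Re(k)=0$ is circular: checking that $C - u^*\ast u$ satisfies the Rockland condition amounts to checking that $\pi(\op(u))^*\pi(\op(u)) - C$ is injective for all $\pi$ outside a compact set, and the natural way to do that is to exhibit a \emph{uniform} bound $\sup_\pi\|\pi(\op(u))\| < \infty$---which is precisely the $L^2$-boundedness being proved. Even after a square root $b$ with $b^*b \equiv C - u^*u$ mod smoothing is produced, concluding the bound requires knowing that $\op(b)$ is itself $L^2$-bounded, so the bootstrap has no base case. The genuine input that breaks the circle is the boundedness of the exactly homogeneous degree-$0$ kernel $\Theta(u)$ as a singular-integral operator on each $L^2(\G_x)$ (Knapp--Stein/Folland--Stein/Christ--Geller--G\l owacki--Polin; the paper itself uses \cite[Thm.~6.19]{HomogeneousGroups} for exactly this in Section~5), together with the $L^1$ estimate on the Schwartz remainder $u - \Theta(u)$. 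You do say you would invoke the Calder\'on--Vaillancourt result from the literature, which is fine, but the mechanism you sketch for it is not the one that works.

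Second, in the third bullet you write ``$1 + u^*\ast u \in S^{2k}_c(\G)$, which is Rockland,'' and then apply Theorem~\ref{RocklandEq}. That containment is false: the unit $1$ of the convolution algebra is homogeneous of degree $0$, so $\delta_{\lambda*}(1+u^*u) - \lambda^{2k}(1+u^*u) = (1-\lambda^{2k})\cdot 1 + (\text{smooth})$ is not smooth when $k\neq 0$; the sum is not quasi-homogeneous and does not lie in $S^{2k}_c(\G)$. What one can do is apply Theorem~\ref{RocklandEq} to $u^*\ast u \in S^{2k}_c(\G)$ (Rockland and non-negative) to obtain $w\in S^{-2k}_c(\G)$ with $w(u^*u)\equiv 1$ mod smoothing, and then show at the operator level that $\lambda + \overline{\op(u)}^*\overline{\op(u)}$ has a bounded inverse in $C^*(\G)$ for $\lambda$ large enough, giving surjectivity of $1 + T^*T$ and hence Baaj--Julg regularity; the adjoint identity then follows as you indicate. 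The Baaj--Julg framework is the right one, but the parametrix needs to be applied to $u^*\ast u$, not to $1+u^*\ast u$.
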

See \cite[Thm. A.8]{mohsen2020index} for a detailed proof.

\begin{definition}A symbol $u \in S^*(T_HM)$ is Rockland (i.e "elliptic" in the filtered calculus) if it satisfies the Rockland condition.\end{definition}

Theorem \ref{OpSymbols} allows to complete the algebra of symbols. Let $\bar{S}^0_0(G) \subset \m(C^*(G))$ be the $C^*$-closure of $S^0_c(G)$ (we identify the algebra of order 0 symbols and its image by $\overline{\op}$).

\begin{definition}We denote by $\Sigma^k_{p/c}(G) = \faktor{S^k_{p/c}(G)}{S^{-\infty}_{p/c}(G)}$ of order $k \in \CC$ the set of principal symbols in the filtered calculus. For $k = 0$ we also have $\Sigma(G) = \faktor{\bar{S}^0_0(G)}{C^*(G)}$ for the $C^*$-algebra of principal symbols. The algebra $\Sigma(G)$ is a completion of $\Sigma^0_c(G)$.
\end{definition}

\begin{example}Let $\mathcal{U}(\g)$ be the enveloping algebra of the Lie algebra bundle $\g$. The inhomogeneous dilations $(\delta_{\lambda})_{\lambda\in \R^*_+}$ extend to algebra automorphisms of $\mathcal{U}(\g)$. Denote by $\mathcal{U}_k(\g)$ the subset of elements homogeneous of order $k$ for the dilations. Elements of the enveloping algebra decompose as sums of homogeneous elements, i.e. $\mathcal{U}(\g) = \bigoplus_{k \geq 0} \mathcal{U}_k(\g)$. Any element $D \in \Gamma(M,\mathcal{U}_k(\g))$ can be seen as a family of right-invariant differential operators on the fibers of $G$ and thus defines an element $D \in S^k_p(G)$.
\end{example}

Let $k > 0$ and $D \in \Gamma(M,\mathcal{U}_k(\g))$ be a Rockland differential operator. Assume $D$ to be self-adjoint and non-negative and $M$ to be a compact manifold. The third point of Theorem \ref{OpSymbols} allows to construct $\overline{\op(D)}^{\ii t} \in M(C^*(G))$ through functional calculus for $t\in \R$. One would hope to have some actual symbol $D^{\ii t} \in S^{\ii kt}_p(G)$ that would extend to the multiplier obtained through functional calculus. This is possible (even for the pseudodifferential operators) thanks to estimates on the heat kernels in the filtered calculus obtained in \cite[Thm. 2]{DaveHeat}.

\begin{theorem}[\cite{DaveHeat}]\label{ComplexSymbol}Let $\G \to M$ be a smooth family of graded groups over a compact base. Let $D \in \Gamma(M,\mathcal{U}_k(\Lie(\G)))$  be Rockland, essentially self-adjoint and non-negative with $k> 0$ an even number. There is a continuous family of symbols $D_{\ii t} \in S^{\ii kt}_p(\G)$ such that $\op(D_{\ii t})$ extends to the multiplier $\overline{\op(D)}^{\ii t}$ obtained from $\overline{\op(D)}$ through functional calculus. These symbols are also compatible with the product: $D_{\ii t_1}D_{\ii t_2} = D_{\ii (t_1+t_2)} \mod \CCC^{\infty}_c(\G)$. In particular the corresponding principal symbols form a group.\end{theorem}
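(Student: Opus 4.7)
The plan is to follow Seeley's classical strategy for complex powers, but replacing the classical heat kernel by the heat kernel of $D$ in the filtered calculus constructed in \cite{DaveHeat}. Concretely, since $D$ is Rockland, essentially self-adjoint and non-negative of order $k > 0$, the main input is a heat semigroup $(e^{-s\overline{\op(D)}})_{s>0} \subset M(C^*(\G))$ represented by a smooth family $h_s \in \CCC^{\infty}_p(\G,\hd)$ of convolution kernels, together with the precise short-time asymptotic estimate of \cite{DaveHeat}: under the dilation $\delta_{s^{1/k}}$, the rescaled kernel $s^{n/k}\delta_{s^{1/k}}^{*}h_s$ extends smoothly up to $s=0$ with a complete asymptotic expansion in powers of $s^{1/k}$. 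I would first collect these facts and check that the fiberwise supports of $h_s$ can be cut off properly (using a partition of unity and the fact that $M$ is compact) so that $h_s$ is properly supported, at the cost of a $\CCC^{\infty}_p(\G,\hd)$ correction that decays faster than any power of $s$.

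Next, I would define complex powers by the Mellin transform
\[
D_{-z} \;=\; \frac{1}{\Gamma(z)}\int_0^{1} s^{z-1} h_s \, ds \;+\; \frac{1}{\Gamma(z)}\int_1^{\infty} s^{z-1} h_s \, ds,
\]
for $\Re(z) > 0$. The tail integral produces a $\CCC^{\infty}_p(\G,\hd)$ term for all $z$ thanks to the exponential decay of $h_s$ away from the unit section (this uses the Rockland hypothesis to control $\|\op(h_s)\|$ for large $s$). The short-time integral is where the heat kernel expansion is used: term-by-term, the homogeneity $\delta_{\lambda*}h_s = \lambda^{-n}h_{\lambda^{-k}s}$ together with the change of variable $s \mapsto \lambda^{-k}s$ shows that, modulo smoothing, $\delta_{\lambda*}D_{-z} - \lambda^{-kz} D_{-z} \in \CCC^{\infty}_p(\G,\hd)$, so $D_{-z} \in S^{-kz}_p(\G)$. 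Transversality to $\pi$ is inherited from transversality of each $h_s$ plus uniform-in-$s$ estimates that legitimize fiberwise integration. This defines continuously $z \mapsto D_{-z}$ on the half-plane $\Re(z) > 0$ with values in the nuclear Fréchet topology of $S^{-kz}_p(\G)$.

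To reach purely imaginary powers, I would use the trick $D_{\ii t} := D \cdot D_{\ii t - 1}$ (convolution on the left by the homogeneous differential-operator-valued symbol $D \in S^k_p(\G)$). Since $\Re(-(\ii t - 1)) = 1 > 0$, the symbol $D_{\ii t - 1}$ is already constructed, lies in $S^{\ii kt - k}_p(\G)$, and the product rule for symbols gives $D_{\ii t} \in S^{\ii kt}_p(\G)$. Applying $\op$ and using $\op(u \ast v) = \op(u) \circ \op(v)$ (Lemma on symbol composition) together with the identity $\overline{\op(D)} \circ \overline{\op(D)}^{\ii t - 1} = \overline{\op(D)}^{\ii t}$ from functional calculus shows that $\op(D_{\ii t})$ indeed extends to $\overline{\op(D)}^{\ii t}$. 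The multiplicativity $D_{\ii t_1} \ast D_{\ii t_2} = D_{\ii (t_1 + t_2)} \mod \CCC^{\infty}_p(\G,\hd)$ follows from the semigroup identity $h_{s_1} \ast h_{s_2} = h_{s_1 + s_2}$ and Fubini applied to the Mellin representation, once it has been propagated through the $D\cdot(-)$ correction.

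The main obstacle is the short-time analysis: proving that the integral $\int_0^1 s^{z-1}h_s\,ds$ lands in the correct symbol class $S^{-kz}_p(\G)$, with uniform control in $z$ on vertical strips so that the analytic continuation and the product formula make sense. This is where the heat kernel expansion from \cite{DaveHeat} (rather than just its existence) is essential, both to identify the quasi-homogeneous leading behavior and to bound the remainders in the seminorms of $S^m_p(\G)$ uniformly as $s \to 0$. Everything else is essentially bookkeeping built on top of this estimate and on the functional calculus available from Theorem \ref{OpSymbols}.
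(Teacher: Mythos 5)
Note first that this theorem is not proved in the paper at all: it is cited as a result of Dave and Haller \cite{DaveHeat} (the text just before the statement says the complex powers are ``possible \dots thanks to estimates on the heat kernels in the filtered calculus obtained in [Thm. 2][DaveHeat]''), and the companion operator-level version is stated as Theorem~\ref{ComplexOp} with the same attribution. So there is no internal proof in this paper to compare against; the right comparison is with the method of \cite{DaveHeat}, and your heat-kernel plus Mellin-transform strategy (Seeley's method driven by short-time heat kernel asymptotics) is indeed the one used there.

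With that understood, the overall architecture of your sketch is sound, but one step as you have written it would not go through. You justify the convergence of the tail integral $\int_1^{\infty}s^{z-1}h_s\,ds$ by ``exponential decay of $h_s$ \dots this uses the Rockland hypothesis to control $\|\op(h_s)\|$ for large $s$.'' This is the compact-manifold intuition, where the spectrum of a positive Rockland operator is discrete and bounded away from zero on the orthogonal complement of the kernel. In the present setting $\G\to M$ is a bundle of \emph{noncompact} graded groups, the operator $\op(D_x)$ acts by convolution on $L^2(G_x)$, and for a homogeneous Rockland $D$ its spectrum is all of $[0,\infty)$; the heat semigroup is contractive but $\|\op(h_{s,x})\|=1$ for every $s$, so there is no exponential (or even any) decay of the operator norm. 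The actual mechanism that saves the construction is the \emph{exact} homogeneity of the kernel: since $D\in\Gamma(M,\mathcal{U}_k(\Lie(\G)))$ is homogeneous of degree $k$, one has $h_{s,x}=s^{-n/k}\,h_{1,x}\circ\delta_{s^{-1/k}}$, and the Mellin integral $\int_0^\infty s^{z-1}h_s\,ds$ defines a homogeneous element of $\mathcal{K}^{-kz}(\G)$ as a tempered distribution on $\mathscr{S}_0(\G,\hd)$ (not as an absolutely convergent Bochner integral). This is precisely the structure captured by Lemma~\ref{Taylor1} and Corollary~\ref{QuasiVSExact}: the divergence of the tail is absorbed in the distributional identification, not killed by decay. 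Once you replace the spurious exponential-decay argument by this homogeneity bookkeeping, your subsequent steps --- defining $D_{-z}$ for $\Re z>0$, passing to $D_{\ii t}=D\ast D_{\ii t-1}$, reading off the symbol class from the quasi-homogeneity of $\alpha_{\lambda*}D_{-z}-\lambda^{-kz}D_{-z}$, and deducing the group law from $h_{s_1}\ast h_{s_2}=h_{s_1+s_2}$ via Fubini --- give the result as claimed.
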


We end this section by an analog of Proposition \ref{ContField} for the algebra of principal symbols.

\begin{proposition}Let $\G \to M$ be a bundle of graded Lie groups. The algebra of principal symbols $\Sigma(\G)$ is a continuous field of $C^*$-algebras over $M$ with fiber at $x \in M$ equal to $\Sigma(\G_x)$.\end{proposition}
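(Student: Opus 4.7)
The plan is to realize $\Sigma(\G) = \bar{S}^0_0(\G)/C^*(\G)$ as a quotient of two $C_0(M)$-algebras that are each continuous fields, and then invoke the general fact that a quotient of a continuous $C_0(M)$-algebra by a continuous $C_0(M)$-ideal is still a continuous field with the expected fibers.

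First I would equip $\bar{S}^0_0(\G)$ with a natural $C_0(M)$-structure: multiplication by a pullback $\pi^* f$ for $f\in C_0(M)$ preserves $S^0_c(\G)$, since such pullbacks are constant along the fibers and hence central in the fiberwise convolution, and it extends to $\bar{S}^0_0(\G)\subset \m(C^*(\G))$ by $C^*$-continuity. The ideal $C^*(\G)$ is then automatically a $C_0(M)$-ideal. To identify fibers, transversality of $u\in S^0_c(\G)$ gives a smooth family $(u_y)_{y\in M}$ with $u_y\in \mathcal{E}'(\G_y,\hd)$, and the evaluation $u\mapsto u_x$ extends to a surjective $\ast$-homomorphism $\bar{S}^0_0(\G)\twoheadrightarrow \bar{S}^0_0(\G_x)$: surjectivity comes from cut-off of a tubular extension of any compactly supported symbol on the fiber, obtained from a local trivialization of the graded vector bundle $\Lie(\G)$. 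Combined with Proposition \ref{ContField}, the fiber of $\Sigma(\G)$ at $x$ is $\bar{S}^0_0(\G_x)/C^*(\G_x) = \Sigma(\G_x)$.

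Continuity of the field then hinges on continuity of $\bar{S}^0_0(\G)$ itself, i.e. on showing that $y\mapsto \|u_y\|_{\bar{S}^0_0(\G_y)}$ is continuous for each $u\in S^0_c(\G)$. Upper semicontinuity is automatic for any $C_0(M)$-algebra. For lower semicontinuity at a fixed $x\in M$, given $\varepsilon>0$ I would pick $w\in C^*(\G_x)$ with $\|w\|\leq 1$ and $\|u_x\ast w\| > \|u_x\|_{\bar{S}^0_0(\G_x)} -\varepsilon$, then lift to $\tilde w\in C^*(\G)$ with $\tilde w_x = w$ via Proposition \ref{ContField}. Since $u\ast \tilde w\in C^*(\G)$ is a continuous section of a continuous field, $\|u_y\ast \tilde w_y\|\to \|u_x\ast w\|$, giving $\liminf_{y\to x}\|u_y\|\geq \|u_x\|-\varepsilon$. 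Once continuity of $\bar{S}^0_0(\G)$ is secured, continuity of $\Sigma(\G)$ follows by the same lifting argument applied to near-minimizers of the quotient norm in $C^*(\G_x)$.

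The main obstacle is precisely the continuity of $\bar{S}^0_0(\G)$: multiplier algebras of continuous $C_0(M)$-algebras are not themselves continuous fields in general, so the argument must exploit the specific structure of compactly supported symbols, viewing $\op(u)$ as a $C_0(M)$-adjointable operator on the Hilbert $C_0(M)$-module $C^*(\G)$ whose norm can be probed by lifting fiberwise elements to global sections of $C^*(\G)$.
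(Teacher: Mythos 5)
Your plan to reduce to a continuity statement for a $C_0(M)$-algebra is the right one, and your lifting argument for the lower semicontinuity of $y \mapsto \|u_y\|$ when $u$ is regarded as an adjointable operator on the Hilbert $C_0(M)$-module $C^*(\G)$ is correct. But the step from $\bar{S}^0_0(\G)$ to the quotient $\Sigma(\G) = \bar{S}^0_0(\G)/C^*(\G)$ has a genuine gap. The ``general fact'' you invoke is false: a quotient of a continuous $C_0(M)$-algebra by a continuous $C_0(M)$-ideal need not be continuous. Take $M=[0,1]$, $A = C([0,1]^2)$ (a continuous field over $[0,1]$ with fibers $C([0,1])$), and $I = C_0(U)$ for the open set $U = \{(x,y): y>1/2\}\cup\{(x,y):x>0\}$. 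Both $A$ and $I$ are continuous fields, but for $a\equiv 1$ the quotient norm $\|[a]_x\|$ equals $1$ at $x=0$ and $0$ for $x>0$, so $A/I$ is only upper semicontinuous. Your proposed patch---lifting a near-minimizer $k\in C^*(\G_x)$ of the quotient norm to $\tilde k\in C^*(\G)$ and using continuity of $\|\sigma_y+\tilde k_y\|$---only gives $\limsup_y\|[\sigma]_y\|\leq\|\sigma_x+k\|$, i.e.\ upper semicontinuity, which is automatic for any $C_0(M)$-algebra and not what is at stake.

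The way around this is not to treat $\Sigma(\G)$ as a quotient but to represent it faithfully, and hence isometrically, inside $M\bigl(C^*_0(\G)\bigr)$ by means of the exactly homogeneous representatives supplied by Taylor's Lemma (what the paper packages as $\Theta\colon \Sigma^0_p(\G)\isomto \mathcal{K}^0(\G)$ in Corollary \ref{QuasiVSExact} and Proposition \ref{MultSymbols}). Injectivity of the fiberwise maps $\Sigma(\G_x)\to M\bigl(C^*_0(\G_x)\bigr)$ comes from Taylor's Lemma again: if a degree-zero homogeneous kernel $v$ annihilates $\mathscr{S}_0(\G_x,\hd)$ by convolution, then $\hat v$ vanishes off the zero section, so $v=0$ and the symbol is smoothing. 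Once $\Sigma(\G)\hookrightarrow M\bigl(C^*_0(\G)\bigr)$ is isometric on each fiber, your lifting argument applies verbatim with $C^*(\G)$ replaced by $C^*_0(\G)$ (which is a continuous field by Corollary \ref{ContFieldCstarZero}): near-\emph{maximizers} $w\in C^*_0(\G_x)$, $\|w\|\leq 1$, $\|\sigma_x\ast w\|>\|\sigma_x\|-\varepsilon$, lift to $\tilde w\in C^*_0(\G)$, and the section $\sigma\ast\tilde w\in C^*_0(\G)$ has continuous fiber norms, giving lower semicontinuity of $x\mapsto\|\sigma_x\|$. This is what the paper's terse ``by Proposition \ref{ContField}'' is really leaning on, and it is the standard route: the norm on $\Sigma(\G)$ must be identified with a multiplier norm on the open ideal $C^*_0(\G)$ rather than obtained as a quotient norm over $C^*(\G)$.
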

\begin{proof}
It is clear from the fact that the distributions are fibered over $M$ that $\Sigma(\G)$ is a $\CCC_0(M)$-algebra and the fiber at $x \in M$ is equal to $\Sigma(\G_x)$. The definition of the norm from \ref{OpSymbols} makes $x\mapsto \|\sigma_x\|$ continuous for $\sigma \in \Sigma(\G)$ by Proposition \ref{ContField}.\end{proof}

\begin{remmark}If $\G = G_0 \times M$ is a trivial bundle then $\Sigma(\G) = \Sigma(G_0) \otimes \CCC_0(M)$. \end{remmark}

\section{Construction of the isomorphism for symbols}\label{Section:IsomorphismSymbols}

Let $\G \to M$ denote a bundle of graded Lie groups over a compact base. We want to construct an isomorphism: 
$$\Phi \colon \Sigma(\G)\rtimes \R \to C^*_0(\G).$$
We first need to construct the $\R$ action on $\Sigma(\G)$ underlying the crossed product. For that, let $\Delta_0 \in \Gamma(\mathcal{U}(\Lie(\G)))$ be an essentially self-adjoint, non-negative Rockland symbol of positive even order $m$. We can consider the complex powers \(\Delta_0^{\ii t/m}, t\in \R\). Thanks to Theorem \ref{ComplexSymbol}  they correspond to a continuous family of principal symbols $\Delta_0^{\ii t/m} \in \Sigma^{\ii t}(G)$, $t \in \R$ which is a group under composition law for symbols. Since $\Delta_0$ is self-adjoint they also are unitaries. Let $\sigma \in \Sigma^0_c(G)$ and $t \in \R$, the symbol $\Ad(\Delta_0^{\ii t/m})\sigma := \Delta_0^{\ii t/m}\sigma\Delta_0^{-\ii t/m}$ is of order $0$. We hence obtain an action of $\R$ on $\Sigma^0_c(G)$. This action is continuous because the family of principal symbols obtained from Theorem \ref{ComplexSymbol} is continuous (and the product of symbols is continuous). The action thus extends to the $C^*$-completion $\Sigma(\G)$. We can also see directly the action \(\R \act \m(C^*_0(\G))\) by \(t \mapsto \Ad(\Delta_0^{\ii t/m})\) which is continuous by property of the functional calculus. Theorem \ref{ComplexSymbol} and the composition of symbols then show that this action restricts to the subalgebra \(\Sigma(\G)\).

We now want to define the morphism $\Phi$ sending the crossed product to $C^*_0(\G)$. To do this we need a (strongly continuous) family of unitary multipliers $u_t \in M_u(C^*_0(\G))$ and a morphism $\vphi \colon \Sigma(\G) \to M(C^*_0(\G))$ with the property that 
\begin{equation}\label{crossedprod}
\forall t \in \R, \sigma \in \Sigma(\G), \varphi\left(\Ad(\Delta_0^{\ii t/m})\sigma\right) = u_t \vphi(\sigma) u_t^*.
\end{equation}
These two constructions are similar and rely on a lemma due to Taylor. This lemma's purpose is to represent each class of principal symbols by a particular distribution on $\G$ which will be homogeneous on the nose. To use this lemma however we need to use tempered distributions on the fiber. To do this, recall that the Schwartz class of functions on a vector space does not depend on the choice of basis (nor on the norm). Therefore on a bundle of graded Lie groups, we denote by $\mathscr{S}(\G)$ the space of smooth sections\footnote{If the base \(M\) is not compact we also ask the section to be compactly supported over \(M\).} that are of Schwartz type on the fibers (we identify $\G$ and $\Lie(\G)$ as bundles over $M$). To use the groupoid convolution we extend the definition to half-densities. Denote by $\mathscr{S}(\G,\hd)$ the space of sections which, in any trivialization of $\Lambda^{1/2}\Lie(\G)$, become an element of $\mathscr{S}(\G)$. This definition indeed does not depend on the choice of trivialization (it only changes the Schwartz semi-norms to other equivalent ones). Notice that since for smooth families of groups we have \(r = s\), so \(\hd = r^*\Lambda^1\Lie(\mathcal{G})\) is a bundle of \(1\)-densities.

The space $\mathscr{S}(\G,\hd)$ contains $\CCC^{\infty}_c(\G,\hd)$. Moreover the groupoid convolution extends to a continuous product on $\mathscr{S}(\G,\hd)$, see \cite{HomogeneousGroups}. This makes the Schwartz algebra a subalgebra of $C^*(\G)$ (it is also stable under the involution).

On the space of Schwartz sections, we define a Fourier transform. Identifying $G$ with $\Lie(\G)$, it becomes:
\begin{align*}
\mathcal{F} \colon \mathscr{S}(\G,\hd) &\to \mathscr{S}(\Lie(\G)^*) \\
f &\mapsto \left((x,\eta) \mapsto \int_{\xi \in \g_x} e^{\ii \lag\eta,\xi\rag}f(x,\xi)\right)
\end{align*}
Consider the dual action of $\R^*_+$ on $\Lie(\G)^*$ given by $(\tsp\diff\delta_{\lambda})_{\lambda >0}$. The Fourier transform has the following equivariance property:
$$\forall \lambda > 0, \mathcal{F} \circ \delta_{\lambda}^* = \tsp\diff\delta_{\lambda}^* \circ \mathcal{F}$$

Among the Schwartz sections, denote by $\mathscr{S}_0(\G,\hd)$ those for which the Fourier transform vanishes at infinite order on the zero section of $\Lie(\G)^*$. It is a closed subalgebra of the Schwartz algebra (for the topology induced by the Schwartz semi-norms).

\begin{definition}Let $s \in \CC$ be a complex number, denote by $\mathcal{K}^s(\G)\subset \mathscr{S}_0(\G,\hd)'$ the set of fibered tempered distributions on $G$ that have singular support contained in the unit section and are homogeneous of degree $s$ with respect to the family of dilations $(\delta_{\lambda})_{\lambda > 0}$. \end{definition}

\begin{lemma}[Taylor \cite{Taylor}, Prop. 2.4]\label{Taylor1}Let $k \in \CC$, $u \in S^k_p(\G)$. Then there exists a unique smooth function $v \in \CCC^{\infty}(\Lie(\G)^*\setminus M)$ such that:
\begin{itemize}
\item $v$ is homogeneous of degree $k$: $\forall \lambda > 0, v \circ \tsp\diff\delta_{\lambda} = \lambda^k v$
\item if $\chi\in \CCC^{\infty}_c(\Lie(\G)^*)$ is equal to $1$ on a neighborhood of the zero section then $\hat{u} - (1-\chi)v \in \mathscr{S}(\Lie(\G)^*)$.
\end{itemize}
Conversely if a function $v \in \CCC^{\infty}(\Lie(\G)^*\setminus M)$ is homogeneous of degree $k$ then one can find a symbol $u \in S^k_c(\G)$ such that the second property is satisfied. \end{lemma}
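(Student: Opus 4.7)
I would treat the three assertions separately: uniqueness of the homogeneous function $v$, its existence given a symbol $u$, and the converse construction of a symbol from a given $v$. For \emph{uniqueness}, starting from two candidates $v_1,v_2$, the difference $w:=v_1-v_2$ is smooth on $\Lie(\G)^*\setminus M$, homogeneous of degree $k$, and $(1-\chi)w\in\mathscr{S}(\Lie(\G)^*)$. For fixed $\eta\neq 0$ I would pick $\lambda$ large enough that $\tsp\diff\delta_\lambda\eta\in\{\chi=0\}$; homogeneity gives $w(\tsp\diff\delta_\lambda\eta)=\lambda^k w(\eta)$, while Schwartz decay forces the left-hand side to decrease faster than any polynomial in $\lambda$, so $w(\eta)=0$.

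For \emph{existence}, I would first note that, because $u$ is properly supported and transversal to $\pi$, identifying $\G$ with $\Lie(\G)$ via the exponential map realizes $\hat u$ as a smooth fibered function on $\Lie(\G)^*$ of at most polynomial growth in each fiber. The quasi-homogeneity $\delta_{\lambda*}u-\lambda^k u\in\CCC^{\infty}_p(\G)$, dualized through the equivariance $\mathcal{F}\circ\delta_\lambda^*=\tsp\diff\delta_\lambda^*\circ\mathcal{F}$, Fourier-transforms to
\[
\hat u(\tsp\diff\delta_\lambda\eta)-\lambda^k\hat u(\eta)\in\mathscr{S}(\Lie(\G)^*),\qquad \lambda>0.
\]
I would then extract a homogeneous representative by averaging $\hat u$ along the $\R^*_+$-orbits: fixing a homogeneous quasi-norm $|\cdot|$ on $\Lie(\G)^*$ and a cutoff $\psi\in\CCC^{\infty}_c(\R^*_+)$ of mass $1$ supported in some $[a,b]$ with $a>0$, set
\[
v(\eta):=\int_0^\infty \lambda^{-k}\,\hat u(\tsp\diff\delta_\lambda\eta)\,\psi(\lambda|\eta|)\,\frac{\diff\lambda}{\lambda}.
\]
A change of variables in $\lambda$ shows $v$ is homogeneous of degree $k$, and smoothness comes from the support constraint forcing $\hat u$ to be evaluated away from the zero section. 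Substituting the quasi-homogeneity identity inside the integral expresses $\hat u-v$ as an integral of Schwartz functions weighted by $\psi$, which gives $\hat u-(1-\chi)v\in\mathscr{S}(\Lie(\G)^*)$.

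For the \emph{converse}, I would start from a smooth $v$ homogeneous of degree $k$, set $w:=(1-\chi)v$, a smooth tempered function on $\Lie(\G)^*$, and take its fiberwise inverse Fourier transform $\tilde u$, a fibered tempered distribution on $\Lie(\G)\simeq\G$ smooth off the zero section. Multiplying $\tilde u$ by a compactly supported smooth bump on $\G$ equal to $1$ near $M$ then produces $u\in S^k_c(\G)$: compact support is built in, transversality follows from the smoothness of $w$ off the zero section, and the quasi-homogeneity of $u$ is a direct translation of the homogeneity of $v$ modulo the Schwartz error coming from the cutoff.

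The main obstacle I expect is the existence step: the error terms $s_\lambda(\eta):=\hat u(\tsp\diff\delta_\lambda\eta)-\lambda^k\hat u(\eta)$ depend on $\lambda$, and I will need Schwartz bounds uniform in $\lambda$ (and smooth in $x\in M$) so that the averaged $v$ is genuinely smooth and the residue $\hat u-(1-\chi)v$ is globally Schwartz rather than merely decaying along radial lines. Controlling this requires exploiting the smooth $\lambda$-dependence of the quasi-homogeneity remainder together with the polynomial growth of $\hat u$, both of which ultimately come from the structure of $u$ as a properly supported fibered distribution on $\G$.
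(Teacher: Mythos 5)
The paper does not reprove Lemma~\ref{Taylor1}: it is cited verbatim from Taylor's work \cite[Prop.~2.4]{Taylor} (and the later global version, Lemma~\ref{Taylor2}, is likewise a citation). So there is no internal proof to compare against, and your attempt has to stand on its own. The uniqueness argument and the converse step are sound. The existence step, however, contains a genuine error that is not just the uniformity issue you flag at the end.

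The problem is the averaging formula
$v(\eta)=\int_0^\infty \lambda^{-k}\,\hat u(\tsp\diff\delta_\lambda\eta)\,\psi(\lambda|\eta|)\,\frac{\diff\lambda}{\lambda}$.
After the substitution $\mu=\lambda|\eta|$ the integrand is forced to live on the fixed annulus $\{|\cdot|\in[a,b]\}$: you are taking a weighted average of $\hat u$ over a \emph{fixed compact shell}, not extracting its asymptotics at infinity. This does give a function that is homogeneous of degree $k$, but it differs from the ``correct'' homogeneous asymptotic of $\hat u$ by a nonzero homogeneous function of degree $k$. Concretely, take $G=\R$, $k=0$, and $u=\delta_0-\varphi\, G$ with $G$ a Gaussian and $\varphi$ a compactly supported bump equal to $1$ near $0$, so that $\hat u(\eta)=1-\widehat{\varphi G}(\eta)$. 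The correct homogeneous $v$ is the constant $1$. Your averaged $v$ is the constant $1-\int \widehat{\varphi G}(\mu)\,\psi(\mu)\frac{\diff\mu}{\mu}$, generically $\ne 1$, and consequently $\hat u-(1-\chi)v$ tends to a nonzero constant at infinity and is not Schwartz. For $\Re(k)>0$ the error is even worse (of order $|\eta|^{\Re(k)}$). Feeding the quasi-homogeneity identity into the integral, as you propose, does not rescue this, because $s_\lambda(\eta)$ is evaluated at $\lambda\sim 1/|\eta|\to 0$, a regime where the individual terms $\hat u(\tsp\diff\delta_\lambda\eta)$ and $\lambda^k\hat u(\eta)$ remain bounded away from $0$ and no cancellation occurs.

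The classical construction --- which is also the one Taylor uses --- is the opposite: define $v$ as the limit at infinity along dilation orbits,
$v(\eta):=\lim_{\lambda\to+\infty}\lambda^{-k}\,\hat u(\tsp\diff\delta_\lambda\eta)$.
The convergence of this limit follows from a cocycle estimate: writing $s_\lambda:=\hat u\circ\tsp\diff\delta_\lambda-\lambda^k\hat u$, the identity $s_{\lambda\mu}(\eta)=\lambda^k s_\mu(\eta)+s_\lambda(\tsp\diff\delta_\mu\eta)$ shows that $\mu^{-k}s_\mu(\eta)$ is Cauchy as $\mu\to\infty$, because for $\lambda$ in a fixed compact range (say $[1,2]$) the family $s_\lambda$ is Schwartz with uniform bounds, and $\tsp\diff\delta_\mu\eta\to\infty$. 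The same estimate gives that $\hat u-v$ is Schwartz away from the zero section, and homogeneity of $v$ is then immediate. This also cleanly isolates the only genuine uniformity needed: Schwartz seminorms of $s_\lambda$ uniform in $\lambda$ over a compact subinterval of $\R^*_+$, which does hold (and is the part you correctly anticipate has to be controlled). If you want to keep a smooth averaging in $\lambda$ it must be applied at scales tending to infinity, not at the fixed scale $\lambda\approx 1/|\eta|$; otherwise you capture the average of $\hat u$ on an annulus rather than its radial limit.
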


\begin{corollary}\label{QuasiVSExact}There is a linear isomorphim $\Theta \colon \Sigma^s_p(\G) \to \mathcal{K}^s(\G)$ for each $s \in \CC$, which is compatible with the convolution and adjoints. 
\end{corollary}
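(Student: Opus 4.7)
The plan is to construct $\Theta$ directly from Lemma~\ref{Taylor1}. Given $u \in S^s_p(\G)$, let $v \in \CCC^{\infty}(\Lie(\G)^*\setminus M)$ be the unique smooth function, homogeneous of degree $s$, provided by Taylor's lemma. Define $\Theta(u) \in \mathscr{S}_0(\G,\hd)'$ by the pairing
$$\langle \Theta(u), f \rangle := \langle v, \mathcal{F}(f) \rangle, \qquad f \in \mathscr{S}_0(\G,\hd).$$
This is well-defined because $\mathcal{F}(f)$ vanishes to infinite order along $M$ (taming the singularity of $v$ at the zero section) while $v$ has at most polynomial growth at infinity. Homogeneity of $\Theta(u)$ of degree $s$ follows from the equivariance $\mathcal{F}\circ \delta_{\lambda}^* = \tsp\diff\delta_{\lambda}^*\circ \mathcal{F}$ together with the homogeneity of $v$, and the singular support of $\Theta(u)$ lies in the unit section of $\G$ by the classical fact that the inverse Fourier transform of a homogeneous function smooth off the origin is itself smooth off the origin, applied fiberwise (smooth $M$-dependence being built into Taylor's construction). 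Thus $\Theta(u) \in \mathcal{K}^s(\G)$.

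Next I would verify that $\Theta$ descends to a linear bijection $\Sigma^s_p(\G) \to \mathcal{K}^s(\G)$. For well-definedness, if $u-u' \in S^{-\infty}_p(\G) = \CCC^{\infty}_p(\G,\hd)$ then on each fiber $(u-u')_x$ is compactly supported smooth, so its fiberwise Fourier transform is Schwartz; the uniqueness clause of Lemma~\ref{Taylor1} then forces the associated homogeneous functions to coincide. The same argument yields injectivity: if $\Theta(u)=0$ then $v=0$, so $\hat u$ is fiberwise Schwartz and $u$, being properly supported, is compactly supported smooth on each fiber, hence lies in $S^{-\infty}_p(\G)$. Surjectivity follows from the converse half of Lemma~\ref{Taylor1}: any $w \in \mathcal{K}^s(\G)$ has Fourier transform restricting to a homogeneous smooth function $v$ on $\Lie(\G)^*\setminus M$, and Taylor's converse produces $u \in S^s_c(\G)\subset S^s_p(\G)$ with $\Theta([u]) = w$.

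Compatibility with the adjoint is straightforward from the Fourier identity relating $\widehat{u^{*}}$ to $\hat u$, which intertwines the involutions on both sides of $\Theta$. Compatibility with convolution is the technical heart of the corollary: the idea is to replace each $u_i \in S^{s_i}_p(\G)$ by a properly supported representative of the form $\mathcal{F}^{-1}((1-\chi)v_i)$ up to a Schwartz error, then convolve, using the standard fact that the symbol convolution on $S^*_p(\G)$ is continuous and annihilates $S^{-\infty}_p(\G)$ on either factor, so that $u_1 \star u_2 - \mathcal{F}^{-1}((1-\chi)v_1) \star \mathcal{F}^{-1}((1-\chi)v_2) \in S^{-\infty}_p(\G)$. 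The main obstacle is then to show that $\Theta(u_1) \star \Theta(u_2)$, interpreted as the convolution of two fiberwise tempered distributions with singular support at the unit section, is itself well-defined on $\mathscr{S}_0(\G,\hd)$, belongs to $\mathcal{K}^{s_1+s_2}(\G)$, and has Taylor data matching that of $u_1\star u_2$. This is a classical statement for the convolution algebra of homogeneous distributions on graded nilpotent Lie groups in the spirit of Folland--Stein, and in our fibered setting reduces to the single-fiber case via Proposition~\ref{ContField} and the smooth $M$-dependence of the Taylor decomposition.
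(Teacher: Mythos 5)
Your construction of $\Theta$ as the inverse Fourier transform of the Taylor extension $v$, together with the well-definedness and bijectivity arguments via the uniqueness clause of Lemma~\ref{Taylor1}, is exactly the route the paper takes. You also sketch the verification of compatibility with convolution and adjoints, which the paper asserts in the statement but omits from its proof; your outline of that step (matching Taylor data of a product against the distributional convolution, reducing to single fibers via Proposition~\ref{ContField}) is reasonable and fills a gap the paper silently leaves to the reader.
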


\begin{proof}
Let $u \in S^s_p(\G)$. Define $v$ as in Taylor's lemma. 
Using its asymptotic behaviour near the zero section, the function $v$ extends to a tempered distribution 
$$v \in \F(\mathscr{S}_0(\G,\hd))'.$$
In other word, it is a continuous linear form on the space of Schwartz functions on \(\Lie(\G)^*\) that vanish at infinite order on the zero section.
The inverse Fourier transform $w$ of \(v\) is in $\mathscr{S}_0(\G,\hd)'$ and is homogeneous of degree $s$, i.e. $w \in \mathcal{K}^s(\G)$. This construction gives a linear map $S^s_c(\G) \to \mathcal{K}^s(\G)$ which is surjective. By the uniqueness of $v$ in Taylor's lemma, the kernel of this map consists of smoothing symbols. It is thus equal to $S^{-\infty}_p(\G)$. The resulting quotient map $\Theta \colon \Sigma^s_p(\G) \to \mathcal{K}^s(\G)$ is then an isomorphism.
\end{proof}

\begin{lemma}[Christ, Geller, Głowacki, Polin \cite{Groupsdilation}, Prop. 2.2] Let $u \in \mathcal{K}^s(\G)$. The convolution by $u$ is a continuous map from $\mathscr{S}_0(\G,\hd)$ to itself.\end{lemma}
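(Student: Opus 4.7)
The plan is to reduce immediately to a single graded nilpotent group \(G\)---the statement is fiberwise over \(M\)---and, via the exponential map, to identify \(G\) with \(\g\) as a smooth manifold. Under this identification, the Euclidean Fourier transform on \(\g\) sends a test function \(f \in \mathscr{S}_0(G,\hd)\) to a Schwartz function \(\hat f\) on \(\g^*\) vanishing to infinite order at the origin (equivalently, every moment \(\int y^\alpha f(y)\,dy\) vanishes), and sends \(u \in \mathcal{K}^s(G)\) to a function \(v = \hat u\), smooth on \(\g^*\setminus\{0\}\) and homogeneous, as guaranteed by Taylor's lemma and Corollary~\ref{QuasiVSExact}. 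The group convolution \(u * f\) becomes, in \(\g\)-coordinates, a BCH-twisted convolution, whose Euclidean Fourier transform is a deformed product \(v \star \hat f\).

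The first task is to show \(u*f \in \mathscr{S}(G,\hd)\). Smoothness follows from the singular-support hypothesis on \(u\): split \(u = u_0 + u_\infty\) with \(u_0\) compactly supported near the origin and \(u_\infty \in \CCC^\infty\); convolution with each factor preserves smoothness since \(f\) is Schwartz. For rapid decay, exploit the dilation covariance
\[(u*f)(\delta_\lambda x) = \lambda^{s+Q} \bigl(u*(f\circ \delta_\lambda)\bigr)(x),\]
where \(Q\) is the homogeneous dimension, to reduce decay at infinity to controlling \((u*f_\lambda)(x_0)\) uniformly on \(\{\|x_0\|=1\}\) as \(\lambda \to \infty\). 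Splitting the integral into a piece localized near \(y = x_0\) (where \(u\) is smooth) and its complement (where \(f_\lambda\) is uniformly tiny by Schwartz decay), and using the vanishing of all moments of \(f\) to Taylor-expand and absorb the singularity of \(u\) at the origin, yields decay faster than any power. Left-invariant derivatives commute with convolution on the left, and \(\mathscr{S}_0\) is stable under derivations, so iterating gives Schwartz bounds on all derivatives with continuous dependence on the Schwartz semi-norms of \(f\).

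The second task is to verify that \(\widehat{u*f}\) still vanishes to infinite order at the origin of \(\g^*\). Expanding the BCH exponential in the twisted-convolution kernel produces an expansion of \(v \star \hat f\) whose terms are products of derivatives of \(v\) with derivatives of \(\hat f\), weighted by polynomials in \(\xi\). Every such term vanishes to infinite order at the origin because every derivative of \(\hat f\) does, while the factors involving \(v\) are controlled by its homogeneity; hence \(\widehat{u*f} \in \mathscr{S}_0(\g^*)\) and therefore \(u*f \in \mathscr{S}_0(G,\hd)\).

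The main difficulty I expect is giving rigorous meaning to the pairing of the homogeneous distribution \(u\)---which may be severely singular at the origin when \(\Re(s)\) is very negative---against the various localized test functions arising in the two tasks above. This is precisely where the assumption \(f \in \mathscr{S}_0\) is indispensable: the vanishing of all moments of \(f\) is a Calder\'on--Zygmund-type cancellation condition that makes convolution by homogeneous kernels well defined and bounded on \(\mathscr{S}_0\). Once this cancellation is properly extracted, checking that the estimates are uniform enough to yield continuity in the Schwartz topology is essentially routine.
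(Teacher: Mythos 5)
The paper does not give a proof of this lemma: it is cited verbatim from Christ--Geller--G\l{}owacki--Polin \cite{Groupsdilation}, Proposition~2.2, and used only as a black box, so there is nothing in the paper itself to compare your argument with.

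On its own merits your outline is reasonable, with one simplification worth pointing out. The passage to a ``deformed product $v\star\hat f$'' on the Fourier side is avoidable and, for groups of step higher than two, hard to make precise. Recall that $f\in\mathscr{S}_0$ is equivalent to vanishing of all Euclidean moments of $f$, and for any multi-index $\alpha$,
\[
\int_G (u*f)(x)\,x^\alpha\,\diff x\;=\;\Bigl\langle u(y),\ \int_G f(z)\,(yz)^\alpha\,\diff z\Bigr\rangle.
\]
Because $G$ is nilpotent, Baker--Campbell--Hausdorff makes $(yz)^\alpha$ a polynomial in $(y,z)$, so the inner integral is a finite sum of $y$-polynomials times moments of $f$, all of which vanish since $f\in\mathscr{S}_0$; thus every Euclidean moment of $u*f$ is zero and no Fourier-side manipulation is needed. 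The genuinely delicate part is the Schwartz estimate itself, together with continuity. Your split $u=u_0+u_\infty$ and the dilation-covariance reduction are the right skeleton, but note that the smooth tail $u_\infty$ only has polynomial growth $\sim|x|^{\Re s}$ at infinity, so $u_\infty*f$ decays only because the first several terms of a Taylor expansion of $u_\infty$ about the evaluation point are killed by the vanishing moments of $f$; a similar moment-compensated Taylor argument is needed to control the pairing against the singular compactly supported piece $u_0$ when $\Re s$ is large. These estimates must then be iterated over all left-invariant derivatives (which on a graded group translate into Euclidean derivatives with polynomial coefficients, so $\mathscr{S}_0$ is stable under them) and tracked uniformly in order to get continuity in the Schwartz seminorms. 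None of this is a gap in principle, but it is a substantial quantity of real-variable work, which is exactly why the paper outsources it to \cite{Groupsdilation}.
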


\begin{proposition}Let $k \in \mathcal{K}^s(\G)$ be a homogeneous distribution of degree $s \in \CC$. If $\Re(s) = 0$ then $k$ extends to an element of $M(C^*_0(G))$. Moreover 
$$ \|k\| = \sup_{x\in M} \|k_x\|_{L^2(G_x)} = \sup_{x\in M, \pi \in \hat{G_x}}\|\pi(k_x)\|.$$
\end{proposition}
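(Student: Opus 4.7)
The plan is to realize $k$ inside the symbol framework, invoke Theorem~\ref{OpSymbols} to obtain a multiplier of $C^*(\G)$, and then check both that this multiplier preserves the ideal $C^*_0(\G)$ and that its norm is given by the stated formula.

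First, by Corollary~\ref{QuasiVSExact} we can choose a symbol $u \in S^s_c(\G)$ with $\Theta(u) = k$. Since $\Re(s) = 0 \le 0$, Theorem~\ref{OpSymbols} gives $\op(u) \in \m(C^*(\G))$, and a careful tracking of Taylor's lemma shows that convolution by $k$ on $\CCC^{\infty}_c(\G,\hd)$ differs from $\op(u)$ by an operator whose kernel has Schwartz-type Fourier transform, hence a bounded fiberwise correction. Combined with the Christ--Geller--G\l owacki--Polin continuity of convolution by $k$ on $\mathscr{S}_0(\G,\hd)$, this establishes that $k$ extends to a left multiplier of $C^*(\G)$. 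To see that it in fact lies in $\m(C^*_0(\G))$, note that $k$ is smooth off the unit section and homogeneous of degree $s$ with $\Re(s)=0$, so on each fiber $\G_x$ the trivial representation sends $k_x * f$ to $0$ whenever $f \in C^*_0(\G_x)$; hence $\op(k)$ preserves the ideal $C^*_0(\G)$.

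For the norm formula, I would proceed in three steps. By Corollary~\ref{ContFieldCstarZero} the algebra $C^*_0(\G)$ is a continuous field over $M$ with fibre $C^*_0(\G_x)$, so the multiplier norm satisfies $\|k\| = \sup_{x \in M}\|k_x\|_{\m(C^*_0(\G_x))}$. Each fibre group is nilpotent, hence amenable, so the full and reduced $C^*$-algebras coincide and the multiplier norm of $k_x$ equals the operator norm of the convolution $\lambda_x(k_x)\in B(L^2(\G_x))$. Finally the Plancherel decomposition of the regular representation into irreducibles, which here is organised by the Pedersen stratification of Corollary~\ref{SubquotientsCompact}, identifies $\|\lambda_x(k_x)\|$ with $\sup_{\pi \in \widehat{\G_x}}\|\pi(k_x)\|$. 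Homogeneity of $k_x$ combined with $\Re(s)=0$ implies that $\|\pi(k_x)\|$ is constant along each $\R^*_+$-orbit in $\widehat{\G_x}$, so the supremum is really taken over the transversal cross-sections $C_i \subset \Lambda_i$.

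The main obstacle is the harmonic-analytic step: the mere fact that convolution by a homogeneous tempered distribution of critical degree extends to a bounded $L^2$-operator on a nilpotent group is a non-abelian Calder\'on--Zygmund-type theorem. The cleanest path is to appeal to the boundedness results of Christ--Geller--G\l owacki--Polin already in play, combined with the uniform continuity of the field $\pi \mapsto \pi(k_x)$ across the Pedersen strata, which is guaranteed by the Lipsman--Rosenberg smooth realizations cited earlier.
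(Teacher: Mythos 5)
Your treatment of the norm identity is sound in spirit: the continuous-field structure of $C^*_0(\G)$ over $M$ gives $\|k\| = \sup_x\|k_x\|$, amenability of each fiber identifies the regular-representation norm with $\sup_\pi\|\pi(k_x)\|$, and the inclusion $C^*_0(\G_x)\lhd C^*(\G_x)$ is essential so multiplier norms agree. The gap is in your boundedness step. You claim that, after choosing $u\in S^s_c(\G)$ with $\Theta(u)=k$, convolution by $k$ differs from $\op(u)$ by an operator ``whose kernel has Schwartz-type Fourier transform.'' That is not true. If $v=\widehat{k}$ denotes the exactly homogeneous function of Taylor's lemma and $\chi$ a cutoff equal to $1$ near the zero section, then $\widehat{u}-(1-\chi)v$ is indeed Schwartz, but the remaining piece $\chi v$ is compactly supported and bounded yet \emph{discontinuous} at the zero section (a degree-$0$ homogeneous function is continuous at the origin only if it is constant). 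So $\widehat{u}-v$ is bounded, not Schwartz, and boundedness of a Fourier multiplier does not by itself give $L^2$-boundedness of the associated convolution on a noncommutative graded group --- that is exactly the singular-integral estimate you are trying to obtain. Moreover the Christ--Geller--G{\l}owacki--Polin lemma you invoke only gives continuity for the Fr\'echet topology of $\mathscr S_0(\G,\hd)$, not a $C^*$-norm bound, so by itself it cannot produce a multiplier. Since Theorem~\ref{OpSymbols} is itself proved from Calder\'on--Zygmund-type estimates on homogeneous groups, the detour through $\op(u)$ buys you nothing and leaves the essential input implicit.

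The paper's proof is shorter and avoids this. One first replaces $k$ by $k^*k$: homogeneity degrees add under convolution and $k^*$ has degree $\bar s$, so $k^*k$ has degree $2\Re(s)=0$, and the $C^*$-identity reduces all three norms in the statement to the case $s=0$. For degree-$0$ homogeneous kernels, Theorem~6.19 of Folland--Stein gives $L^2(\G_x)$-boundedness of convolution directly on each fiber; continuity in $x$ and compactness of $M$ give the uniform bound, and amenability gives the last equality. The extension to $\m(C^*_0(\G))$ now follows because convolution by $k$ preserves $\mathscr S_0(\G,\hd)$ (the Christ--Geller--G{\l}owacki--Polin lemma), whose closure in the regular representation is $C^*_0(\G)$ by amenability, and the uniform $L^2$ bound controls the extension. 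You should cite the Folland--Stein estimate directly after the $k^*k$ reduction rather than route through the symbol calculus.
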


\begin{proof}
    First, replacing \(k\) by \(k^*k\), we may assume that \(s= 0\). Then, using \cite[Thm. 6.19]{HomogeneousGroups}, we obtain that convolution with \(k_x\) is bounded on \(L^2(\G_x)\). This bound is continuous in \(x\) so we get \(\sup_{x\in M} \|k_x\|_{L^2(\G_x)} < +\infty\) by compactness of \(M\). This automatically gives the last equality.
    To obtain that \(k\) extends to a multiplier of \(C^*_0(\G)\) we use the fact that \(C^*_0(\G)\) is the closure of \(\mathscr{S}_0(\G,\hd)\) in the regular reprensentation since \(\G\) is amenable.
\end{proof}

\begin{proposition}\label{MultSymbols}The composition of the map $\Theta$ with the injection of the previous proposition, $\mathcal{K}^0(\G) \hookrightarrow M(C^*_0(\G))$, extends to a *-homomorphism 
\[\varphi \colon \Sigma(\G) \to M(C^*_0(\G)).\]
\end{proposition}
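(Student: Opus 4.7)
The plan is to define $\varphi$ first on the dense $*$-subalgebra $\Sigma^0_c(\G)\subset \Sigma(\G)$ by $\varphi_0 := \iota\circ\Theta$, with $\Theta$ the $*$-isomorphism from Corollary~\ref{QuasiVSExact} and $\iota\colon \mathcal{K}^0(\G)\hookrightarrow M(C^*_0(\G))$ the embedding of the previous proposition, and then extend to $\Sigma(\G)$ by continuity. That $\varphi_0$ is a $*$-homomorphism follows from Corollary~\ref{QuasiVSExact}, which shows that $\Theta$ respects convolution and involution, together with the fact that $\iota$ does as well: convolution of homogeneous distributions matches the product of multipliers because $\iota$ is implemented by left convolution in the groupoid algebra.

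The main task is to establish the contractivity
\[
\|\varphi_0(\sigma)\|_{M(C^*_0(\G))}\leq \|\sigma\|_{\Sigma(\G)} \qquad (\sigma\in \Sigma^0_c(\G)),
\]
so that $\varphi_0$ extends uniquely to a $*$-homomorphism $\varphi\colon \Sigma(\G)\to M(C^*_0(\G))$. To this end, fix a representative $u\in S^0_c(\G)$ of $\sigma$. By Theorem~\ref{OpSymbols}, $\overline{\op}(u)\in M(C^*(\G))$; for any $c\in C^*(\G)$ the element $\overline{\op}(u)+c$ still represents $\sigma$ in $\Sigma(\G)=\bar{S}^0_0(\G)/C^*(\G)$, so $\|\sigma\|_{\Sigma(\G)} = \inf_{c\in C^*(\G)}\|\overline{\op}(u)+c\|_{M(C^*(\G))}$. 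The restriction to the ideal $C^*_0(\G)\triangleleft C^*(\G)$ yields a contractive $*$-homomorphism $M(C^*(\G))\to M(C^*_0(\G))$, whence
\[
\|\sigma\|_{\Sigma(\G)}\geq \inf_{c\in C^*(\G)}\|(\overline{\op}(u)+c)|_{C^*_0(\G)}\|_{M(C^*_0(\G))}.
\]

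The main obstacle is to identify this infimum with $\|\varphi_0(\sigma)\|_{M(C^*_0(\G))}$. Here I would combine the norm formula $\|m\|_{M(C^*_0(\G))}=\sup_{\pi\in \widehat{\G}\setminus M}\|\pi(m)\|$ from the previous proposition with the homogeneity of $\Theta(u)$. On one hand, Lemma~\ref{Taylor1} ensures that $u-\Theta(u)$ reduces to a Schwartz-type kernel on the fibers, so for every non-trivial irrep $\pi$ of the corresponding $\G_x$, the discrepancy $\pi((\overline{\op}(u)+c)|_{C^*_0(\G)})-\pi(\Theta(u))$ has norm decaying to $0$ as $\pi$ leaves compacts of $\widehat{\G}\setminus M$, by a Riemann--Lebesgue-type property of elements of $C^*(\G)$. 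On the other hand, since $\Theta(u)\in\mathcal{K}^0(\G)$ is $\delta_\lambda$-homogeneous of degree $0$ and the Fourier transform intertwines the group and dual dilations, $\pi\mapsto\|\pi(\Theta(u))\|$ is constant along each $\R^*_+$-orbit of the zoom action on $\widehat{\G}\setminus M$. Evaluating along the unbounded end of any orbit then gives $\|(\overline{\op}(u)+c)|_{C^*_0(\G)}\|_{M(C^*_0(\G))}\geq\|\pi_0(\Theta(u))\|$ for every orbit base-point $\pi_0$; taking the supremum over orbits recovers $\|\varphi_0(\sigma)\|_{M(C^*_0(\G))}$, which yields the contractivity and hence the desired $*$-homomorphism extension.
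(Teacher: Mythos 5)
Your argument is correct and follows essentially the same route as the paper: defining $\varphi_0=\iota\circ\Theta$ on $\Sigma^0_c(\G)$ and bounding $\|\pi(\Theta(u))\|$ by exploiting the degree-zero homogeneity of $\Theta(u)$ along the dilations $\delta_\lambda$ together with the decay of the Schwartz-class difference $u-\Theta(u)$ as one pushes along a dilation orbit, then taking the supremum over $\pi$ and infimum over representatives. The one place where the paper is more concrete is the decay step, which it proves by the nilpotent Plancherel formula, writing $\|\pi\circ\delta_\lambda(\beta)\|^2\leq\int_{\tsp\delta_\lambda(\mathcal{O}_\pi)}\widehat{\beta^*\ast\beta}\to 0$ directly from rapid decay of $\widehat{\beta^*\ast\beta}$ over the dilated coadjoint orbit; you instead appeal to an abstract Riemann--Lebesgue property of $C^*(\G)$, which is valid but, since $\widehat{\G}\setminus M$ is only $T_0$, would require the short additional remark that every dilation orbit does escape all quasi-compact subsets as $\lambda\to+\infty$ (true because $\tsp\delta_\lambda$ pushes every nonzero coadjoint orbit off any bounded neighbourhood of $0$ in $\g^*$). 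So this is a presentational difference rather than a gap of substance.
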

\begin{proof}
It remains to show that this composition is continuous but this is a consequence of the previous proposition since for $u \in S^0_c(\G), x \in M, \pi \in \hat{\G_x}$ we have:
$$\|\pi(\Theta(u)_x)\| \leq \|u\|_{L^2(\G)}.$$
Indeed, $\beta := u-\Theta(u) \in \mathscr{S}(\G,\hd)$ and by the Plancherel formula:
$$\|\pi\circ\delta_{\lambda}(\beta)\| \leq \tr(\pi\circ\delta_{\lambda}(\beta^*\ast\beta))\| = \int_{\tsp\delta_{\lambda}(\mathcal{O}_{\pi})}\widehat{\beta^*\ast\beta}(x,\xi)\diff \xi \xrightarrow[\lambda \to +\infty]{} 0.$$
The Plancherel formula can be found in \cite{CorwinGreenleaf}, here $\mathcal{O}_{\pi}$ denotes the coadjoint orbit in $\Lie(\G_x)^*$ corresponding to the representation $\pi$ through Kirillov's orbit method. Since $\Theta(u)$ is homogeneous of degree $0$ we have $\|\pi(\Theta(u)_x)\| = \|\pi\circ\delta_{\lambda}(\Theta(u)_x)\|$ for every $\lambda > 0$. We thus get $\|\pi(\Theta(u)_x)\| \leq \lim_{\lambda\to +\infty} \|\pi(u_x)\| \leq \|u\|$.
\end{proof}

In the same way, let us denote by $u_t$ the extension as an element of $M(C^*_0(\G))$ of $\Theta(\Delta_0^{\ii t/m})$. Since $\Theta$ preserves the adjoints then $u_t$ is an unitary multiplier and since it preserves the convolution then $u_tu_s = u_{t+s}$ and we get the relation \eqref{crossedprod}. The family \(\Delta_0^{\ii t/m}, t\in\R\) being continuous, the family of unitary multipliers is strongly continuous.
We have thus constructed from $\varphi$ and $(u_t)_{t\in \R}$ a *-homomorphism $\Phi \colon \Sigma(\G)\rtimes \R \to C^*_0(\G)$ by universal property of the crossed product. The rest of this section is devoted to the proof that $\Phi$ is an isomorphism.

To show that $\Phi$ is an isomorphism, we need a better understanding of the representations of $\Sigma(\G)$. This can be done through Pedersen's stratification of \(\G\) defined in Section \ref{Section:Stratification for Family}. It is however more convenient to do it for a single group since the stratification is better known in this case. We thus reduce our study to a single graded Lie group. This follows from the following result:

\begin{proposition}\label{ChpContMorph}The algebras $\Sigma(\G)$ and $C^*_0(\G)$ are bundles of $C^*$-algebras over $M$ with respective fibers at $x \in M$ equal to $\Sigma(\G_x)$ and $C^*_0(\G_x)$. Moreover the action of $\R$ on $\Sigma(\G)$ preserves the fibers and $\Phi$ restricts to a map from $\Sigma(\G_x)\rtimes \R$ to $C^*_0(\G_x)$.
\end{proposition}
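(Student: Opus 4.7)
The plan is as follows. The first two assertions are essentially restatements of prior results: that \(C^*_0(\G)\) is a continuous field with fiber \(C^*_0(\G_x)\) is Corollary \ref{ContFieldCstarZero}, and the corresponding statement for \(\Sigma(\G)\) is the proposition immediately preceding this one. So the real content is (i) that the \(\R\)-action on \(\Sigma(\G)\) is \(\CCC_0(M)\)-linear, hence descends to an action on each fiber \(\Sigma(\G_x)\), and (ii) that the morphism \(\Phi\) built from \(\varphi\) and \((u_t)_{t\in \R}\) is itself \(\CCC_0(M)\)-linear, so that it induces a morphism on the fibers of the crossed product.

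For (i), I would observe that the Rockland element \(\Delta_0 \in \Gamma(M,\mathcal{U}(\Lie(\G)))\) is by construction a smooth section, so at each point \(x\in M\) it restricts to an essentially self-adjoint, non-negative Rockland element \(\Delta_{0,x}\) on \(\G_x\). The heat-kernel construction of the complex powers in Theorem \ref{ComplexSymbol} is fibered over \(M\): it produces a continuous family of symbols in \(S^{\ii kt}_p(\G)\) whose restriction at each \(x\in M\) realises the complex power \(\Delta_{0,x}^{\ii t/m}\) via functional calculus on \(\G_x\). Consequently \((\Delta_0^{\ii t/m})_{|x} = \Delta_{0,x}^{\ii t/m}\). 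Since the convolution product on \(\Sigma(\G)\) is already fibered, conjugation \(\Ad(\Delta_0^{\ii t/m})\) commutes with the \(\CCC_0(M)\)-action and restricts on each fiber to \(\Ad(\Delta_{0,x}^{\ii t/m})\), which is exactly the \(\R\)-action defining \(\Sigma(\G_x)\rtimes\R\).

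For (ii), I would unpack the construction of \(\Phi\). Taylor's Lemma \ref{Taylor1} produces the homogeneous representative \(v\) from \(\hat u\) pointwise over \(M\), and because symbols are by definition \(\pi\)-fibered distributions, the isomorphism \(\Theta\) of Corollary \ref{QuasiVSExact} sends fiber restrictions to fiber restrictions. Hence both \(\varphi \colon \Sigma(\G)\to M(C^*_0(\G))\) of Proposition \ref{MultSymbols} and the unitary multipliers \(u_t = \Theta(\Delta_0^{\ii t/m})\) are \(\CCC_0(M)\)-linear. Their fiberwise restrictions form a covariant pair for the \(\R\)-action on \(\Sigma(\G_x)\), and by the universal property of the crossed product assemble into a morphism \(\Phi_x \colon \Sigma(\G_x)\rtimes \R \to C^*_0(\G_x)\) which is the fiber restriction of \(\Phi\).

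I do not anticipate a serious technical obstacle: every ingredient in the construction of \(\Phi\) (heat-kernel complex powers, the Taylor representative, the Fourier transform and its inverse, and groupoid convolution) is defined fiberwise, so the \(\CCC_0(M)\)-linearity of both the \(\R\)-action and of \(\Phi\) is a matter of tracking the definitions. The only point requiring mild care is the fibered nature of the complex powers, which however is exactly what Theorem \ref{ComplexSymbol} provides on a smooth family of graded groups over a compact base.
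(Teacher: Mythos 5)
The paper states Proposition~\ref{ChpContMorph} without a proof, treating it as a routine consequence of the fact that every ingredient of the construction ---- the Rockland element $\Delta_0$ and its complex powers from Theorem~\ref{ComplexSymbol}, the Taylor representative and the map $\Theta$ of Corollary~\ref{QuasiVSExact}, and groupoid convolution ---- is defined fiberwise over $M$, together with Corollary~\ref{ContFieldCstarZero} and the last proposition of Section~\ref{Section:Symbols}. Your reconstruction spells out exactly this argument and is correct: the $\CCC_0(M)$-linearity of the $\R$-action and of $\varphi$ and $(u_t)$ gives, via amenability of $\R$ (so that crossed products commute with the quotients defining the fibers), the fiberwise maps $\Phi_x\colon \Sigma(\G_x)\rtimes\R\to C^*_0(\G_x)$.
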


It follows that the map \(\Phi \colon \Sigma(\G)\rtimes \R\to C^*_0(\G)\) is an isomorphism if and only if all the induced maps \(\Sigma(\G)\rtimes \R\to C^*_0(\G)\) are also isomorphisms. These maps correspond to the map \(\Phi\) for the groups \(\G_x, x\in M\). It is thus enough to prove the isomorphism for a single graded group. This reduction to a single group makes the proof clearer. Notice however that the same proof would work for a family of graded groups using the results of Section \ref{Section:Stratification for Family}. The other main t

We can now consider a graded Lie group $G$ and prove that $\Phi \colon \Sigma(G)\rtimes \R \to C^*_0(G)$ is an isomorphism. This reduction to a single group makes the proof clearer. Notice however that the same proof would work for a family of graded groups using the results of Section \ref{Section:Stratification for Family}. The other main result used in the proof, Theorem \ref{FFK}, can also be adapted directly to a family of graded groups. The existence of a homogeneous quasi-norm on a family of graded groups follows from a partition of unity argument.

Since $\vphi$ maps $\Sigma(G)$ to $M(C^*_0(G))$, every non-trivial irreducible representation of $G$ induces a representation of $\Sigma(G)$. Moreover, since the image of $\vphi$ consists of invariant elements under the inhomogeneous $\R^*_+$-action then $\pi\circ \vphi = \delta_{\lambda}(\pi)\circ \vphi$ for all $\lambda > 0$ and $\pi \in \widehat{G}\setminus\{1\}$. The following theorem due to Fermanian-Kammerer and Fischer asserts that these representations of $\Sigma(G)$ are irreducible and allow to recover the whole spectrum  of $\Sigma(G)$.

\begin{theorem}[Fermanian-Kammerer, Fischer \cite{FermanianFischer}, Prop. 5.6]\label{FFK}Let $G$ be a graded Lie group. 
For $\pi \in \widehat{G}\setminus \{1\}$ denote by $[\pi]$ its class in $\faktor{(\widehat{G}\setminus \{1\})}{\R^*_+}$. For every $\pi \in \widehat{G}\setminus \{1\}$, we have $\pi \circ \vphi \in \widehat{\Sigma(G)}$ and the map 
\begin{align*}
R \colon \faktor{(\widehat{G}\setminus \{1\})}{\R^*_+} &\to \widehat{\Sigma(G)} \\
[\pi] &\mapsto \pi\circ \vphi
\end{align*}
is a homeomorphism.
\end{theorem}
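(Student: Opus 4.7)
The plan is to use the Pedersen stratification from Corollary \ref{Stratification} together with Taylor's correspondence (Corollary \ref{QuasiVSExact}) to match $\widehat{\Sigma(G)}$ with $(\widehat{G}\setminus\{1\})/\R^*_+$ one stratum at a time. First I would check that $R$ is well-defined: for $\pi \in \widehat{G}\setminus\{1\}$ the representation $\pi$ does not annihilate $C^*_0(G)$, so it extends strictly continuously to $M(C^*_0(G))$, and $\pi \circ \vphi$ is a genuine $*$-representation of $\Sigma(G)$ on $\mathcal{H}_\pi$. Moreover, since $\vphi(\sigma)=\Theta(u)$ is homogeneous of degree $0$, the natural intertwiner between $\pi$ and $\pi\circ\delta_{\lambda}$ yields $\pi(\vphi(\sigma)) = (\pi\circ\delta_\lambda)(\vphi(\sigma))$ for every $\lambda>0$, so the resulting representation depends only on $[\pi]$.

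Next I would prove irreducibility and injectivity. For $\pi$ non-trivial, $\pi(C^*_0(G))$ equals $\mathcal{K}(\mathcal{H}_\pi)$ (or $\CC$ in the one-dimensional case). Any $T \in B(\mathcal{H}_\pi)$ commuting with every $\pi(\vphi(\sigma))$ can, by the asymptotic $\lambda\to+\infty$ argument of Proposition \ref{MultSymbols} applied to Schwartz representatives, be shown to commute with $\pi(\mathscr{S}(G,\hd))$ and hence with all of $\pi(C^*(G))$, forcing $T=\lambda\Id$. Injectivity of $R$ then amounts to Kirillov's theorem: two irreducible representations give the same representation of $\Sigma(G)$ only if their coadjoint orbits coincide up to the $\R^*_+$-dilation on $\g^*$, i.e.\ $[\pi_1]=[\pi_2]$.

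The core of the argument is surjectivity together with the identification of the topology. I would lift the filtration $\{0\} = J_0 \triangleleft \cdots \triangleleft J_d = C^*_0(G)$ from Corollary \ref{SubquotientsCompact}, with $J_k/J_{k-1}\cong \CCC_0(\Lambda_k,\mathcal{K}_k)$, to a filtration $\{0\}=\Sigma_0\triangleleft\cdots\triangleleft\Sigma_d=\Sigma(G)$ by pulling back through $\vphi$ and the multiplier correspondence. Because $\vphi(\Sigma(G))$ consists of $\delta_\lambda$-invariant multipliers, each subquotient $\Sigma_k/\Sigma_{k-1}$ embeds into the $\R^*_+$-fixed part of $M(\CCC_0(\Lambda_k,\mathcal{K}_k))$. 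Since the action $\R^*_+ \act \Lambda_k$ is free and proper (Corollary \ref{Stratification}), the quotient $\Lambda_k/\R^*_+$ is locally compact Hausdorff, and using a homogeneous quasi-norm to produce a continuous section one identifies each subquotient with $\CCC_0(\Lambda_k/\R^*_+,\mathcal{K}_k)$. Gluing over $k$ yields $\widehat{\Sigma(G)}=\bigsqcup_k \Lambda_k/\R^*_+=(\widehat{G}\setminus\{1\})/\R^*_+$, with the evaluation at $[\pi]$ realizing exactly $R([\pi])$. Continuity of $R$ and $R^{-1}$ then follows because the filtrations on both sides are intertwined by $R$ and correspond to the defining open sets of the Jacobson and quotient topologies.

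The hardest part will be the identification $\Sigma_k/\Sigma_{k-1}\cong \CCC_0(\Lambda_k/\R^*_+,\mathcal{K}_k)$. One must verify that every continuous compactly supported operator-valued section over the orbit cross-section $\{|\pi|=1\}\cap \Lambda_k$ actually comes from a principal symbol of order $0$, and conversely that a symbol whose image vanishes on this cross-section necessarily lies in $\Sigma_{k-1}$. This is essentially the generalized fixed-point description of $\R^*_+$-invariant multipliers of a trivial continuous-trace $C^*$-algebra carrying a free proper action, and it is the place where the homogeneous quasi-norm and its associated spherical section play a decisive role.
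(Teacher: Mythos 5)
Your proposal takes a genuinely different route from the paper, and it has two genuine gaps.

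\textbf{Different route.} The paper does not prove Theorem~\ref{FFK} via the stratification at all. Injectivity is proved by a Hebisch-type construction of a Schwartz function $f\in\mathscr{S}_0(G)$ that vanishes on the closure of one $\R^*_+$-orbit in $\widehat G\setminus\{1\}$ and is positive left-invertible elsewhere, followed by averaging $\sigma=\int_0^\infty\delta_{\lambda*}f\,\frac{\diff\lambda}{\lambda}$ to get an order-$0$ symbol separating the two representations. Surjectivity is proved by constructing an explicit \emph{right inverse} $\rho\mapsto[\pi_\rho]$: using a homogeneous quasi-norm, Taylor's Lemma~\ref{Taylor1} gives a map $f\mapsto\sigma_f$ from $\CCC^\infty_c(G,\hd)$ to $\Sigma^0(G)$, so each $\rho\in\widehat{\Sigma(G)}$ pulls back to a representation $\pi_\rho$ of $C^*(G)$, identified via Kirillov's character formula with the representation of \cite{FermanianFischer} Lemma~5.7. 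The stratification and the identification $\faktor{\Sigma_k}{\Sigma_{k-1}}\cong\CCC_0(\faktor{\Lambda_k}{\R^*_+},\mathcal{K}_k)$ come \emph{later}, in the proof of Theorem~\ref{ProofOfIsom}, and that proof \emph{uses} Theorem~\ref{FFK} (through Corollary~\ref{Corollary:SpectrumSymbolsFamily}) to know the spectrum of $\faktor{\Sigma_k}{\Sigma_{k-1}}$ is $\faktor{\Lambda_k}{\R^*_+}$. Your plan inverts this order, which is essentially the generalized fixed-point-algebra strategy of Ewert that the paper explicitly contrasts itself with; it could in principle work, but requires a substantial independent argument that you have not supplied.

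\textbf{Gap 1: injectivity.} You write that injectivity ``amounts to Kirillov's theorem,'' but Kirillov's theorem only says coadjoint orbits parametrize $\widehat G$; it says nothing about $\widehat{\Sigma(G)}$. Two representations $\pi_1,\pi_2$ of $G$ with distinct orbits might still restrict to the same representation of the subalgebra $\varphi(\Sigma(G))\subset M(C^*_0(G))$ unless you exhibit a symbol that distinguishes them. That exhibit is precisely the nontrivial content — it is why the paper goes through the $T_0$ separation, the Hebisch construction, and the averaging integral. Likewise, the claim that a commutant element $T$ commutes with $\pi(\mathscr{S}(G,\hd))$ by ``the asymptotic $\lambda\to+\infty$ argument of Proposition~\ref{MultSymbols}'' does not follow: that proposition gives a norm inequality $\|\pi(\Theta(u))\|\leq\|u\|$, not an approximation of $\pi(f)$ by elements of $\pi(\varphi(\Sigma(G)))$. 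The observation that $A_\lambda\supset\mathcal{K}(\mathcal{H}_\pi)$ (via a quasi-norm and $\mathscr{S}_0(G,\hd)\to\Sigma(G)$) would give irreducibility more directly, but that is a different argument, not the one you gave, and it still does not give injectivity.

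\textbf{Gap 2: identification of the subquotients without circularity.} Your embedding $\faktor{\Sigma_k}{\Sigma_{k-1}}\hookrightarrow M(\CCC_0(\Lambda_k,\mathcal{K}_k))^{\R^*_+}$ only puts the subquotient inside $\CCC_b(\faktor{\Lambda_k}{\R^*_+},\mathcal{B}(\mathcal{H}_k))$; to conclude it equals $\CCC_0(\faktor{\Lambda_k}{\R^*_+},\mathcal{K}_k)$ you need both that the fibers are exactly $\mathcal{K}(\mathcal{H}_k)$ and that the sections vanish at infinity, and you must establish this \emph{without} already knowing that $\widehat{\Sigma(G)}=\faktor{(\widehat G\setminus\{1\})}{\R^*_+}$. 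The paper's own argument for the fiber simplicity (the first lemma inside the proof of Theorem~\ref{ProofOfIsom}) uses that the spectrum of $\faktor{\Sigma_k}{\Sigma_{k-1}}$ is exactly $\faktor{\Lambda_k}{\R^*_+}$, i.e.\ it already uses Theorem~\ref{FFK}. You flag this step as ``the hardest part,'' which is accurate, but the plan as written leaves it unproven and, as structured, circular.

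Finally, even granting the subquotient identifications as sets, matching the Jacobson topology on $\widehat{\Sigma(G)}$ with the quotient topology on $\faktor{(\widehat G\setminus\{1\})}{\R^*_+}$ is not automatic from the filtration; the paper handles it by exhibiting the explicit continuous two-sided inverse.
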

\begin{proof}Since our definition of the symbol algebra is different than the one of \cite{FermanianFischer} we detail the proof in our context.
Let us prove that $R$ is injective. Let $\pi_1,\pi_2 \in \widehat{G}\setminus\{1\}$ be non trivial unitary irreducible representations of $G$ with $[\pi_1]\neq [\pi_2]$. Since $\faktor{(\widehat{G}\setminus \{1\})}{\R^*_+}$ is a $T_0$ space (it is the spectrum of the algebra $C^*_0(G)\rtimes \R^*_+$) then either $[\pi_1] \notin \overline{\{[\pi_2]\}}$ or $[\pi_2] \notin \overline{\{[\pi_1]\}}$\footnote{This can also be seen by using Pedersen's stratification.}. Without loss of generality we will assume the former. We can use a similar reasoning than the one in \cite{Hebisch} to construct a function $f \in \mathscr{S}_0(G)$ such that for every $\pi \in \overline{\R^*_+\cdot\pi_2}$ we get $\pi(f) = 0$ and for every other unitary irreducible representation $\pi$ we have $\pi(f)$ positive and left invertible. We can recover a symbol from $f$ with $\sigma = \int_{0}^{+\infty}\delta_{\lambda *}f \frac{\diff \lambda}{\lambda}$. Indeed if we take its Fourier transform, we get:
\[\int_{0}^{+\infty} \tsp\diff\delta_{\lambda*}\mathcal{F}(f)\frac{\diff \lambda}{\lambda}.\]
This integral converges absolutely since \(\mathcal{F}(f)\) decays rapidly at infinity and near \(0\). This integral also defines a homogeneous function with respect to the inhomogeneous dilations on \(\g^*\). We can then apply the same reasoning as in the proof of Corollary \ref{QuasiVSExact}. We thus get \(\sigma \in \mathcal{K}^0(G)\) which we identified to \(\Sigma^0_p(G)\).

If $\pi \in \widehat{G}\setminus\{1\}$ we have:
$$\pi\circ\vphi(\sigma) = \int_0^{+\infty}(\lambda\cdot \pi) (f) \frac{\diff \lambda}{\lambda}.$$
Therefore we have $R([\pi_2])(\sigma) = 0$ and $R(\pi_1)(\sigma) \neq 0$ and $R(\pi_1) \neq R(\pi_2)$ (this also shows that the representations obtained through $R$ are all nonzero).

We now prove the surjectivity of $R$. We proceed as in \cite{FermanianFischer} and construct a right inverse. Let $|\cdot |$ be a homogeneous quasi-norm on $G$. Take $f\in \CCC^{\infty}_c(G,\hd)$ and take its (euclidean) Fourier transform $\hat{f}$. Now the function $\tsp\diff\delta_{|\cdot|^{-1}}^*\hat{f}\in \CCC^{\infty}(\g^*\setminus{0})$ is homogeneous of degree 0. 
By virtue of Lemma \ref{Taylor1}, it corresponds to a unique symbol $\sigma_f \in \Sigma^0(G)$. Let $\rho$ be a representation of $\Sigma(G)$, $\pi_{\rho} \colon f\to \rho(\sigma_f)$ extends to a continuous representation of $C^*(G)$. The formula for generalized character of unitary representations of nilpotent Lie groups (see \cite{Kirillov,CorwinGreenleaf}) shows that $\pi_{\rho}$ is the same representation as the one constructed in \cite{FermanianFischer} Lemma 5.7. 
Therefore we have that $\pi_{\rho}$ is a non-trivial unitary irreducible representation of $G$ and the map $\rho \mapsto [\pi_{\rho}]$ is a right inverse to $R$.
Thus far, we have proven that $R$ was a continuous bijection. Its inverse being the map $\rho \mapsto [\pi_{\rho}]$ which is also continuous, $R$ is therefore a homeomorphism.
\end{proof}

\begin{corollary}\label{Corollary:SpectrumSymbolsFamily}
Let \(\G \to M\) be a smooth family of graded groups then the spectrum of $\Sigma(\G)$ is homeomorphic to $\faktor{(\widehat{\G}\setminus M)}{\R^*_+}$ and thus to the double quotient \(\faktor{\left(\faktor{\Lie(\G)^*\setminus M}{\Ad^*(\G)}\right)}{\R^*_+}\).
\end{corollary}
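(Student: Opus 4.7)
The plan is to apply Theorem \ref{FFK} fiberwise and glue using the continuous field structure of Proposition \ref{ChpContMorph}, then conclude the second identification with Kirillov's orbit method.

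First I would define
\[R \colon \faktor{(\widehat{\G}\setminus M)}{\R^*_+} \longrightarrow \widehat{\Sigma(\G)}\]
as follows: for $x\in M$ and $\pi \in \widehat{\G_x}\setminus\{1\}$, send $[\pi]$ to the composition of the fiber-evaluation $\Sigma(\G) \to \Sigma(\G_x)$ (supplied by Proposition \ref{ChpContMorph}) with the representation $R_x([\pi])$ of $\Sigma(\G_x)$ produced by Theorem \ref{FFK} applied to the single graded group $\G_x$. Irreducibility and the fact that the resulting representation of $\Sigma(\G)$ depends only on the $\R^*_+$-orbit of $\pi$ are both immediate from Theorem \ref{FFK}.

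Bijectivity of $R$ reduces to the fiberwise case: since $\Sigma(\G)$ is a $\CCC_0(M)$-algebra, every irreducible representation factors uniquely through one of its fibers $\Sigma(\G_x)$, so that $\widehat{\Sigma(\G)} = \bigsqcup_{x\in M}\widehat{\Sigma(\G_x)}$ as a set; combining with the fiberwise bijections from Theorem \ref{FFK} then yields bijectivity globally.

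The main obstacle is to promote this set-theoretic bijection to a homeomorphism in the hull-kernel topology, which is in general not Hausdorff. I would handle this by matching ideal lattices along the $\R^*_+$-invariant filtration of $\widehat{\G}\setminus M$ by the strata $\Omega_j$ of Corollary \ref{Stratification General family}, on each of which $\R^*_+$ acts freely and properly so that $\faktor{\Omega_j}{\R^*_+}$ is Hausdorff. Using the construction in the proof of Theorem \ref{FFK} of a symbol $\sigma_f$ from $f \in \CCC^{\infty}_c(\G)$ via a homogeneous quasi-norm on the fibers (which exists globally on $\G$ by a partition-of-unity argument), one obtains a corresponding filtration of $\Sigma(\G)$ by $\CCC_0(M)$-ideals whose fiberwise restrictions recover, via Theorem \ref{FFK}, the ideals associated to the $\R^*_+$-invariant open subsets of $\widehat{\G_x}\setminus\{1\}$. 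Matching these ideal lattices on both sides shows that $R$ is a homeomorphism. Finally, Kirillov's orbit method, recalled in Section~1 and known to be $\R^*_+$-equivariant with respect to the dilations, yields fiberwise the homeomorphism $\widehat{\G_x} \cong \faktor{\Lie(\G_x)^*}{\Ad^*(\G_x)}$, which globalizes into the announced double-quotient description.
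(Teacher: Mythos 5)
Your proposal is correct and follows essentially the same route as the paper: reduce to the fibers using the $\CCC_0(M)$-algebra structure from Proposition~\ref{ChpContMorph}, apply Theorem~\ref{FFK} on each fiber, and invoke Kirillov's orbit method for the double-quotient description. The paper leaves the topological matching implicit, whereas you spell out how the ideal lattices line up along the stratification of Corollary~\ref{Stratification General family}; this is a reasonable way to make the one-line argument rigorous, but it is not a different method.
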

\begin{proof}
Thanks to Proposition \ref{ChpContMorph} the result reduces to each fiber over the points of $M$, we can then apply Theorem \ref{FFK}.
\end{proof}

We are now ready to prove the main theorem of this section.

\begin{theorem}\label{ProofOfIsom}
Let $G$ be a graded Lie group, then 
$$\Phi \colon \Sigma(G)\rtimes \R \to C^*_0(G),$$
is an isomorphism of $C^*$-algebras.
\end{theorem}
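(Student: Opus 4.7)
The plan is a filtration argument using Pedersen's stratification that reduces the claim to an explicit crossed-product identification on each subquotient. By Corollary~\ref{SubquotientsCompact} the algebra $C^*_0(G)$ carries a composition series $0 = J_0 \triangleleft \cdots \triangleleft J_d = C^*_0(G)$ with $J_i/J_{i-1}\cong \CCC_0(\Lambda_i,\mathcal{K}_i)$, and each $J_i$ is $\R$-invariant because the stratum $\Lambda_i$ is $\R^*_+$-invariant. Dually, Theorem~\ref{FFK} identifies $\widehat{\Sigma(G)}$ with $(\widehat{G}\setminus\{1\})/\R^*_+$, so the open subsets $V_i/\R^*_+$ define ideals $0 = \Sigma_0 \triangleleft \cdots \triangleleft \Sigma_d = \Sigma(G)$ whose subquotients are $\CCC_0(\Lambda_i/\R^*_+,\mathcal{K}_i)$ (combining Theorem~\ref{FFK} with Lipsman-Rosenberg). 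The action $\Ad(\Delta_0^{\ii t/m})$ is fibrewise inner on each primitive quotient, hence acts trivially on $\widehat{\Sigma(G)}$ and preserves every $\Sigma_i$.

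Next I would check that $\Phi$ respects these filtrations. For $\sigma\in \Sigma_i$, Theorem~\ref{FFK} gives $\pi\circ\varphi(\sigma)=R([\pi])(\sigma)=0$ whenever $\pi\notin V_i$, so $\varphi(\sigma)$ lies in $M(J_i)$ and kills every representation outside $V_i$; the same therefore holds for $\varphi(\sigma)u_t$, so the integrated form $\Phi$ maps $\Sigma_i\rtimes\R$ into $J_i$. Exactness of crossed products by the amenable group $\R$ produces induced morphisms
\[
\Phi_i \colon \CCC_0(\Lambda_i/\R^*_+,\mathcal{K}_i)\rtimes \R \longrightarrow \CCC_0(\Lambda_i,\mathcal{K}_i)
\]
on the subquotients, and by the five lemma applied inductively to the matching short exact sequences, it suffices to show that each $\Phi_i$ is an isomorphism.

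The heart of the proof is this last step. The $\R$-action on $\CCC_0(\Lambda_i/\R^*_+,\mathcal{K}_i)$ is fibrewise inner, implemented over $[\pi]\in \Lambda_i/\R^*_+$ by the strongly continuous family of unitaries $\pi(\Delta_0)^{\ii t/m}$ on a fixed Lipsman-Rosenberg realization space $L^2(\R^{n_i})$. Using a homogeneous quasi-norm $|\cdot|$ as a section for the free and proper $\R^*_+$-action on $\Lambda_i$ gives an identification $\Lambda_i \cong (\Lambda_i/\R^*_+)\times \R^*_+$. Combined with the homogeneity relation $\delta_\lambda^*\Delta_0^{\ii t/m}=\lambda^{\ii t}\Delta_0^{\ii t/m}$, this produces a fibrewise Fourier-type equivalence exhibiting $\CCC_0(\Lambda_i/\R^*_+,\mathcal{K}_i)\rtimes\R$ as $\CCC_0(\Lambda_i,\mathcal{K}_i)$, and a direct computation on the integrated form shows that $\Phi_i$ agrees with this identification.

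The main obstacle is precisely this subquotient identification: matching the intrinsic fibrewise-inner crossed product produced by $\Ad(\Delta_0^{\ii t/m})$ with the concrete $\R^*_+$-equivariant geometry of $\Lambda_i$ coming from the zoom action and the quasi-norm. The smooth (indeed rational) dependence of the Lipsman-Rosenberg models on $[\pi]\in \Lambda_i$, the homogeneity of $\Delta_0$, and the fact (from Nilsen's theorem, invoked after Corollary~\ref{SubquotientsCompact}) that the subquotients are genuine continuous fields over $\Lambda_i$ and $\Lambda_i/\R^*_+$ are all required to glue the pointwise identifications into a global $C^*$-algebra isomorphism, after which the five-lemma induction from $\Phi_1$ up to $\Phi_d=\Phi$ finishes the proof.
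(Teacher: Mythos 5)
Your overall architecture is exactly the paper's: filter both $\Sigma(G)$ and $C^*_0(G)$ by the ideals attached to Pedersen's strata, verify that $\Phi$ respects the filtrations, and climb up with the five lemma. You also correctly flag that everything hinges on identifying the subquotient maps $\Phi_i$. However, two steps that you treat as immediate consequences of cited results in fact require their own arguments, and these are precisely where the paper spends its effort.

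First, the assertion that $\Sigma_i/\Sigma_{i-1}\cong\CCC_0(\Lambda_i/\R^*_+,\mathcal{K}_i)$ "combining Theorem~\ref{FFK} with Lipsman--Rosenberg" is a genuine gap. Theorem~\ref{FFK} gives you the spectrum of $\Sigma(G)$; Lipsman--Rosenberg identifies the field $J_i/J_{i-1}$ over $\Lambda_i$, but says nothing directly about $\Sigma_i/\Sigma_{i-1}$, which is not a subalgebra or restriction of $J_i/J_{i-1}$ (it only maps into $\m(J_i/J_{i-1})$ via $\varphi$). To get the claimed form you still need to identify the fiber $A_\lambda=\lambda(\varphi(\Sigma(G)))\subset\mathcal{B}(\mathcal{H}_i)$. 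The paper does this in two micro-lemmas: $A_\lambda$ is simple because the subquotient has Hausdorff spectrum equal to its base, so each fiber has trivial spectrum and hence no nontrivial (primitive) ideals; and $A_\lambda\supset\mathcal{K}(\mathcal{H}_i)$ because a homogeneous quasi-norm yields a $*$-homomorphism $\mathscr{S}_0(G,\hd)\to\Sigma(G)$ under which, for the quasi-norm-$1$ representative $\lambda$ of an orbit, the composite with the representation $R([\lambda])$ extends $\lambda$ on $C^*_0(G)$, giving $A_\lambda\supset\lambda(C^*(G))=\mathcal{K}(\mathcal{H}_i)$. Neither step appears in your write-up, and without them the subquotient form, the triviality of the field, and the asserted isomorphism type of $\Phi_i$'s target are all unproved.

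Second, calling the $\R$-action on $\Sigma_i/\Sigma_{i-1}$ ``fibrewise inner, implemented over $[\pi]$ by $\pi(\Delta_0)^{\ii t/m}$'' conceals the point that must be checked: $\Delta_0^{\ii t/m}$ is a multiplier of $J_i/J_{i-1}$ that is homogeneous of degree $\ii t$ under the $\R^*_+$-action, hence \emph{not} $\R^*_+$-invariant and not a priori an element of $\m(\Sigma_i/\Sigma_{i-1})$. For the crossed-product trivialization $A\rtimes\R\cong A\otimes\CCC_0(\R)$ you need the action to be genuinely inner, i.e.\ implemented by a strongly continuous one-parameter unitary group in $\m(A)$, not merely an orbitwise conjugation. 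The paper's fix is a central twist: with $\chi_i(\theta,r)=r^m$ in quasi-norm coordinates, the unitary $(\chi_i^{-1}\Delta_0)^{\ii t/m}$ is $\R^*_+$-invariant, lies in $\m(\Sigma_i/\Sigma_{i-1})$, and satisfies $\Ad(\Delta_0^{\ii t/m})=\Ad((\chi_i^{-1}\Delta_0)^{\ii t/m})$ because $\chi_i$ is central. Your homogeneity relation $\delta_\lambda^*\Delta_0^{\ii t/m}=\lambda^{\ii t}\Delta_0^{\ii t/m}$ is exactly what makes this twist harmless, so you clearly have the right idea; but you should display the twisted unitary and verify it lies in the multiplier algebra of the symbol subquotient before invoking the inner-action trivialization.
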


\begin{corollary}Let $\G$ be a smooth family of graded Lie groups, then 
$$\Phi \colon \Sigma(\G)\rtimes \R \to C^*_0(\G),$$
is an isomorphism of $C^*$-algebras.
\end{corollary}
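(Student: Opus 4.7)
The plan is to deduce the corollary from Theorem \ref{ProofOfIsom} via a fibering argument over \(M\). Both sides of \(\Phi\) carry a \(\CCC_0(M)\)-algebra structure: for \(C^*_0(\G)\) this is Corollary \ref{ContFieldCstarZero}, and for \(\Sigma(\G)\) it is the last proposition of Section \ref{Section:Symbols}. A global Rockland operator \(\Delta_0 \in \Gamma(M,\mathcal{U}_m(\Lie(\G)))\) can be produced using a partition of unity on the compact base \(M\) together with local trivialisations of \(\Lie(\G)\). Since \(\Delta_0^{\ii t/m}\in \Sigma^{\ii t}(\G)\) is fibered over \(M\) and its value at \(x\in M\) is \((\Delta_0)_x^{\ii t/m}\), the action \(t \mapsto \Ad(\Delta_0^{\ii t/m})\) is \(\CCC_0(M)\)-linear.

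Next, I would invoke the standard fact that for a \(\CCC_0(M)\)-algebra \(A\) endowed with a strongly continuous, \(\CCC_0(M)\)-linear \(\R\)-action, the crossed product \(A\rtimes \R\) is again a \(\CCC_0(M)\)-algebra whose fiber at \(x\in M\) is \(A_x \rtimes \R\) (for the restricted action); this holds because \(\R\) is amenable, so the crossed product is exact and the evaluation short exact sequences at each fiber pass to the crossed product. Applied to \(A = \Sigma(\G)\), this identifies the fiber of \(\Sigma(\G)\rtimes \R\) at \(x\in M\) with \(\Sigma(\G_x)\rtimes \R\), where the action on \(\Sigma(\G_x)\) is the one induced by \((\Delta_0)_x\).

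Third, I would verify that \(\Phi\) is itself \(\CCC_0(M)\)-linear. This follows because the multiplier \(\vphi(\sigma)\) produced in Proposition \ref{MultSymbols} is constructed from Taylor's representative \(\Theta(\sigma)\), which is a \(\pi\)-fibered distribution on \(\G\); likewise each \(u_t\) is obtained from the fibered distribution \(\Theta(\Delta_0^{\ii t/m})\). Consequently the fiberwise restriction \(\Phi_x \colon \Sigma(\G_x)\rtimes \R \to C^*_0(\G_x)\) coincides with the morphism built in Section \ref{Section:IsomorphismSymbols} for the single graded Lie group \(\G_x\) with the Rockland element \((\Delta_0)_x\). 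By Theorem \ref{ProofOfIsom} each \(\Phi_x\) is an isomorphism, and a \(\CCC_0(M)\)-linear \(*\)-homomorphism between two continuous fields of \(C^*\)-algebras over \(M\) which is bijective on every fiber is itself a \(*\)-isomorphism.

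The main obstacle is the second step: identifying \(\Sigma(\G)\rtimes \R\) as a continuous field over \(M\) with the expected fibers. Everything else is either already in the paper or a direct application of Theorem \ref{ProofOfIsom} fiber by fiber. If one wishes to avoid the general amenable-action statement, one can instead observe directly that the canonical covariant representation \((\vphi, u_t)\) evaluates fiberwise to the covariant representation \((\vphi_x, (u_t)_x)\) of \((\Sigma(\G_x), \R)\); combined with the universal property of the crossed product, this suffices to realise \(\Sigma(\G)\rtimes \R\) as the expected \(\CCC_0(M)\)-algebra and to reduce the corollary to the single-group case.
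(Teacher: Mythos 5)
Your proposal is correct and follows essentially the same route as the paper: the paper reduces to the single-group case via Proposition \ref{ChpContMorph}, which asserts that $\Sigma(\G)$ and $C^*_0(\G)$ are $\CCC_0(M)$-bundles, that the $\R$-action is fiber-preserving, and that $\Phi$ induces fiberwise maps $\Sigma(\G_x)\rtimes\R \to C^*_0(\G_x)$, and then invokes Theorem \ref{ProofOfIsom} fiber by fiber. Your write-up supplies the details the paper leaves implicit (amenability of $\R$ giving the crossed-product fiber identification, and the standard fact that a $\CCC_0(M)$-morphism of continuous fields that is bijective on each fiber is an isomorphism), but the underlying argument is the same.
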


\begin{proof}[Proof of Theorem \ref{ProofOfIsom}]
Let $\empty = V_0 \subset V_1 \subset \cdots \subset V_d = \widehat{G}\setminus \{1\}$ be a fine stratification of $\widehat{G}$ as in Theorem \ref{Stratification}. Recall that every $V_k$ is open and $\R^*_+$-invariant, that the spaces $\Lambda_k := V_k \setminus V_{k-1}$ are Hausdorff and that the action $\R^*_+ \act \Lambda_k$ is free and proper.
This stratification induces a filtration of $\Sigma(G)$ and $C^*_0(G)$ into sequences of increasing ideals:
\begin{align*}
    \{0\} = J_0 \triangleleft J_1 \triangleleft \cdots \triangleleft J_d = C^*_0(G) \\
    \{0\} = \Sigma_0 \triangleleft \Sigma_1 \triangleleft \cdots \triangleleft \Sigma_d = \Sigma(G) 
\end{align*}
with $J_k = \bigcap_{\pi \in \widehat{G}\setminus V_k}\ker(\pi)$ and $\Sigma_k = \bigcap_{\pi \in \widehat{G}\setminus V_k}\ker(\pi\circ \vphi)$. By construction the subsets $\Sigma_k$ are $\R$-invariant and $\Phi$ restricts to maps $\Phi \colon \Sigma_k \rtimes \R \to J_k$.
The spectrum of $\faktor{J_k}{J_{k-1}}$ is $\widehat{\faktor{J_k}{J_{k-1}}} = \widehat{J_k}\setminus \widehat{J_{k-1}} = V_k \setminus V_{k-1} = \Lambda_k$ for every $k$. Recall that the quotient algebra $\faktor{J_k}{J_{k-1}}$ is a continuous field of $C^*$-algebras over its spectrum $\Lambda_k$ and by Corollary \ref{SubquotientsCompact} (see also \cite{CStarNilp}) the fiber at $\lambda \in \Lambda_k$ is $\mathcal{K}(\mathcal{H}_{k})$ where $\mathcal{H}_k$ is a Hilbert space (of dimension $1$ for $k=d$ and of infinite dimension otherwise). Since the action of $\R^*_+$ on $\Lambda_k$ is free and proper, $\faktor{\Lambda_k}{\R^*_+}$ is also a Hausdorff space and each $\faktor{\Sigma_k}{\Sigma_{k-1}}$ is a continuous field of $C^*$-algebras over $\faktor{\Lambda_k}{\R^*_+}$. The fiber at $[\lambda] \in \faktor{\Lambda_k}{\R^*_+}$ is the image of $\lambda \circ \vphi$. It is a sub-algebra of $\mathcal{B}(\mathcal{H}_{k})$, we will denote it by $A_{\lambda}$.

\begin{lemma}
For every $\lambda \in \widehat{G} \setminus \{1\}$, $A_{\lambda}$ is a simple algebra.
\end{lemma}
\begin{proof}
The spectrum of a continuous field of $C^*$-algebra is in bijection with the disjoint union of the spectra of the fibers. Here the spectrum of the continuous field is exactly its base hence the fibers have trivial spectrum and are thus simple. Indeed having a trivial spectrum means having no non-trivial primitive ideal but since every ideal is an intersection of primitive ideals then an algebra with no non-trivial primitive ideals is simple.
\end{proof}

\begin{lemma}
For every $\lambda \in \widehat{G} \setminus \{1\}$, $A_{\lambda}$ contains $\mathcal{K}(\mathcal{H}_{k})$ (where $\lambda \in \Lambda_k$).
\end{lemma}

\begin{proof}
As in the proof of Theorem \ref{FFK}, let us choose a homogeneous quasi-norm on $G$. This induces a $*$-homomorphism $\mathscr{S}_0(G,\hd) \to \Sigma(G)$ and if $\lambda \in \widehat{G}$ is of norm 1 then the previous morphism composed with $\lambda$ seen as a representation of $\Sigma(G)$ extends to $\lambda$ seen as a representation of $C^*_0(G)$. Thus $A_{\lambda}$ contains $\lambda(C^*(G)) = \mathcal{K}(\mathcal{H}_{k})$.
\end{proof}

\begin{corollary}The algebras $\faktor{\Sigma_k}{\Sigma_{k-1}}$ are trivial fields of $C^*$-algebras:
$$\faktor{\Sigma_k}{\Sigma_{k-1}} \cong \CCC_0\left(\faktor{\Lambda_k}{\R^*_+},\mathcal{K}(\mathcal{H}_k)\right).$$
\end{corollary}

\begin{corollary}
For every $k \geq 0$, the action of $\R$ on $\faktor{\Sigma_k}{\Sigma_{k-1}}$ is inner and $\Phi \colon \faktor{\Sigma_k}{\Sigma_{k-1}} \rtimes \R \to \faktor{J_k}{J_{k-1}}$ is an isomorphism.
\end{corollary}
\begin{proof}
We fix a quasi-norm on \(G\), this induces an identification \(\faktor{\Lambda_k}{\R^*_+} \times \R^*_+ \cong \Lambda_k\). Consider the function \(\chi_k \colon \Lambda_k \to \CC\) which through this identification, reads \(\chi_k(\theta,r) = r^m\). We can see \(\chi_k\) as a central unbounded multiplier of \(\faktor{J_k}{J_{k-1}} \cong \CCC_0(\Lambda_k,\mathcal{K}(\mathcal{H}_k))\).

Let us fix \(t\in \R\), we know that \(\Delta_0^{\ii t/m}\) restricts to a multiplier of \(\faktor{J_k}{J_{k-1}}\). We can see \(\Delta_0^{\ii t/m} \in \CCC_b(\Lambda_k,\mathcal{K}(\mathcal{H}_k))\). This function is homogeneous of degree \(\ii t\) so not invariant. 
Consider thus \(\left(\chi_k^{-1}\Delta_0\right)^{\ii t/m} \in \CCC_b(\Lambda_k,\mathcal{K}(\mathcal{H}_k))\), it is a \(\R^*_+\)-invariant function so we can see it as a function on the quotient \(\left(\chi_k^{-1}\Delta_0\right)^{\ii t/m} \in \CCC_b(\faktor{\Lambda_k}{\R^*_+},\mathcal{K}(\mathcal{H}_k))\), i.e. \(\left(\chi_k^{-1}\Delta_0\right)^{\ii t/m} \in \m\left(\faktor{\Sigma_k}{\Sigma_{k-1}}\right)\).

Since \(\chi_k\) is central, we get \(\Ad\left(\Delta_0^{\ii t/m}\right) = \Ad\left(\left(\chi_k^{-1}\Delta_0\right)^{\ii t/m}\right)\) and therefore the action on the subquotient is inner. Because of this the crossed product becomes a tensor product and we get:
\begin{align*}
    \faktor{\Sigma_k}{\Sigma_{k-1}} \rtimes \R &\cong \CCC_b(\faktor{\Lambda_k}{\R^*_+},\mathcal{K}(\mathcal{H}_k)) \otimes \CCC_0(\R_+^*) \\
    &\cong \CCC_0(\faktor{\Lambda_k}{\R^*_+} \times \R^*_+,\mathcal{K}(\mathcal{H}_k)) \\
    &\cong \CCC_0(\Lambda_k,\mathcal{K}(\mathcal{H}_k)) \\
    &\cong \faktor{J_k}{J_{k-1}}.\qedhere
\end{align*}
\end{proof}
We have thus far showed that the maps:
$$\Phi \colon \faktor{\Sigma_k}{\Sigma_{k-1}} \rtimes \R \to \faktor{J_k}{J_{k-1}},$$
were isomorphisms of $C^*$-algebras. We can then use the respective exact sequences to show inductively that each
$$\Phi \colon \Sigma_k \rtimes \R \to J_k,$$
is an isomorphism (starting from $k = 0$). The result for $k = d$ then concludes the proof.
\end{proof}

As a corollary of the proof we obtain a decomposition result for the symbol algebra:

\begin{theorem}Let $G$ be a graded Lie group, $\emptyset = V_0 \subset V_1 \subset \cdots \subset V_d = \widehat{G}\setminus \{1\}$ a Pedersen stratification of its unitary dual. Denote by $\Sigma_d(G)$ the ideal corresponding to the subspace $\faktor{V_d}{\R^*_+}$ of the spectrum $\widehat{\Sigma(G)}$. This gives a nested sequence of ideals:
$$\{0\} = \Sigma_0(G) \triangleleft \Sigma_1(G) \triangleleft \cdots \triangleleft \Sigma_d(G) = \Sigma(G)$$
and we have isomorphisms:
$$\faktor{\Sigma_k(G)}{\Sigma_{k-1}(G)} \cong \CCC\left(\faktor{\Lambda_k}{\R^*_+},\mathcal{K}_k\right).$$
Here $\Lambda_k = V_k\setminus V_{k-1}$, $\faktor{\Lambda_k}{\R^*_+}$ is a locally compact Hausdorff space and $\mathcal{K}_k$ is the algebra of compact operators on a separable Hilbert space (of dimension \(1\) if $k = d$ and infinite dimensional otherwise).
\end{theorem}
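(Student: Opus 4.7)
The plan is straightforward since the decomposition essentially falls out of the analysis performed during the proof of Theorem \ref{ProofOfIsom}. I would organize the argument in three steps.

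First, I would set up the filtration of $\Sigma(G)$. Using Theorem \ref{FFK}, identify $\widehat{\Sigma(G)} \cong \faktor{(\widehat{G}\setminus\{1\})}{\R^*_+}$. The Pedersen filtration $V_0 \subset \cdots \subset V_d$ consists of open, $\R^*_+$-invariant subsets of $\widehat{G}\setminus \{1\}$, so it descends to an open filtration of $\widehat{\Sigma(G)}$ and, by duality, defines a nested sequence of closed ideals $\Sigma_k(G) \triangleleft \Sigma(G)$ with $\widehat{\Sigma_k(G)} \cong \faktor{V_k}{\R^*_+}$. The spectrum of the subquotient $\faktor{\Sigma_k(G)}{\Sigma_{k-1}(G)}$ is then $\faktor{\Lambda_k}{\R^*_+}$, which is locally compact Hausdorff by Corollary \ref{Stratification}. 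By Nilsen's theorem this subquotient is a continuous field of $C^*$-algebras over $\faktor{\Lambda_k}{\R^*_+}$.

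Second, I would identify the fibers of this field. For $[\lambda] \in \faktor{\Lambda_k}{\R^*_+}$, the fiber $A_{\lambda}$ has singleton spectrum and is thus simple (the intersection of its primitive ideals being trivial). Moreover, fixing a homogeneous quasi-norm on $G$ yields a $*$-homomorphism $\mathscr{S}_0(G,\hd) \to \Sigma(G)$ whose composition with $\lambda \circ \varphi$ extends to the full representation $\lambda$ on $C^*_0(G)$; since $\lambda(C^*(G)) = \mathcal{K}(\mathcal{H}_k)$, this yields an inclusion $\mathcal{K}(\mathcal{H}_k) \hookrightarrow A_{\lambda}$. Combined with simplicity, this forces $A_{\lambda} \cong \mathcal{K}(\mathcal{H}_k)$.

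The main obstacle is showing that this continuous field is \emph{trivial}, not merely a field with constant fiber. Here I would exploit the quasi-norm trivialization $\Lambda_k \cong \faktor{\Lambda_k}{\R^*_+} \times \R^*_+$ (which is $\R^*_+$-equivariant for the obvious action on the second factor) together with the triviality $\faktor{J_k}{J_{k-1}} \cong \CCC_0(\Lambda_k, \mathcal{K}_k)$ from Corollary \ref{SubquotientsCompact}. Following the computation carried out in the proof of Theorem \ref{ProofOfIsom}, after replacing the homogeneous multiplier $\Delta_0^{\ii t/m}$ by $(\chi_k^{-1}\Delta_0)^{\ii t/m}$ with $\chi_k$ the central function of homogeneity degree $m$ read off the trivialization, the $\R$-action on $\faktor{\Sigma_k(G)}{\Sigma_{k-1}(G)}$ becomes inner. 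Hence the image of $\varphi$ on the subquotient corresponds precisely to the $\R^*_+$-invariant sections of $\CCC_0(\Lambda_k,\mathcal{K}_k)$, producing the desired trivial-field isomorphism $\faktor{\Sigma_k(G)}{\Sigma_{k-1}(G)} \cong \CCC_0(\faktor{\Lambda_k}{\R^*_+}, \mathcal{K}_k)$.
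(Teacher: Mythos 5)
Your proposal follows essentially the same route as the paper. The result is stated there as a corollary of the proof of Theorem~\ref{ProofOfIsom}, whose internal lemmas carry out exactly your first two reductions: singleton spectrum implies simplicity of each fiber $A_\lambda$, the quasi-norm-induced $*$-homomorphism $\mathscr{S}_0(G,\hd)\to\Sigma(G)$ gives $\mathcal{K}(\mathcal{H}_k)\subset A_\lambda$, and hence $A_\lambda=\mathcal{K}(\mathcal{H}_k)$ since $\mathcal{K}(\mathcal{H}_k)$ is an ideal of $\mathcal{B}(\mathcal{H}_k)$; the triviality of the continuous field is then the corollary that follows, established via the $\R^*_+$-equivariant quasi-norm splitting $\Lambda_k\cong\faktor{\Lambda_k}{\R^*_+}\times\R^*_+$ and the triviality $\faktor{J_k}{J_{k-1}}\cong\CCC_0(\Lambda_k,\mathcal{K}_k)$.

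Two minor corrections in your final step. First, the phrase ``the $\R^*_+$-invariant sections of $\CCC_0(\Lambda_k,\mathcal{K}_k)$'' is not literally what you want: a nonzero $\R^*_+$-invariant function cannot vanish at infinity along $\R^*_+$-orbits, so that set would be $\{0\}$. You mean the $\R^*_+$-invariant elements of $\m\bigl(\CCC_0(\Lambda_k,\mathcal{K}_k)\bigr)$ whose values lie in $\mathcal{K}_k$ (that is what your second step ensures) and which vanish at infinity in the quotient $\faktor{\Lambda_k}{\R^*_+}$. Second, the inner-ness of the $\R$-action after conjugating $\Delta_0^{\ii t/m}$ by $\chi_k^{\ii t/m}$ is not needed for this theorem; in the paper it belongs to the subsequent corollary computing $\faktor{\Sigma_k}{\Sigma_{k-1}}\rtimes\R$. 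In fact, inner-ness together with $\faktor{\Sigma_k}{\Sigma_{k-1}}\rtimes\R\cong\CCC_0(\Lambda_k,\mathcal{K}_k)$ only gives an isomorphism after tensoring with $\CCC_0(\R)$, which does not by itself recover the trivial-field structure of $\faktor{\Sigma_k}{\Sigma_{k-1}}$. The direct identification via the quasi-norm splitting is both sufficient and the argument the paper intends.
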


This result, along with Corollary \ref{Corollary:SpectrumSymbolsFamily}, Proposition \ref{ChpContMorph} and the extension of Pedersen stratification given in Theorem \ref{Stratification General family} allow us to decompose the algebra of principal symbols in the general case.

\begin{theorem}\label{Thm:GeneralizationEpsteinMelrose}
    Let \(\G \to M\) be a smooth family of graded groups. Consider a Pedersen stratification \(\emptyset = \mathcal{V}_0 \subset \mathcal{V}_1 \subset \cdots \subset \mathcal{V}_r = \widehat{\mathcal{G}}\setminus M\) and let \(\Omega_k = \mathcal{V}_k\setminus\mathcal{V}_{k-1}\) denote the different strata.

    Let \(\Sigma_{k}(\G)\) be the ideal of \(\Sigma(\G)\) corresponding to the subspace \(\faktor{\mathcal{V}_k}{\R^*_+}\) of the spectrum. This gives a nested sequence of ideals:
    \[\{0\} = \Sigma_0(\G) \triangleleft \Sigma_1(\G) \triangleleft \cdots \triangleleft \Sigma_d(\G) = \Sigma(\G)\]
    and we have isomorphisms:
    \[\faktor{\Sigma_k(\G)}{\Sigma_{k-1}(\G)} \cong \CCC_0\left(\faktor{\Omega_k}{\R^*_+},\mathcal{K}_k\right).\]
\end{theorem}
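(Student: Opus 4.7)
The plan is to localize the problem over $M$ using Proposition \ref{ChpContMorph}, then leverage the preceding single-group theorem together with the crossed-product isomorphism from Theorem \ref{ProofOfIsom} to obtain both the subquotient structure and its global triviality. First, I would use Corollary \ref{Corollary:SpectrumSymbolsFamily} to identify $\widehat{\Sigma(\G)}$ with $\faktor{(\widehat{\G}\setminus M)}{\R^*_+}$. The family Pedersen stratification from Theorem \ref{Stratification General family} is $\R^*_+$-equivariant, so the open subsets $\faktor{\mathcal{V}_k}{\R^*_+}$ form an increasing filtration of $\widehat{\Sigma(\G)}$. By the Dauns-Hofmann correspondence between open subsets of the spectrum and closed two-sided ideals, this yields the nested ideals $\Sigma_k(\G)$ with $\widehat{\Sigma_k(\G)} = \faktor{\mathcal{V}_k}{\R^*_+}$ and $\widehat{\faktor{\Sigma_k(\G)}{\Sigma_{k-1}(\G)}} = \faktor{\Omega_k}{\R^*_+}$, the latter being locally compact Hausdorff since the $\R^*_+$-action on $\Omega_k$ is free and proper.

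For the identification of the subquotients, my preferred route is to transport the decomposition from $C^*_0(\G)$ to $\Sigma(\G)$ via the isomorphism $\Phi\colon \Sigma(\G)\rtimes \R \isomto C^*_0(\G)$ (the Corollary of Theorem \ref{ProofOfIsom}). Because the Pedersen stratification of $\widehat{\G}\setminus M$ and the induced one on $\widehat{\Sigma(\G)} = \faktor{(\widehat{\G}\setminus M)}{\R^*_+}$ correspond under the spectral crossed-product duality (the $\R^*_+$-action on $\widehat{\G}\setminus M$ is dual to the $\R$-action on $\Sigma(\G)$), the ideals $\Sigma_k(\G)\rtimes \R$ map onto the ideals $J_k \triangleleft C^*_0(\G)$ of Corollary \ref{SubquotientsCompact General family}. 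Consequently,
\[
\faktor{\Sigma_k(\G)}{\Sigma_{k-1}(\G)}\rtimes \R \;\cong\; \faktor{J_k}{J_{k-1}} \;\cong\; \CCC_0(\Omega_k, \mathcal{K}_k).
\]
It remains to remove the crossed product on the left. As in the final step of the proof of Theorem \ref{ProofOfIsom}, one shows that the $\R$-action on $\faktor{\Sigma_k(\G)}{\Sigma_{k-1}(\G)}$ is inner: fixing a homogeneous quasi-norm on the bundle $\G$ (which exists by a partition of unity argument), the symbol $\Delta_0^{\ii t/m}$ differs from an $\R^*_+$-invariant multiplier of the subquotient by a central unbounded multiplier $\chi_k^{\ii t/m}$ encoding the degree of homogeneity. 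The crossed product thus collapses to a tensor product with $\CCC_0(\R)$, and the free-proper identification $\Omega_k \cong \faktor{\Omega_k}{\R^*_+}\times \R^*_+$ from the quasi-norm completes the chain of isomorphisms to give $\faktor{\Sigma_k(\G)}{\Sigma_{k-1}(\G)} \cong \CCC_0(\faktor{\Omega_k}{\R^*_+},\mathcal{K}_k)$.

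The main obstacle I expect is verifying that the argument for inner-ness of the $\R$-action on each subquotient carries over cleanly to the family setting, since the central multiplier $\chi_k$ was defined using a quasi-norm on a single graded group. The existence of a global homogeneous quasi-norm on $\G$ (which is straightforward by partition of unity, as noted after Theorem \ref{ProofOfIsom}), combined with the fact that $\Sigma(\G)$ and $C^*_0(\G)$ are $\CCC_0(M)$-algebras whose fiberwise structure is precisely the single-group one via Proposition \ref{ChpContMorph}, makes this transfer essentially mechanical — one can also, as a sanity check, verify the result fiber by fiber by combining the single-group theorem with the continuous-field structure, using that the spectrum of each subquotient is the total space $\faktor{\Omega_k}{\R^*_+}$ rather than just its image in $M$.
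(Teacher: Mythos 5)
Your proposal is correct in outline and would give the result, but your preferred route is both heavier and more restrictive than the paper's. The paper does not pass through the crossed-product isomorphism $\Phi\colon\Sigma(\G)\rtimes\R\isomto C^*_0(\G)$ at all: it derives the single-group decomposition directly as a corollary \emph{of the proof} of Theorem~\ref{ProofOfIsom}, but the piece it actually uses is the pair of lemmas showing each fiber $A_\lambda$ of the continuous field $\faktor{\Sigma_k}{\Sigma_{k-1}}$ is simple and contains $\mathcal{K}(\mathcal{H}_k)$; these lemmas precede and do not invoke either $\Delta_0$ or the crossed product. The family case is then handled purely by localization via Proposition~\ref{ChpContMorph}, Corollary~\ref{Corollary:SpectrumSymbolsFamily}, and the family Pedersen stratification (Theorem~\ref{Stratification General family}), with local trivializations glued as in Corollary~\ref{SubquotientsCompact General family}. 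Your route, by contrast, requires the Corollary of Theorem~\ref{ProofOfIsom}, whose proof needs a globally defined positive Rockland $\Delta_0$ on $\G$ together with its complex powers, and the paper's closing remark of Section~5 explicitly flags that this construction is only available for compact $M$. The theorem you are proving carries no compactness hypothesis, and the same remark states that it holds for non-compact $M$ precisely because the subquotient identifications are local — so the crossed-product route you prefer would need an extra compactness assumption that the statement does not impose. The fiber-by-fiber route you mention at the end ``as a sanity check'' is in fact the paper's main argument, not an alternative to it; I would promote that to the primary proof and use the continuous-field machinery (simplicity plus containment of compacts, then glue) rather than running through $\Phi$.
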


This is a generalization of Epstein and Melrose decomposition of the symbol algebra in the contact case \cite{EpsteinMelrose}. They showed that if $(M,H)$ is a (compact) contact manifold then there is an exact sequence:
$$\xymatrix{0 \ar[r] & \CCC(M,\mathcal{K}\oplus \mathcal{K}) \ar[r] & \Sigma(T_HM) \ar[r] & \CCC(\mathbb{S}H^*)\ar[r] & 0.}$$
Regarding our result this corresponds to the Pedersen stratification of the Heisenberg group associated to $H_x$ for each $x \in M$. Recall that this consists of $\Lambda_1 = \R^*$ and $\Lambda_2 = H_x^*$ and the $\R^*_+$-action on each component is a regular dilation.

\begin{example}The examples given by the abelian case and the Heisenberg groups might be misleading on one point. The strata $\faktor{\Lambda_k}{\R^*_+}$ might not be compact, as shown in the following example.Consider the Engel group whose Lie algebra $\g$ is generated by $X,Y,Z,T$ with $[X,Y] = Z, [X,Z] = T$ and the other brackets being zero. This is a 3-step nilpotent Lie group. Its representation theory can be computed through Kirillov's theory \cite{Kirillov} and its Pedersen stratification gives for the symbol algebra:
$$\faktor{\Lambda_1}{\R^*_+} \cong \R \sqcup \R ; \faktor{\Lambda_2}{\R^*_+} \cong \{pt\} \sqcup \{pt\} ; \faktor{\Lambda_3}{\R^*_+} \cong \mathbb{S}^1.$$
The first strata is therefore non-compact.
\end{example}

\begin{remmark}
    Throughout this section, we have used that \(M\) was compact to use some uniform boundedness arguments. Even though the isomorphism happens over each fiber of \(M\), it is not clear if the operator \(\Delta_0\) could be chosen with a good behaviour at infinity in \(M\) so that its complex powers could be defined (they can be defined locally but we have no way to ensure that they stay bounded at infinity in \(M\)). This proof could be adapted if \(M\) had a reasonable enough compactification (e.g. with a boundary or corners) but I don't know if it would hold in the case of any non-compact manifold.

    This is however not a problem in the decomposition of the symbol algebra. Indeed the identifications of the subquotients can be done locally so Theorem \ref{Thm:GeneralizationEpsteinMelrose} also holds when \(M\) is non-compact.
\end{remmark}

\section{Calculus on filtered manifolds}

Let us now go back to the case of a filtered manifold $(M,H)$. There is an analog of Connes' tangent groupoid \cite{connes1994noncommutative} denoted by $\T_HM$. Algebraically it has the form:
$$M \times M \times \R^* \sqcup T_HM \times \{0\} \rr M \times \{0\}.$$
Its algebroid is given by the following algebra of sections:
$$\Gamma(\Lie(\T_HM))  = \{X \in \mathfrak{X}(M \times \R) \ / \ \forall k \geq 0, \partial_t^k X_{|t = 0} \in \Gamma(H^k)\}$$
which is almost injective hence integrable by a theorem of Debord \cite{debord2000groupoides}. The smooth structure is detailed in \cite{van_erp_2017,choi_2019,mohsen2018deformation}. Recall that $\T_HM$ is endowed with the zoom action $\alpha$ of $\R^*_+$ :
\begin{align*}
\alpha_{\lambda}(x,y,t) &= (x,y,\lambda^{-1}t)\\
\alpha_{\lambda}(x,\xi,0) &= (x,\delta_{\lambda}(\xi),0)
\end{align*}
where $\delta$ is the inhomogeneous action of $\R^*_+$ on $T_HM$ used in the previous section. We briefly recall the definition of pseudodifferential operators in this context, following \cite{van_erpyunken}:

\begin{definition}A pseudodifferential operator in the filtered calculus on $M$ of order $m\in \CC$ is a properly supported, \(r\)-fibered distribution $P \in \mathcal{D}'(M\times M, \hd)$ such that there exists $\PP \in \mathcal{D}'(\T_HM,\hd)$ with the following properties:
\begin{itemize}
\item $\PP$ is properly supported and \(r\)-fibered.
\item $\PP_1 = P$ where $\PP_t = \ev_{t*}\PP$ and $\ev_t$ is the evaluation map on the fiber at time $t \in \R$ on $\T_HM$.
\item $\PP$ is quasi-homogeneous w.r.t the zoom action i.e. 
$$\forall \lambda > 0, \alpha_{\lambda *} \PP - \lambda^m \PP \in \CCC^{\infty}_p(\T_HM,\hd).$$
\end{itemize}
We denote by $\Psi^m_H(M)$ the set of such distributions and $\bbpsi^m_H(M)$ the set of their extensions $\PP$ to $\T_HM$.
\end{definition}

In this definition we assumed the distribution to be fibered with respect to the range map \(r\) of the tangent groupoid. It is proved in \cite{van_erpyunken} that these distributions are also automatically fibered for the source map.

From this setup one can derive the usual properties of a pseudodifferential calculus, namely the algebra structure, the existence of a symbol map, ellipticity criterion, parametrices... The symbol of an operator $P\in \Psi^m_H(M)$ is here obtained by extending it to some quasi-homogeneous $\PP$ and considering $\PP_0 \in S^m_c(T_HM)$. This will depend on the choice of $\PP$ but the class $[\PP_0] \in \Sigma^m_c(T_HM)$ only depends on $P$. We get the exact sequence:
$$\xymatrix{0 \ar[r] & \Psi^{m-1}_H(M) \ar[r] & \Psi^m_H(M) \ar[r] & \Sigma^m_c(T_HM) \ar[r] & 0}.$$
As for the symbols, operators of non-positive order extend to bounded operators on $L^2(M)$ and negative order operators to compact operators. Let us denote by $\Psi^*_H(M)$ the closure of $\Psi^0_H(M)$ in $\mathcal{B}(L^2(M))$. The previous exact sequence for $m = 0$ extends to the following one:
$$\xymatrix{0 \ar[r] & \mathcal{K}(L^2(M)) \ar[r] & \Psi^*_H(M) \ar[r] & \Sigma(T_HM) \ar[r] & 0}.$$
The usual ellipticity condition is here replaced by the Rockland condition on symbols, we will say that a pseudodifferential operator satisfies the Rockland condition if its symbol does. Van Erp and Yuncken proved in \cite{van_erpyunken} that this condition gives the existence of a parametrix for the operator in this calculus: if $P \in \Psi^m_H(M)$ is Rockland if and only if there exists $Q \in \Psi^{-m}_H(M)$ such that $PQ-1,QP-1 \in \CCC^{\infty}(M\times M,\hd)$.
The last thing we will need is the existence of complex powers for some positive operators:

\begin{theorem}[Dave, Haller \cite{DaveHeat}, Thm. 2]\label{ComplexOp} Let $M$ be a compact filtered manifold. Let $P$ be a differential operator of positive even order $d$ which satisfies the Rockland condition and is positive. Then the complex powers $P^z, z \in \CC,$ obtained through functional calculus are pseudodifferential operators of respective order $dz$. Moreover these operators form a holomorphic family of pseudodifferential operators.
\end{theorem}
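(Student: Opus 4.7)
The plan is to follow the Seeley–Nagel–Stein strategy adapted to the filtered setting: realize $P^z$ first through the heat semigroup by a Mellin transform, and then use the parabolic tangent groupoid to show that the resulting operators sit in the filtered calculus at the correct order. After a harmless shift $P \rightsquigarrow P + c$ (and subtraction of the zero mode of $P$), we may assume that $P$ is strictly positive on $L^2(M)$ so that the functional calculus gives a well defined $P^z$ via the contour formula $P^z = \frac{1}{2\pi \ii}\int_{\Gamma} \lambda^z(\lambda - P)^{-1}\diff\lambda$ for $\Re(z) < 0$, and equivalently $P^{-z} = \frac{1}{\Gamma(z)}\int_0^{\infty} t^{z-1} e^{-tP}\diff t$ for $\Re(z)> 0$.

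The first step is to construct the heat kernel $e^{-tP}$ as a family of operators that fits inside the filtered calculus on $M$ together with the parabolic rescaling $t \mapsto \lambda^d t$. This is done on an enlarged tangent groupoid, namely $\T_H M \times \R_+$ with the modified zoom $\alpha_\lambda(\cdot,t) = (\alpha_\lambda(\cdot), \lambda^d t)$; in this parabolic geometry, $\partial_t + P$ becomes a Rockland element of order $d$. Theorem \ref{RocklandEq}, applied in this enlarged calculus, produces a parametrix $Q$ for $\partial_t + P$ with a smoothing remainder, and a standard Volterra iteration upgrades this parametrix to an honest heat semigroup $t \mapsto e^{-tP}$ whose Schwartz kernel on $M \times M$ extends to a quasi-homogeneous distribution on the parabolic tangent groupoid. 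One extracts from this the classical Gaussian/Folland--Stein type bounds on $e^{-tP}$ and its derivatives, together with exponential decay as $t \to \infty$ coming from the spectral lower bound.

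Next, one feeds these estimates into the Mellin formula. For $\Re(z)$ sufficiently negative, $P^{z/d} = \frac{1}{\Gamma(-z/d)}\int_0^\infty t^{-z/d -1} e^{-tP}\diff t$ converges as a distributional integral on $M \times M$, and the heat kernel bounds imply that the integrand is a polyhomogeneous family of kernels on $\T_H M \times \R_+$ which can be integrated (via the Mellin transform in the parabolic time variable) into a properly supported, quasi-homogeneous distribution on $\T_H M$ of degree $z$. Evaluating at $t=1$ gives an element $P^{z/d} \in \Psi^z_H(M)$ in the sense of the van Erp--Yuncken definition. For general $z \in \CC$ one extends the definition by $P^z = P^k P^{z-k}$ with $k \in \N$ chosen so that $\Re(z-k)$ is as small as needed; the algebra property of the filtered calculus guarantees the result is in $\Psi^{dz}_H(M)$.

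The remaining points are verification items: holomorphy of $z \mapsto P^z$ as a map into the space of properly supported fibered distributions on $\T_H M$ (hence of their symbols and kernels) follows by differentiating the Mellin integral in $z$ under the heat bounds; the multiplicative group law $P^{z_1}P^{z_2} = P^{z_1+z_2}$ and the compatibility with the spectral calculus follow because both sides agree as bounded operators on $L^2(M)$ when $\Re(z_i)$ are negative, and extend by analyticity and the semigroup property of $e^{-tP}$. The main obstacle is the first step, namely building the heat kernel \emph{inside} the calculus: this requires the parabolic enlargement of the filtered calculus and sharp pointwise/Schwartz estimates on $e^{-tP}$ that are uniform down to $t = 0$, for which the Rockland condition and the homogeneous structure of the osculating groupoid $T_H M$ are essential.
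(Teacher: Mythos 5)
You should be aware that the paper does not prove this statement: it is stated as a citation, attributed to Dave and Haller \cite{DaveHeat}, Theorem~2, and used as a black box (its only role here is to supply the continuous unitary family $\Delta^{\ii t/q}$ that defines the $\R$-action on $\Psi^*_H(M)$). There is therefore no ``paper's own proof'' to compare against. That said, your sketch is a fair summary of the route Dave and Haller actually take: construct the heat semigroup $e^{-tP}$ inside a parabolic (Volterra-type) enlargement of the filtered calculus via a parametrix for $\partial_t+P$ and a Levi/Volterra iteration, obtain kernel and decay estimates uniform down to $t=0$, then define $P^z$ by the Mellin formula and extend to all $z\in\CC$ by composing with integer powers; holomorphy and the group law come from differentiating the Mellin integral and from the semigroup property. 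Two small cautions if you were to flesh this out: Theorem~\ref{RocklandEq} as stated concerns group-bundle symbols and is not literally the Volterra parametrix theorem you need — one must establish a separate Rockland-type invertibility result in the parabolic calculus, where $\partial_t$ is only invertible because of the causal (Volterra) support condition, not because of ellipticity; and the ``harmless shift'' $P\rightsquigarrow P+c$ should be handled carefully so that it does not leave the class of \emph{differential} Rockland operators of even homogeneous order, though since one only needs strict positivity on the orthogonal complement of $\ker P$ this is routine. These are details, not gaps, and the strategy you propose is the one in the cited source.
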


\section{The Schwartz algebra}

We now define the Schwartz algebra $\mathscr{S}(\T_HM,\hd)$ of the tangent groupoid. Our approach is analogous to the one of Carillo-Rouse in the unfiltered case \cite{CRSchwartz}. A similar approach has been used by Ewert in \cite{Ewert} but she used the coordinates of Choi and Ponge \cite{choi_2019} while we will use the ones of Van Erp and Yuncken \cite{van_erp_2017} (from which we take some terminology for the constructions in this section). Both approaches yield the same algebra. The idea is to define the algebra on exponential charts and show that these algebras can be glued on the whole tangent groupoid. This Schwartz algebra will essentially contain two kinds of functions. The first are the smooth functions on the open subgroupoid $\T_HM_{|\R^*}$ which are compactly supported at each $t \neq 0$ (on a compact subset of \(M\times M\) independent of \(t\)) and have a Schwartz decay at infinity and $0$. We denote this space by:
\[\mathscr{S}(\R^*,\CCC^{\infty}_c(M\times M)).\]
The Schwartz decay condition is then relative to all the semi-norms that define the Fréchet structure on \(\CCC^{\infty}_c(M\times M)\).

The other kind of functions are the Schwartz type functions on exponential charts (in particular their restriction on $T_HM$ will be Schwartz). In order to describe these last functions, we fix $\nabla$ a graded connection on $\gt_HM$, compatible with the dilations and a splitting $\psi \colon \gt_HM \to TM$\footnote{This means that for every $j$, $\psi\colon \faktor{H^j}{H^{j-1}}\to H^j$ is a section of the quotient map.}. Recall that this induces a vector bundle isomorphism 
\[\Psi \colon \gt_HM\times \R \to \Lie(\T_HM).\] 
We fix $\mathcal{U} \subset \gt_HM$ a domain of injectivity, this means that the composition of \(\psi\) with the groupoid exponential 
\[\exp^{\psi\circ \nabla \circ \psi^{-1}} \circ \psi \colon \mathcal{U} \to M \times M,\]
is well defined and injective.
This gives us an open subset 
\[\mathbb{U} := \left\lbrace (x,\xi,t) \in \gt_HM \times \R, (x,\delta_t(\xi))\in \mathcal{U}\right\rbrace\]
which contains both \(\gt_HM\times \{0\}\) and the whole zero section of \(\gt_HM \times \R\).
On this set we can define extend the previous exponential map to an exponential map for the groupoid \(\T_HM\). We define it as:
\begin{align*}
\exp^{\nabla,\psi}\circ \Psi \colon &(x,\xi,t) \mapsto \left(\exp^{\psi \circ \nabla \circ \psi^{-1}}(x,\psi(\delta_t(\xi)),t\right) \\
						            &(x,\xi,0) \mapsto (x,\xi,0).
\end{align*}
Then $\exp^{\nabla,\psi} \circ \Psi \colon \mathbb{U} \to \T_HM$ is a well defined diffeomorphism onto its image $\mathbb{V}$. We have $\mathbb{V} = (T_HM \times \{0\}) \sqcup (\exp^{\psi\circ\nabla\circ\psi^{-1}}(\mathcal{U}) \times \R^*)$ so $\mathbb{V}$ is both a neighborhood of the unit section of $\T_HM$ and of the zero-fiber $T_HM$. We thus have that $\T_HM = \mathbb{V} \bigcup (M\times M \times \R^*)$. We define the Schwartz space on $\mathbb{V}$ by defining one on $\mathbb{U}$ and pushing it forward with the exponential map.

\begin{definition} A function $f \in \CCC^{\infty}(\mathbb{U})$ is in $\mathscr{S}(\mathbb{U})$ if there is a compact set $K \subset \mathcal{U}$ and $T > 0$ such that if $(x,\delta_t\xi,t) \notin K \times [-T,T]$ then $f(x,\xi,t) = 0$ and also $f$ is Schwartz as a function on the bundle of nilpotent Lie algebras  $\gt_HM \times \R \to M \times \R$.

The space $\mathscr{S}(\mathbb{V})$ is defined by the push-forward of $\mathscr{S}(\mathbb{U})$ by the diffeomorphism \(\exp^{\nabla,\psi}\circ\Psi\).

A function $f \in \CCC^{\infty}(\T_HM)$ is a Schwartz type function if there exists an exponential chart $\exp^{\nabla,\psi}\circ \Psi \colon \mathbb{U} \to \mathbb{V}$ such that 
$$f \in \mathscr{S}(\mathbb{V}) + \mathscr{S}(\R^*,\CCC^{\infty}_c(M\times M)).$$
We denote by $\mathscr{S}(\T_HM)$ the space of such functions.
\end{definition}

Remember that $\delta_0 = 0$ so in particular the whole $\gt_HM$, seen as the zero fiber of $\gt_HM \times \R$, can be included in the support the functions in $\mathscr{S}(\mathbb{U})$. For $t \neq 0$ however the support of $f_t = f(\cdot,\cdot,t)$ is contained in $\delta_t^{-1}(K)$ which is compact. 
In particular a smooth function on $\mathbb{U}$ is Schwartz if and only if it has the aforementioned support condition and is Schwartz at $t = 0$ (asking it on $\gt_HM \times \R$ is redundant).

This also means that if we now consider the push forward of \(f\) on \(\mathbb{V} \subset \T_HM\), its support is included in 
\[(T_HM\times\{0\})\sqcup \exp^{\psi\circ\nabla\circ\psi^{-1}}(K)\times ([-T;T]\setminus \{0\}).\] 
Therefore the support at time \(t \neq 0\) of a function in \(\mathscr{S}(\mathbb{V})\) is compact in \(M\times M\), (and goes to \(0\) at infinity since it vanishes after time \(T\)).

\begin{proposition}Let $f = (f_t)_{t \in \R} \in \mathscr{S}(\T_HM)$ then:
\begin{itemize}
\item $f_0 \in \mathscr{S}(T_HM)$
\item for every $t \neq 0$, $f_t \in \CCC^{\infty}_c(M \times M)$ moreover the support of each $f_t$ is contained in a compact set of $M \times M$ independent of $t$
\item the function $t\mapsto f_t$ has rapid decay when $t$ goes to $\pm \infty$.
\end{itemize}
\end{proposition}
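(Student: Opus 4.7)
The proof is essentially a matter of unpacking the definitions: the Schwartz space $\mathscr{S}(\T_HM)$ has been defined as a sum $\mathscr{S}(\mathbb{V}) + \mathscr{S}(\R^*,\CCC^\infty_c(M\times M))$, so the plan is to write any $f \in \mathscr{S}(\T_HM)$ as $f = g + h$ with $g \in \mathscr{S}(\mathbb{V})$ and $h \in \mathscr{S}(\R^*,\CCC^\infty_c(M\times M))$, and verify each of the three properties separately on each summand.

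For the summand $h$ the three claims are immediate from the definition of $\mathscr{S}(\R^*,\CCC^\infty_c(M\times M))$: such an $h$ is supported away from $t=0$ (so $h_0 = 0 \in \mathscr{S}(T_HM)$), each $h_t$ lies in $\CCC^\infty_c(M\times M)$ with support in a common compact subset of $M\times M$, and $t\mapsto h_t$ has Schwartz decay at $\pm\infty$ by construction (including decay with respect to every seminorm of the Fréchet space $\CCC^\infty_c(M\times M)$).

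For the summand $g \in \mathscr{S}(\mathbb{V})$, write $g = (\exp^{\nabla,\psi}\circ\Psi)_*\tilde g$ with $\tilde g \in \mathscr{S}(\mathbb{U})$, and let $K \subset \mathcal{U}$ and $T>0$ be as in the definition of $\mathscr{S}(\mathbb{U})$. At $t=0$ the exponential map $\exp^{\nabla,\psi}\circ\Psi$ is the identity via the bundle isomorphism $\Psi$, so $g_0$ corresponds to $\tilde g_0$, the restriction to the fiber at $0$ of a Schwartz section of the graded vector bundle $\gt_HM\times \R \to M\times \R$; under the identification $T_HM \cong \gt_HM$ this is exactly a Schwartz section of $T_HM$, proving the first item. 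For $t \neq 0$, the support of $\tilde g_t$ on $\gt_HM$ is $\delta_t^{-1}(K)$ and vanishes when $|t| > T$; pushing forward by $\exp^{\psi\circ\nabla\circ\psi^{-1}}\circ\psi\circ\delta_t$ sends it into the fixed compact set $\exp^{\psi\circ\nabla\circ\psi^{-1}}(K) \subset M\times M$, which is independent of $t$, proving the second item. The third item for $g$ is trivial since $g_t = 0$ for $|t| > T$.

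The only step that requires any thought at all is the identification at $t=0$: one must check that the exponential chart degenerates to the identity there (built into the definition of $\exp^{\nabla,\psi}\circ\Psi$ by construction) and that the notion of Schwartz section on $T_HM$ coming from the groupoid side matches the notion on the Lie algebra bundle $\gt_HM$ used in defining $\mathscr{S}(\mathbb{U})$. Both identifications are tautological from the conventions fixed earlier, so no real obstacle arises; the proposition is a compatibility statement rather than a substantive result, and I would present it essentially in the three-line form above.
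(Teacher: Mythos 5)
Your proof is correct and follows the same route as the paper: decompose $f$ into a piece in $\mathscr{S}(\mathbb{V})$ and a piece in $\mathscr{S}(\R^*,\CCC^{\infty}_c(M\times M))$, observe that the exponential chart is the identity at $t=0$ (so $g_0$ is Schwartz on $T_HM$), and use the support condition $\supp(\tilde g)\subset K_T$ to control the $t\neq 0$ fibers. The paper's proof is equally brief, delegating the second and third items to the discussion immediately preceding the proposition.
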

\begin{proof}
For the first point we have $\Psi_{|t = 0} = \Id_{\gt_HM}$ so we recover the definition of the Schwartz class on a graded Lie group bundle. The other points result from the discussion above.
\end{proof}

\begin{theorem}
The definition of $\mathscr{S}(\T_HM)$ does not depend on the choice of connection, splitting and exponential chart. 
That means that for any connection $\nabla$, splitting $\psi$, domain of injectivity $\mathcal{U}$ and corresponding exponential chart $\mathbb{U} \isomto \mathbb{V} \subset \T_HM$ then:
$$\mathscr{S}(\T_HM) = \mathscr{S}(\mathbb{V}) + \mathscr{S}(\R^*,\CCC^{\infty}_c(M\times M)).$$
\end{theorem}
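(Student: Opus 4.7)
The plan is to establish the inclusion $\mathscr{S}(\mathbb{V})\subset\mathscr{S}(\mathbb{V}')+\mathscr{S}(\R^*,\CCC^\infty_c(M\times M))$ for any two sets of chart data, since by symmetry this yields the equality of the two decompositions. Given $f\in\mathscr{S}(\mathbb{V})$, first I localize in $t$: pick a cutoff $\chi\in\CCC^\infty_c(\R)$ equal to $1$ near $0$ with support so small that $\mathbb{V}\cap\{t\in\supp\chi\}\subset\mathbb{V}'$ (possible because both $\mathbb{V}$ and $\mathbb{V}'$ are open neighborhoods of $T_HM\times\{0\}$ in $\T_HM$). Decomposing $f=\chi f+(1-\chi)f$, the piece $(1-\chi)f$ is smooth, supported in $M\times M\times\R^*$ with fiberwise compact support in $M\times M$ coming from the support condition on $f$, and with $t$-support in a bounded subset of $\R^*$; hence $(1-\chi)f\in\mathscr{S}(\R^*,\CCC^\infty_c(M\times M))$, and it remains to prove $\chi f\in\mathscr{S}(\mathbb{V}')$.

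This amounts to showing that the pullback of $\chi f$ by $\exp^{\nabla',\psi'}\circ\Psi'$ lies in $\mathscr{S}(\mathbb{U}')$. Consider the transition diffeomorphism
$$\Phi=(\exp^{\nabla',\psi'}\circ\Psi')^{-1}\circ(\exp^{\nabla,\psi}\circ\Psi),$$
defined on a suitable open neighborhood of $\ttt_HM\times\{0\}$ in $\ttt_HM\times\R$. Because both parametrizations fiber over $M\times\R$ (preserving the basepoint $x$ and time $t$) and both reduce to the identification $T_HM\cong\ttt_HM$ at $t=0$, one has $\Phi(x,\xi,t)=(x,F(x,\xi,t),t)$ with $F$ smooth and $F(x,\xi,0)=\xi$. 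The support condition for $\chi f\circ\Phi^{-1}$ to belong to $\mathscr{S}(\mathbb{U}')$ follows routinely from the compactness in $t$ of $\supp\chi$ and from smoothness of $\Phi$.

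The main obstacle is verifying that the fiberwise Schwartz decay in $\xi$ is preserved. The transparent way to do this is via the rescaled variable $\eta=\delta_t\xi$ (defined for $t\neq 0$), in which $\Phi$ becomes
$$\widetilde\Phi(x,\eta,t)=(x,\delta_t\,F(x,\delta_{t^{-1}}\eta,t),t),$$
i.e. the change of coordinates between two standard exponential charts on $M$ depending smoothly on $t\in\R^*$. The crucial step is the smooth extension of $\widetilde\Phi$ across $t=0$ to the identity; this relies on the smoothness of the tangent groupoid $\T_HM$ at $t=0$ together with the compatibility of $\nabla$ and $\psi$ with the inhomogeneous dilations $\delta_t$. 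Concretely, expanding $F(x,\xi,t)=\xi+tH(x,\xi,t)$ and exploiting the finite Baker--Campbell--Hausdorff formula on each graded nilpotent fiber of $\T_HM$, one shows that $t\,\delta_t H(x,\delta_{t^{-1}}\eta,t)$ and all its $\eta$-derivatives extend to smooth functions in $t$ near $0$, with polynomial bounds in $\eta$. Once $\widetilde\Phi$ is a smooth family of diffeomorphisms depending smoothly on $t\in\R$, standard chain-rule estimates bound every Schwartz seminorm of $\chi f\circ\Phi^{-1}$ in terms of those of $\chi f$, yielding $\chi f\circ\Phi^{-1}\in\mathscr{S}(\mathbb{U}')$ and completing the proof.
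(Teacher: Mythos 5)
There is a genuine gap in the key step of your argument. You claim that the rescaled transition map $\widetilde\Phi$ extends smoothly across $t=0$ to the identity, but this is false. Working out the composition for $t\neq 0$: if $\Lambda_x\colon \gt_{H,x}M \to \gt_{H,x}M$ denotes the (ordinary, $t$-independent) change of chart $\Lambda_x = (\psi')^{-1}\circ\big(\exp^{\psi'\circ\nabla'\circ(\psi')^{-1}}_x\big)^{-1}\circ\exp^{\psi\circ\nabla\circ\psi^{-1}}_x\circ\psi$, then $F(x,\xi,t)=\delta_{t}^{-1}\Lambda_x(\delta_t\xi)$, and therefore
\begin{equation*}
\widetilde\Phi(x,\eta,t)=\big(x,\ \delta_t\, F(x,\delta_{t}^{-1}\eta,t),\ t\big)=\big(x,\ \Lambda_x(\eta),\ t\big)\quad\text{for }t\neq 0.
\end{equation*}
This is constant in $t$ on $\R^*$, whereas at $t=0$ both parametrizations restrict to the canonical identification $T_HM\cong\gt_HM$, so $\widetilde\Phi(x,\eta,0)=(x,\eta,0)$. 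Hence $\widetilde\Phi$ has a jump discontinuity at $t=0$ unless $\Lambda_x=\operatorname{id}$, i.e.\ unless the two charts literally coincide. The smoothness of $\T_HM$ at $t=0$ that you invoke gives smoothness of $\Phi$ in the \emph{unrescaled} variable $\xi$ (i.e.\ $F(x,\xi,t)=\delta_t^{-1}\Lambda_x(\delta_t\xi)$ extends smoothly to $t=0$ with value $\xi$), not of $\widetilde\Phi$; conflating the two is where the proof breaks.

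What actually has to be shown is that the unrescaled map $F$, together with all its $\xi$-derivatives, has at most polynomial growth in $\xi$ uniformly as $t\to 0$ on the support region $\{\delta_t\xi\in K\}$, so that composition preserves the Schwartz seminorms. The mechanism is the graded structure: $d\Lambda_x(0)=(\psi')^{-1}\circ\psi$ is upper triangular with respect to the filtration (identity on the associated graded), so the $\delta_t$-conjugation suppresses the off-diagonal terms; and the higher Taylor terms of $\Lambda_x$ only involve monomials whose graded degree in each output component is at least that component's weight — again by the smooth structure of the tangent groupoid — so that $\delta_t^{-1}\Lambda_x(\delta_t\,\cdot)$ has a smooth limit with polynomially bounded derivatives. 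This is precisely what the paper exploits by identifying (after a cutoff reducing to a common domain of injectivity) the transition with the family $\delta_t^{-1}\circ\psi^{-1}\circ\psi'\circ\delta_t$ of vector bundle automorphisms, which manifestly extends to $t=0$ as the identity and preserves the Schwartz class; your rescaled formulation does not make this structure visible, and the claimed smooth extension of $\widetilde\Phi$ is where you would need to supply a different (and correct) argument.
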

\begin{proof}
Consider another choice of connection, splitting, domain of injectivity and exponential charts written with a prime instead. Up to considering \(\mathbb{V} \cap \mathbb{V}' \) instead of \(\mathbb{V}\) we may assume that $\mathbb{V}' \subset \mathbb{V}$. We can use a function $\vphi$ compactly supported in $\mathbb{V}$ with value $1$ near the zero fiber so that if $f \in  \mathscr{S}(\mathbb{V})$ then $f = \vphi f + (1-\vphi) f$. Since $(1-\vphi) f \in \mathscr{S}(\R^*,\CCC^{\infty}_c(M\times M))$ we can just show that $\vphi f \in \mathscr{S}(\mathbb{V}')$. Therefore we can restrict ourselves to the case where $\mathbb{V}$ and $\mathbb{V}'$ are defined from the same domain of injectivity $\mathcal{U} \subset \gt_HM$ and show that if $\Psi,\Psi'\colon \gt_HM\times \R \isomto \ttt_HM$ are the respective isomorphisms used to construct the exponential charts then:
$$(\Psi^{-1}\circ \Psi')^*(\mathscr{S}(\mathbb{U})) = \mathscr{S}(\mathbb{U}').$$
Notice that $\Psi^{-1}\circ \Psi' = (\delta_t^{-1} \circ \psi^{-1} \circ \psi' \circ \delta_t)_{t \in \R}$, the value at $t = 0$ is $\Id$. It is thus a vector bundle isomorphism of $\gt_HM \times \R$ and therefore preserves the Schwartz class. We now need to show that the condition on the support is preserved under the transformation given by $\Psi^{-1}\circ \Psi'$. For $K \subset \mathcal{U}$ and $T >0$, we define:
$$K_T := \{(x,\xi,t) \in \gt_HM \times \R , t \in [-T;T] \& (x,\delta_t(\xi)) \in K\}.$$
By definition a function in $\mathscr{S}(\mathbb{U})$ has a support contained in a set of the form $K_T$ for some $T > 0 $ and $K \subset \mathcal{U}$ compact. The computation of $\Psi^{-1}\circ\Psi'$ shows directly that:
$$\Psi^{-1}\circ\Psi' (K_T) = (\Psi^{-1}\circ\Psi'(K))_T,$$
hence the condition on the support is preserved by the transformation $\Psi^{-1}\circ\Psi'$.
\[f \in \mathscr{S}(\mathbb{V}) + \mathscr{S}(\R^*,\CCC^{\infty}_c(M\times M)) = \mathscr{S}(\T_HM).\qedhere\]
\end{proof}

In order to deal with convolution we define a Schwartz class for half-densities denoted by $\mathscr{S}(\T_HM,\hd)$. They are the sections of the half-density bundle for which in one (hence any) zoom-invariant trivialization of the half-density bundle\footnote{This means that taking the pullback by $\alpha_{\lambda}$ scales the measure by a factor $\lambda^{-d_H}$ where $d_H$ is the homogeneous dimension of $M$.} they become Schwartz functions as defined before. To see that this definition is not ambiguous, let us consider two different zoom-invariant trivializations. They differ by a multiplication by a positive smooth function. By zoom invariance this function is zoom-invariant. In particular it is bounded and thus preserves the growth conditions at infinity used to define the Schwartz algebra. With similar notations as before, we now have:
\[\mathscr{S}(\T_HM,\hd) = \mathscr{S}(\mathbb{V},\hd) + \mathscr{S}(\R^*,\CCC^{\infty}_c(M\times M,\hd)).\]

Now, because of the Schwartz decay conditions at infinity and near \(0\), we have \(\mathscr{S}(\R^*,\CCC^{\infty}_c(M\times M,\hd)) \subset L^1(\T_HM,\hd)\). Similarly, by the Schwartz decay conditions on \(\mathbb{U}\), we have \(\mathscr{S}(\mathbb{V},\hd)\subset L^1(\T_HM,\hd)\). Thus \(\mathscr{S}(\T_HM, \hd) \subset L^1(\T_HM,\hd)\), in particular we can consider the groupoid convolution on between elements of \(\mathscr{S}(\T_HM,\hd)\). 

\begin{theorem}The groupoid convolution and adjoint extend to the Schwartz space $\mathscr{S}(\T_HM,\hd)$ making it a $*$-algebra. Moreover we have the inclusions:
$$\CCC^{\infty}_c(\T_HM,\hd) \subset \mathscr{S}(\T_HM,\hd) \subset C^*(\T_HM).$$
\end{theorem}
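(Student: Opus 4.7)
The plan is to split the claim into three independent pieces and then tackle the algebra structure via the decomposition $\mathscr{S}(\T_HM,\hd)=\mathscr{S}(\mathbb{V},\hd)+\mathscr{S}(\R^*,\CCC^{\infty}_c(M\times M,\hd))$ furnished by any fixed exponential chart. The inclusion $\CCC^{\infty}_c(\T_HM,\hd)\subset\mathscr{S}(\T_HM,\hd)$ is immediate from the definition: cut any compactly supported smooth section by a function of $t$ that is $1$ on a neighborhood of $0$ and supported in a small interval, putting the $t=0$ piece into $\mathscr{S}(\mathbb{V},\hd)$ and the remainder into $\mathscr{S}(\R^*,\CCC^{\infty}_c)$. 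The inclusion $\mathscr{S}(\T_HM,\hd)\subset C^*(\T_HM)$ follows from the $L^1$-inclusion already noted right before the statement, combined with amenability of $\T_HM$ (its $s$-fibers are, away from $t=0$, the fiber $\{x\}\times M$ of the pair groupoid, and at $t=0$ a nilpotent Lie group), which gives the standard continuous embedding $L^1(\T_HM,\hd)\hookrightarrow C^*(\T_HM)$.

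For the algebra structure I would first dispose of the easy components. Two elements $f,g\in \mathscr{S}(\R^*,\CCC^{\infty}_c(M\times M,\hd))$ convolve at each $t\neq 0$ as smooth compactly supported kernels on the pair groupoid, so smoothness, common compact support in $M\times M$, and rapid decay in $t$ both as $|t|\to\infty$ and $t\to 0$ are preserved by direct estimation. The cross terms between $\mathscr{S}(\mathbb{V},\hd)$ and $\mathscr{S}(\R^*,\CCC^{\infty}_c)$ are supported in $M\times M\times\R^*$ and land back in $\mathscr{S}(\R^*,\CCC^{\infty}_c(M\times M,\hd))$: one factor decays rapidly at $t=0$ while the other has seminorms uniformly bounded on compact $t$-intervals, so the product inherits Schwartz decay at $0$ and at infinity. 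The involution is trivially stable because groupoid inversion preserves both summands of the decomposition.

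The genuine content is convolution of two elements of $\mathscr{S}(\mathbb{V},\hd)$. Pulling back through the diffeomorphism $\exp^{\nabla,\psi}\circ\Psi\colon\mathbb{U}\isomto\mathbb{V}$, one obtains on $\mathbb{U}\subset\gt_HM\times\R$ a deformed convolution $\ast_t$ which is the osculating group convolution at $t=0$ and, for $t\neq 0$, is the conjugate by $\delta_t$ of the pair groupoid convolution on $M\times M$. The key point is that $\ast_t$ varies smoothly with $t$ across $t=0$: this is precisely the smoothness of the groupoid multiplication on $\T_HM$ in exponential coordinates, as constructed in \cite{van_erp_2017,choi_2019,mohsen2018deformation}. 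Granted this, the support of a convolution of two functions supported in sets of the form $K_T$ and $K'_{T'}$ is controlled by a corresponding $K''_{T''}$, because after the $\delta_t$-rescaling the multiplication on the $\xi$ variable is an $O(t)$-perturbation of the graded group law, which is polynomial of controlled degree thanks to the Baker–Campbell–Hausdorff formula. The Schwartz decay in $\xi$ (uniformly in $t$ in a compact interval) is then obtained by a standard seminorm estimate, reducing at $t=0$ to the known fact that $\mathscr{S}(T_HM,\hd)$ is a convolution algebra over the graded group bundle \cite{HomogeneousGroups}.

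The main obstacle I anticipate is verifying that the deformed convolution $\ast_t$ preserves the Schwartz seminorms uniformly as $t\to 0$; the naive formulas in the chart involve the singular rescaling $\delta_t^{-1}$, so one must either invoke the global smoothness of the tangent groupoid multiplication directly or, more quantitatively, expand the product law using BCH and check that every term, after the $\delta_t$-conjugation, stays polynomial in $t$ and smooth in $(x,\xi)$. Once this uniform smoothness is in hand, the Leibniz-type estimates for Schwartz seminorms under convolution on graded nilpotent Lie groups extend by continuity to all $t$ in a compact neighborhood of $0$, which finishes the proof.
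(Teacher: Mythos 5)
Your overall structure matches the paper's: same decomposition $\mathscr{S}(\T_HM,\hd)=\mathscr{S}(\mathbb{V},\hd)+\mathscr{S}(\R^*,\CCC^{\infty}_c(M\times M,\hd))$, same routine handling of the cross terms, and the same two inclusions obtained from the $L^1$ embedding and amenability. The substantive divergence is in the treatment of $\mathscr{S}(\mathbb{V},\hd)\ast\mathscr{S}(\mathbb{V},\hd)$, and here there are two issues.

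First, you do not shrink the chart. The paper begins by choosing $\mathcal{U}'\subset\mathcal{U}$ so that $m_{\T_HM}(\mathbb{V}'\,{}_s\!\!\times_r\mathbb{V}')\subset\mathbb{V}$ and then invokes the chart-independence theorem just proved to decompose $f=f_1+f_2$, $g=g_1+g_2$ with $f_1,g_1\in\mathscr{S}(\mathbb{V}',\hd)$. Without this, your claim that $\supp(f_1\ast g_1)$ is controlled by some $K''_{T''}$ with $K''\subset\mathcal{U}$ simply fails: two elements whose supports approach the boundary of $\mathcal{U}$ can convolve to a distribution supported outside the chart, and then no $K''\subset\mathcal{U}$ exists. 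This is not a cosmetic omission --- it is precisely why the paper passes to $\mathbb{V}'$ before doing any estimates.

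Second, you correctly identify the uniform Schwartz-seminorm estimate for the family of deformed products $\ast_t$ as the crux, but you leave it at the level of ``invoke global smoothness of the groupoid multiplication or expand via BCH'' without carrying either through. The paper sidesteps this quantitative issue entirely: it verifies the support condition via an explicit construction $K\widetilde{\oplus}C$ (compact thanks to the shrinking), verifies the Schwartz condition only at $t=0$ (which, as it notes earlier, suffices once the support condition holds), and handles smoothness by approximating $f_1,g_1$ by elements of $\CCC^{\infty}_c$ and using that $\CCC^{\infty}_c(\T_HM,\hd)$ is a convolution algebra. Your route would, if completed, give more --- continuity in the Fr\'echet topology --- but as written it is a program rather than a proof, and it needs the chart-shrinking step to even get started on the support control.
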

\begin{proof}
It remains to show that the result of the convolution is still in the Schwartz class. Let us fix a splitting $\psi$, an equivariant connection $\nabla$, consider a domain of injectivity $\mathcal{U} \subset \gt_HM$ and let $\mathbb{U} \subset \gt_HM \times \R$ the corresponding open subset and $\exp^{\nabla,\psi} \colon \mathbb{U} \isomto \mathbb{V}\subset \T_HM$ the resulting exponential chart. Since the convolution will force us to consider products in $\T_HM$ we choose an open subset $\mathcal{U}' \subset \mathcal{U}$ such that $m_{\T_HM}(\mathbb{V}' \ {}_s\! \! \times_r  \mathbb{V}') \subset \mathbb{V}$. Let $f,g \in \mathscr{S}(\T_HM,\hd)$, we decompose them as $f = f_1 + f_2, g = g_1 + g_2$ with $f_1,g_1 \in \mathscr{S}(\mathbb{V}',\hd)$ and $f_2,g_2 \in \mathscr{S}(\R^*,\CCC^{\infty}_c(M\times M,\hd))$. We have:
$$f\ast g = f_1\ast g_1 + f_1\ast g_2 + f_2\ast g_1 + f_2\ast g_2,$$
and we easily see that $f_1\ast g_2, f_2\ast g_1, f_2\ast g_2 \in \mathscr{S}(\R^*,\CCC^{\infty}_c(M\times M,\hd)$. We now show that $f_1\ast g_1 \in \mathscr{S}(\mathbb{V},\hd)$. By construction, \(f_1\ast g_1\) has support included in \(\mathbb{V}\) and is Schwartz when \(t=0\) because \(f_{1|t=0}, g_{1|t=0}\) are Schwartz. We then show that \(f_1\ast g_1\) is smooth. We can approximate (in the sense of uniform convergence on every compact subset) both \(f_1\) and \(g_1\) by smooth compactly supported functions in \(\mathbb{V}\). The convolution product of these approximation is smooth with compact support because \(\CCC^{\infty}_c(\T_HM, \hd)\) is an algebra for the convolution, and the product converges to \(f_1\ast g_1\). In particular, \(f_1\ast g_1\) is smooth
Thus the only remaining property to verify is the support condition for $f_1\ast g_1$. Let $K,C \subset \mathcal{U}'$ be compact subsets and $T >0$ such that we have the inclusions $\supp(f_1) \subset \exp^{\nabla,\psi}(K_T), \supp(g_1) \subset \exp^{\nabla,\psi}(C_T)$. Define $K\widetilde{\oplus}C \subset \gt_HM$ as:
\begin{align*}
exp^{\psi\circ\nabla\circ\psi^{-1}}(\psi(K\widetilde{\oplus}C)) = &\left\lbrace \left(\exp^{\psi\circ\nabla\circ\psi^{-1}}_{\exp^{\psi\circ\nabla\circ\psi^{-1}}_x(\psi(\xi))}(\psi(\eta)),x\right)\right. ,\\ 
&\left. (x,\xi) \in K \ \& \ (\exp^{\psi\circ\nabla\circ\psi^{-1}}_x(\psi(\xi)),\eta) \in C  \right\rbrace .
\end{align*}
This set is constructed so that $\exp^{\nabla,\psi}((K\widetilde{\oplus}C)_T)$ would contain all possible products (in $\T_HM$) of elements in $\supp(f_1)$ and $\supp(g_1)$ hence it would contain $\supp(f_1\ast g_1)$. By construction the set $\exp^{\psi\circ\nabla\circ\psi^{-1}}(\psi(K\widetilde{\oplus}C))$ is contained in $\exp^{\nabla,\psi}(\mathbb{U})$ so that $K\widetilde{\oplus}C$ is well defined, compact and hence $f_1\ast g_1 \in \mathscr{S}(\mathbb{V})$.

Now the fact that $\CCC^{\infty}_c(\T_HM,\hd) \subset \mathscr{S}(\T_HM,\hd)$ is rather obvious. Moreover since we also have $\mathscr{S}(\T_HM,\hd) \subset L^1(\T_HM,\hd)$ then every continuous representation of $\T_HM$ extends continuously to $\mathscr{S}(\T_HM,\hd)$ and thus we get the inclusion $\mathscr{S}(\T_HM,\hd) \subset C^*(\T_HM)$.
\end{proof}

\begin{corollary}The subalgebra $\mathscr{S}(\R^*,\CCC^{\infty}_c(M\times M,\hd))$ is an ideal of the Schwartz algebra $\mathscr{S}(\T_HM,\hd)$.\end{corollary}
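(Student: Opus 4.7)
The plan is to show the subalgebra is stable under the $*$-involution, which is immediate since the involution $f \mapsto f^*$ preserves both the condition of being supported in $\R^*$ with values in $\CCC^\infty_c(M\times M, \hd)$ and the Schwartz decay in $t$. By this symmetry, it suffices to prove that $\mathscr{S}(\R^*,\CCC^{\infty}_c(M\times M,\hd))$ is a left ideal, i.e. for every $f \in \mathscr{S}(\T_HM,\hd)$ and $g \in \mathscr{S}(\R^*,\CCC^{\infty}_c(M\times M,\hd))$ we have $f\ast g \in \mathscr{S}(\R^*,\CCC^{\infty}_c(M\times M,\hd))$.

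The key reduction uses the decomposition given in the theorem just above: write $f = f_1 + f_2$ with $f_1 \in \mathscr{S}(\mathbb{V},\hd)$ and $f_2 \in \mathscr{S}(\R^*, \CCC^\infty_c(M\times M, \hd))$. The product $f_2 \ast g$ was already shown to lie in $\mathscr{S}(\R^*,\CCC^\infty_c(M\times M, \hd))$ in the proof of the preceding theorem, so the remaining task is to handle $f_1 \ast g$. For $t \neq 0$, the convolution is simply the pair groupoid convolution $(f_1 \ast g)_t = f_{1,t} \ast_{M\times M} g_t$, and both factors are in $\CCC^\infty_c(M\times M, \hd)$; hence each slice $(f_1 \ast g)_t$ belongs to $\CCC^\infty_c(M\times M,\hd)$, with a support that remains in a fixed compact subset of $M\times M$ thanks to the uniform support conditions on $f_1$ (support in $\mathbb{V}$ projects at each $t\neq 0$ onto a fixed compact of $M\times M$) and on $g$.

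The heart of the matter is the Schwartz decay of $t \mapsto (f_1 \ast g)_t$ as $t\to 0$ and as $t\to \pm\infty$, together with vanishing at $t=0$. For large $|t|$, the support condition on $f_1$ (which is supported in $\exp^{\nabla,\psi}(K_T)$ for some $K$ and $T$) forces $f_{1,t} = 0$ for $|t|>T$, while $g_t$ already has rapid decay in $t$; so $(f_1\ast g)_t$ vanishes for $|t|>T$. For $t\to 0$, the point is that $g_t$ and all its derivatives with respect to the $M\times M$ variables have Schwartz decay in $t$ (uniformly on the fixed compact support in $M\times M$), whereas the family $(f_{1,t})_{t\in \R^*}$ together with all its derivatives is uniformly bounded in the $\CCC^\infty_c(M\times M,\hd)$ semi-norms (this is the content of $f_1 \in \mathscr{S}(\mathbb{V},\hd)$ restricted to $t \neq 0$, since $f_1$ extends smoothly to $t=0$ into a Schwartz section on the fiber). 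Leibniz rule and the bilinearity of the pair groupoid convolution then transfer the Schwartz decay from $g$ to $f_1 \ast g$, yielding $(f_1 \ast g) \in \mathscr{S}(\R^*,\CCC^\infty_c(M\times M,\hd))$.

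The main obstacle I expect is bookkeeping the uniformity of bounds on $f_{1,t}$ near $t=0$ in the $\CCC^\infty_c$-topology on $M\times M$; this is not conceptually hard but requires being careful with how the exponential chart $\exp^{\nabla,\psi}$ degenerates when $t\to 0$ (the support $\delta_t^{-1}(K)$ grows in the fiber direction, but this growth is exactly absorbed by the Schwartz decay of $g_t$ on the finite compact $M\times M$-support). Once this uniformity is recorded, the rest is a routine estimate: for every pair of multi-indices $(\alpha,\beta)$ and every $N\geq 0$, one obtains $\|\partial^\alpha_x\partial^\beta_y(f_1\ast g)_t\|_\infty = O(|t|^N)$ as $t\to 0$, finishing the argument.
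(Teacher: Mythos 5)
Your overall strategy — decompose $f = f_1 + f_2$, reduce by $*$-invariance to a one-sided claim, dispose of $f_2\ast g$ immediately, and concentrate on $f_1\ast g$ — is exactly what the paper's proof of the enclosing theorem is doing when it asserts that $f_1\ast g_2, f_2\ast g_1, f_2\ast g_2$ lie in $\mathscr{S}(\R^*,\CCC^{\infty}_c(M\times M,\hd))$; the corollary is left without a written proof in the paper precisely because it is read off from that assertion. So the route is the same.

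There is, however, a genuine error in the middle of your argument. You claim that "the family $(f_{1,t})_{t\in\R^*}$ together with all its derivatives is uniformly bounded in the $\CCC^{\infty}_c(M\times M,\hd)$ semi-norms," justifying this by "since $f_1$ extends smoothly to $t=0$ into a Schwartz section on the fiber." This is false. The extension of $f_1$ to $t=0$ lives on the \emph{tangent groupoid}, not on $M\times M\times\R$; as $t\to 0$ the slices $f_{1,t}$, viewed on $M\times M$, concentrate onto the diagonal and their $\CCC^k$-seminorms blow up like negative powers of $t$. Concretely, in the exponential chart $f_{1,t}(x,y)=\tilde f_1\bigl(x,\delta_t^{-1}\psi^{-1}\exp_x^{-1}(y),t\bigr)$, and any derivative in the $(x,y)$-variables picks up factors of $\diff\delta_t^{-1}$ by the chain rule, i.e. factors $t^{-j}$ where $j$ is the filtration degree of the hit component. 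So the crucial premise of your Leibniz-rule argument, uniform boundedness of $f_{1,t}$, does not hold.

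What saves the conclusion is precisely what you gesture at in your final paragraph but do not make precise: the growth is only polynomial in $t^{-1}$ (with exponents controlled by the orders of the derivatives taken, via the grading), while $g_t$ and all its $\CCC^{\infty}_c(M\times M,\hd)$-seminorms are $O(t^N)$ for every $N$ as $t\to 0$. Writing the convolution so that range-derivatives hit $f_{1}$ and source-derivatives hit $g$, one gets $\|\partial^{\alpha}_x\partial^{\beta}_y(f_1\ast g)_t\|_{\infty}\leq \|\partial^{\alpha}_x f_{1,t}\|_{L^1}\,\|\partial^{\beta}_y g_t\|_{\infty}\leq C_{\alpha}\,t^{-m(\alpha)}\cdot C_{\beta,N}\,t^{N}$ for every $N$, which still decays rapidly. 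Replace the false uniform-boundedness claim with this polynomial-versus-rapid-decay estimate and the proof is correct. The part about $|t|>T$ (vanishing of $f_{1,t}$ there) and the treatment of $f_2\ast g$ are fine as written.
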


\section{The ideal J of Debord and Skandalis}

In this section we introduce the analog of the ideal $\mathcal{J}(G)$ of Debord and Skandalis (see \cite{debord2014adiab}) for $G = M\times M$ (although the idea directly generalizes to groupoids with filtered algebroid). Roughly speaking this ideal of $\mathscr{S}(\T_HM,\hd)$ corresponds, in the commutative case, to the functions that vanish at infinite order on the zero section of $TM$ seen as the zero fiber. In the noncommutative case we need to replace evaluation at zero by the trivial representation of the fiber. Elements of $\mathscr{S}(\R^*,\CCC^{\infty}_c(M\times M,\hd))$ already vanish with infinite order on $T_HM$ so they will satisfy this condition. Therefore we need to declare which functions of $\mathscr{S}(\mathbb{V},\hd)$ will be in the ideal for an exponential chart $\exp^{\nabla,\psi} \colon \mathbb{U} \isomto \mathbb{V}$. Using the exponential map we can define the functions on the ideal working on $\mathbb{U}$. This allows us to use the Fourier transform $\mathcal{F} \colon \mathscr{S}(\gt_HM\times \R,\hd) \isomto \mathscr{S}(\gt_HM^*\times \R)$. Now the trivial representation of a fiber corresponds to the evaluation at $0$ for the Fourier transform\footnote{Here we use the scalar Fourier transform. In the non-commutative setup a representation valued Fourier transform is often more suitable, see \cite{LipRos,FermanianFischer}. However, since we are only interested in the trivial representation (which corresponds to a single point through the orbit method) the scalar valued transform is more manageable.}. Consequently we can reformulate vanishing on the trivial representation by the vanishing of the Fourier transform at zero.

\begin{definition}Let $\exp^{\nabla,\psi}\colon \mathbb{U} \isomto \mathbb{V}$ be an exponential chart for $\T_HM$. A function $f \in \mathscr{S}(\T_HM,\hd)$ written as 
\[f = f_0 +f_1 \in \mathscr{S}(\mathbb{V},\hd) + \mathscr{S}(\R^*,\CCC^{\infty}_c(M\times M, \hd))\]
is in the subspace $\mathcal{J}_H(M)$ if $\F\left(f_0 \circ \exp^{\nabla,\psi}\circ\Psi\right)$ vanishes at all orders on the zero section of $\gt_HM^*\times \R$.
\end{definition}

\begin{proposition}\label{Normalizer}A function $f = (f_t)_{t\in \R} \in \mathscr{S}(\T_HM,\hd)$ is in $\mathcal{J}_H(M)$ if and only if for every $g \in \CCC^{\infty}_c(M\times M,\hd)$ the function $t\mapsto f_t\ast g$ extends to a function that belongs to $\mathscr{S}(\R^*,\CCC^{\infty}_c(M\times M,\hd))$.
\end{proposition}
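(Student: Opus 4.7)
The plan is to reduce the question to a single exponential chart and then match the two conditions via a graded asymptotic expansion of \(f_t\ast g\) at \(t=0\).

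First I would split \(f = f_{\mathbb{V}} + f_{\R^*}\) with \(f_{\mathbb{V}} \in \mathscr{S}(\mathbb{V},\hd)\) and \(f_{\R^*} \in \mathscr{S}(\R^*,\CCC^{\infty}_c(M\times M,\hd))\). The second summand satisfies both conditions trivially: its \(\mathbb{V}\)-part is zero so it lies in \(\mathcal{J}_H(M)\), and pair-groupoid convolution with \(g\) preserves the fixed compact support in \(M\times M\) as well as the Schwartz behaviour in \(t\). One may therefore assume \(f = f_{\mathbb{V}}\) and set \(F := f \circ \exp^{\nabla,\psi}\circ \Psi \in \mathscr{S}(\mathbb{U},\hd)\).

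For \(t\neq 0\), I would write the pair-groupoid convolution \(f_t\ast g(y,x)\) as an integral over \(M\) in the \(z\)-variable and change variables to \(\zeta \in \gt_{H,x}M\) via \(z = \exp^{\nabla,\psi}_x(\psi(\delta_t\zeta))\). After absorbing the Jacobians into the half-density factor the result is, schematically,
\[f_t\ast g(y,x) = \int_{\gt_{H,x}M} F(x,\zeta,t)\, g(y,\exp^{\nabla,\psi}_x(\psi(\delta_t\zeta)))\, \diff\zeta,\]
with integrand supported in a fixed compact set in \(\zeta\) independent of \(t\). Taylor-expanding \(g\) in its second variable using \(\delta_t\zeta = (t^{q_1}\zeta_1,\ldots,t^{q_m}\zeta_m)\) gives a graded asymptotic series
\[g(y,\exp^{\nabla,\psi}_x(\psi(\delta_t\zeta))) \sim \sum_{\alpha} t^{|\alpha|_H}\, c_\alpha(y,x)\, \zeta^\alpha,\]
with \(|\alpha|_H = \sum_j q_j \alpha_j\) and \(c_\alpha\) a polynomial in the derivatives of \(g\) at \((y,x)\). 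Integrating term by term against \(F(x,\cdot,t)\) and using the elementary identity \(\int F(x,\zeta,t)\,\zeta^\alpha\,\diff\zeta = (-\ii)^{|\alpha|}\,\partial^\alpha_\eta \widehat{F}(x,0,t)\), each Taylor coefficient of \(t\mapsto f_t\ast g\) at \(t=0\) becomes a linear combination of values of derivatives \(\partial^\alpha \widehat{F_0}(x,0)\).

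I would then read off both directions. If \(f\in \mathcal{J}_H(M)\) then every \(\partial^\alpha \widehat{F_0}(x,0)\) vanishes, hence all Taylor coefficients at \(t=0\) vanish and a standard remainder estimate, using the Schwartz decay of \(F\) together with the uniform compact support, yields \(t\mapsto f_t\ast g \in \mathscr{S}(\R^*,\CCC^{\infty}_c(M\times M,\hd))\). Conversely, if the latter holds for every \(g\), then the Taylor coefficients at \(t=0\) all vanish; varying \(g\) to realize arbitrary jets at a chosen point \((y_0,x_0)\) lets one extract each \(\partial^\alpha \widehat{F_0}(x_0,0)\) and force it to be \(0\). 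The hard part will be the bookkeeping of the graded Taylor expansion with uniform control on the remainders in \(x\), \(y\) and \(\zeta\), and the careful handling of the half-density Jacobians so that the schematic integral above genuinely represents the groupoid convolution.
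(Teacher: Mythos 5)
Your proof is correct and follows essentially the same approach as the paper: reduce to a single exponential chart using that \(\mathscr{S}(\R^*,\CCC^\infty_c(M\times M,\hd))\) is an ideal, then relate the order of vanishing of \(t\mapsto f_t\ast g\) at \(t=0\) to the graded Taylor coefficients of the Fourier transform of \(f\) at the zero section. The paper phrases the central computation as the Taylor expansion at \(t=0\) of \(t\mapsto \F(f_t\circ\exp^{\nabla,\psi}\circ\Psi)(x,\tsp\diff\delta_t\eta)\), which is precisely the Fourier-dual formulation of your moment/jet identity.
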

\begin{proof}
Since $\mathscr{S}(\R^*,\CCC^{\infty}_c(M\times M,\hd))$ is an ideal, we can assume that the support of $f$ lies in an exponential chart 
$$\exp^{\nabla,\psi}\circ \Psi \colon \mathbb{U} \isomto \mathbb{V}.$$
Writing the Taylor series at 0 of $t \mapsto \F\left(f_t\circ\exp^{\nabla,\psi}\circ\Psi\right)(x,\tsp\diff\delta_t \eta)$, we see that every $t\mapsto f_t\ast g$ vanishes at order $k$ at $t = 0$ if and only if the Taylor coefficients up to order $k$ vanish as well.
\end{proof}

\begin{remmark}The proof actually shows that the functions $t\mapsto f_t \ast g$ have the same order of vanishing as the one of the Fourier transform of $f$ in local charts.\end{remmark}

\begin{corollary}The set $\mathcal{J}_H(M)$ is a *-ideal of $\mathscr{S}(\T_HM,\hd)$.\end{corollary}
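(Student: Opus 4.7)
The plan is to verify separately that $\mathcal{J}_H(M)$ is a linear subspace, is stable under the involution, and is a two-sided ideal.

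Linearity is immediate, since the defining condition (the vanishing at infinite order on the zero section of $\gt_HM^*\times \R$ of the partial Fourier transform in the chart) is linear in $f$.

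For the right ideal property, let $f\in\mathcal{J}_H(M)$, $h\in\mathscr{S}(\T_HM,\hd)$, and $g\in\CCC^{\infty}_c(M\times M,\hd)$. Set $k(t):=f_t\ast g$; by Proposition \ref{Normalizer} this extends to an element of $\mathscr{S}(\R^*,\CCC^{\infty}_c(M\times M,\hd))$, which by the previous corollary is an ideal of $\mathscr{S}(\T_HM,\hd)$. Viewing $k$ as a function on $\T_HM$ extended by $0$ on the $t=0$ fiber, the groupoid convolution $h\ast k$ stays in $\mathscr{S}(\R^*,\CCC^{\infty}_c(M\times M,\hd))$. Associativity gives $(h\ast k)_t=h_t\ast(f_t\ast g)=(h\ast f)_t\ast g$ for $t\neq 0$, so the converse direction of Proposition \ref{Normalizer} delivers $h\ast f\in\mathcal{J}_H(M)$.

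For $*$-stability, I would first establish the left analogue of Proposition \ref{Normalizer}: $f\in\mathcal{J}_H(M)$ if and only if $t\mapsto g\ast f_t$ extends to $\mathscr{S}(\R^*,\CCC^{\infty}_c(M\times M,\hd))$ for every $g\in\CCC^{\infty}_c(M\times M,\hd)$. The argument is the same Taylor-expansion-in-$t$ argument already used, applied now to $\F(g\ast f_t)$ instead of $\F(f_t\ast g)$; the convolution in the chart is controlled by the Baker--Campbell--Hausdorff formula on the fibers of $T_HM$ at $t=0$, and the identification of the Taylor coefficients with the normal derivatives of $\F f$ along the zero section works symmetrically on the left and on the right. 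Once the left version is in hand, $*$-stability follows from the identity $(f^*)_t\ast g=(g^*\ast f_t)^*$ together with the $*$-stability of the ideal $\mathscr{S}(\R^*,\CCC^{\infty}_c(M\times M,\hd))$, via Proposition \ref{Normalizer}. The left ideal property is then formal: for $h\in\mathscr{S}(\T_HM,\hd)$ and $f\in\mathcal{J}_H(M)$ we have $f^*\in\mathcal{J}_H(M)$, hence $h^*\ast f^*\in\mathcal{J}_H(M)$ by the right ideal property, and finally $f\ast h=(h^*\ast f^*)^*\in\mathcal{J}_H(M)$.

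The main obstacle is the left analogue of Proposition \ref{Normalizer}. Although structurally parallel to the statement already proved, the Taylor coefficients of $\F(g\ast f_t)$ at $t=0$ must be tracked through the BCH formula on the fibers of $T_HM$, in particular to check that the condition on $f$ they impose is the same jet condition on $\F f$ at the zero section. All other steps are straightforward once this symmetric characterization is secured.
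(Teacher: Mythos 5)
Your proof plan is essentially sound, and in fact it is more careful than the paper's own one-line proof. The paper simply asserts that the corollary ``directly follows from the previous proposition and the fact that $\mathscr{S}(\R^*,\CCC^{\infty}_c(M\times M,\hd))$ is a $*$-ideal,'' without spelling out that Proposition~\ref{Normalizer} only tests against \emph{right} convolution $f_t\ast g$. You correctly observe that this gives, directly, only stability of $\mathcal{J}_H(M)$ under left multiplication: writing $(h\ast f)_t\ast g = h_t\ast(f_t\ast g)$ and using that $\mathscr{S}(\R^*,\CCC^{\infty}_c(M\times M,\hd))$ is an ideal, Proposition~\ref{Normalizer} deliver $h\ast f\in\mathcal{J}_H(M)$. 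But for $f\ast h$ one cannot apply the criterion verbatim, since $(f\ast h)_t\ast g=f_t\ast(h_t\ast g)$ with a $t$-dependent ``test function'' $h_t\ast g$, and for $*$-stability one similarly needs to control $g\ast f_t=\bigl((f_t)^*\ast g^*\bigr)^*$ on the left. So there is a real gap in the paper's terse justification that you have correctly located, and your route to close it — a left analogue of Proposition~\ref{Normalizer} proved by the same Taylor-expansion-in-$t$ argument, followed by the formal reductions $f\ast h=(h^*\ast f^*)^*$ — is correct and reasonable.

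Two small remarks. First, you have the words ``left'' and ``right'' swapped relative to the usual convention (stability under $a\mapsto ha$ is the \emph{left} ideal property), but the mathematics is unambiguous. Second, there is a slightly more economical route to $*$-stability that avoids re-running the Taylor argument: work directly with the chart definition of $\mathcal{J}_H(M)$. The groupoid inversion of $\T_HM$, transported through an exponential chart $\exp^{\nabla,\psi}\circ\Psi\colon\mathbb{U}\to\mathbb{V}$, is a diffeomorphism of $\mathbb{U}$ (after shrinking $\mathcal{U}$ to be inversion-stable) that fixes the zero section pointwise and whose fibrewise differential there is $-\mathrm{Id}$; along each fiber it acts by a polynomial change of variables (it is exactly $\xi\mapsto-\xi$ at $t=0$, by $\exp(X)^{-1}=\exp(-X)$, and remains a polynomial diffeomorphism for $t\neq 0$ via the BCH and connection-exponential formulas). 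Since the condition ``$\F\bigl(f_0\circ\exp^{\nabla,\psi}\circ\Psi\bigr)$ vanishes to all orders on the zero section'' is equivalent to the vanishing of all $\xi$-moments $\int_{\gt_{H,x}M}\xi^\alpha F(x,\xi,t)\,\diff\xi$ fibrewise, and this moment condition is manifestly invariant under polynomial fibrewise changes of variable that fix the origin and have unit Jacobian (half-densities absorb the Jacobian), $*$-stability follows. Either way, your identification of what needs to be checked is exactly right.
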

\begin{proof}
This directly follows from the previous proposition and the fact that the subalgebra $\mathscr{S}(\R^*,\CCC^{\infty}_c(M\times M,\hd))$ is a *-ideal of $\mathscr{S}(\T_HM, \hd)$ itself.
\end{proof}

\section{Pseudodifferential operators as integrals}

We now adapt the main result of \cite{debord2014adiab} to the non-commutative case. To prove that a certain distribution on $M\times M$ is a pseudodifferential operator we will want to find a quasi-homogeneous extension to $\T_HM$. This is not very natural however and the natural extension is exactly homogeneous. We thus need a global version of Taylor's Lemma \ref{Taylor1}. Its proof is similar to the previous one that can be found in \cite[Prop. 2.4]{Taylor}, this global version can be found in \cite[Prop. 3.4]{AndroulidakisMohsenYunken}.

\begin{lemma}[Global Taylor's lemma]\label{Taylor2}Let $\exp^{\nabla,\psi}\colon \mathbb{U} \isomto \mathbb{V} \subset \T_HM$ be an exponential chart. Let $\PP \in \bbpsi^m_H(M)$ with support in $\mathbb{V}$. There is a (unique) smooth function $v \in \CCC^{\infty}((\gt_HM\times \R )\setminus (M\times \{(0,0)\}))$ such that:
\begin{itemize}
\item[i)]$v$ is homogeneous of degree $m$: $\forall \lambda > 0, v(x,\tsp\diff\delta_{\lambda}(\eta),t\lambda) = \lambda^m v(x,\eta,t)$.
\item[ii)]If $\chi \in \CCC^{\infty}_c(\gt_HM^*\times \R)$ equal to $1$ in the neighborhood of the zero section, then the map:
$$(x,\xi,t) \mapsto \exp^{\nabla,\psi*}\PP(x,\xi,t)- \int_{\eta\in \gt_{H,x}M^*}e^{\ii \eta(\xi)}(1-\chi)(x,\eta,t)v(x,\eta,t)$$
is a smooth section of $\Omega^{1/2}$ over $\gt_HM\times \R$. It has support included in $\gt_HM\times [-T;T]$ for some $T > 0$, and all the derivatives w.r.t. $x$ and $t$ are Schwartz in the $\xi$ direction uniformly in $x$ and $t$.
\end{itemize}
Conversely if a function $v$ satisfies $i)$ then one can find $\PP \in \bbpsi^m_H(M)$ such that $ii)$ is satisfied. 
\end{lemma}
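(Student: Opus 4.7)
The plan is to adapt Lemma \ref{Taylor1} to the tangent groupoid by transporting the problem to the chart $\exp^{\nabla,\psi}\circ\Psi\colon\mathbb{U}\to\mathbb{V}$ and Fourier-transforming in the fiber variable $\xi\in\gt_HM$. First I pull $\PP$ back to a half-density $\widetilde\PP$ on $\mathbb{U}\subset\gt_HM\times\R$; in these coordinates the zoom action becomes $\alpha_\lambda(x,\xi,t)=(x,\delta_\lambda\xi,\lambda^{-1}t)$, so the quasi-homogeneity of $\PP$ modulo $\CCC^\infty_p(\T_HM,\hd)$ transfers directly. Taking the fiberwise Fourier transform in $\xi$ gives a smooth function $Q$ on $\gt_HM^*\times\R$ (smoothness from the fact that $\PP$ is $r$-fibered and properly supported), and the quasi-homogeneity of $\widetilde\PP$ becomes, with the natural half-density normalization, quasi-homogeneity of $Q$ of degree $m$ under $(x,\eta,t)\mapsto(x,\tsp\diff\delta_\lambda\eta,\lambda t)$, up to Schwartz remainders in $\eta$. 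The sought $v$ is the genuine homogeneous representative of this class.

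I construct $v$ in two stages. On the zero fiber $t=0$, the restriction $\PP_0=\ev_{0*}\PP$ lies in $S^m_c(T_HM)$, so Lemma \ref{Taylor1} applied fiberwise yields a smooth homogeneous $v_0\colon\gt_HM^*\setminus M\to\CC$ with $\mathcal{F}(\PP_0)-(1-\chi|_{t=0})v_0\in\mathscr{S}(\gt_HM^*)$. The $\R^*_+$-action on the complement of $M\times\{(0,0)\}$ is free, and the orbits at $t=0$ are disjoint from those at $t\neq 0$. I extend $v_0$ to the $t\neq 0$ region by the limit $v(x,\eta,t):=\lim_{\lambda\to+\infty}\lambda^{-m}Q(x,\tsp\diff\delta_\lambda\eta,\lambda t)$; locally uniform convergence follows from the Fourier-side quasi-homogeneity and the rapid decay of the Schwartz remainder. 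The same formula at $t=0$ recovers $v_0$ by Lemma \ref{Taylor1}, so the two pieces glue. Homogeneity of degree $m$ and smoothness on the complement of $M\times\{(0,0)\}$ are then built into the construction.

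The remaining checks follow a standard template. For property (ii), one verifies that $\exp^{\nabla,\psi*}\PP$ and the inverse Fourier transform of $(1-\chi)v$ have identical asymptotic expansions along every $\R^*_+$-orbit, forcing their difference to be a smooth half-density on $\gt_HM\times\R$ that is Schwartz in $\xi$ uniformly in $(x,t)$, with compact $t$-support inherited from the proper support of $\PP$. Uniqueness holds because two candidates make $(1-\chi)(v_1-v_2)$ Schwartz in $\eta$, so the homogeneous difference decays at infinity and must vanish. For the converse I take $\widetilde\PP:=\mathcal{F}^{-1}_\eta((1-\chi)v)$ on $\mathbb{U}$, push it forward via the chart, and check that the resulting $\PP\in\bbpsi^m_H(M)$ is quasi-homogeneous; the only nontrivial point is the cutoff mismatch $\chi-\chi\circ\alpha_\lambda^{-1}$, which is compactly supported and hence gives Schwartz corrections. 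The main obstacle throughout is the smoothness of $v$ across $t=0$: the $\R^*_+$-orbits degenerate at the excluded apex $M\times\{(0,0)\}$, and gluing the two pieces of the construction smoothly there is what ultimately forces the use of the zoom-action quasi-homogeneity of $\PP$.
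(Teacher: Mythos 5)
The paper does not actually give a proof of this lemma: it states ``Its proof is similar to the previous one that can be found in \cite[Prop.\ 2.4]{Taylor}, this global version can be found in \cite[Prop.\ 3.4]{AndroulidakisMohsenYunken},'' and moves on. So I am comparing your proposal to the strategy of those cited references rather than to anything written here.

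Your overall strategy is the right one and is the same as the one used by Taylor and by Androulidakis--Mohsen--Yuncken: transport $\PP$ to the chart $\mathbb{U}\subset\gt_HM\times\R$ via $\exp^{\nabla,\psi}\circ\Psi$, Fourier-transform fibrewise in $\xi$ (smoothness coming from proper support), observe that the zoom quasi-homogeneity of $\PP$ becomes quasi-homogeneity of $Q=\mathcal{F}_\xi\widetilde\PP$ under $\tsp\beta_\lambda(x,\eta,t)=(x,\tsp\diff\delta_\lambda\eta,\lambda t)$ modulo rapidly decaying remainders, and extract the exactly-homogeneous representative by a scaling limit. Your identification of the pushed-forward zoom action $\beta_\lambda(x,\xi,t)=(x,\delta_\lambda\xi,\lambda^{-1}t)$ in the chart is also correct and matches the one the paper records later in the proof of Theorem~\ref{PseudodiffIntégrales}.

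That said, there are two genuine gaps. First and most importantly, the smoothness of $v$ on $(\gt_HM\times\R)\setminus(M\times\{(0,0)\})$ is asserted (``Homogeneity\dots and smoothness\dots are then built into the construction'') and simultaneously flagged at the end as ``the main obstacle,'' but it is not proved. What is needed is $C^\infty_{\mathrm{loc}}$-convergence of $\lambda^{-m}Q\circ\tsp\beta_\lambda$ away from the apex, and this requires the cocycle/telescoping estimate: writing $R_\lambda=\alpha_{\lambda*}\PP-\lambda^m\PP$ one must use that $\lambda\mapsto R_\lambda$ is continuous in $\CCC^\infty_p$, derive $\widehat{R_{\lambda\mu}}=\lambda^m\,\tsp\beta_\mu^*\widehat{R_\lambda}+\widehat{R_\mu}$, and combine this with the locally uniform $\eta$-decay of $\widehat{R_\lambda}$ coming from proper support of $\PP$. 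Without this estimate the existence of the limit is plausible but unjustified, and the joint $C^\infty$-behaviour as $(\eta,t)\to(0,0)$ along orbits is precisely where a naive pointwise-in-$t$ argument could break. Second, you do not address the claim in (ii) that the remainder has support contained in $\gt_HM\times[-T;T]$ for some $T>0$. Proper support of $\PP$ alone does not give a bounded $t$-support for $\exp^{\nabla,\psi*}\PP$; what must be shown is that after subtracting $\mathcal{F}^{-1}_\eta\bigl((1-\chi)v\bigr)$, which itself has unbounded $t$-extent, the difference localizes in $t$. This follows from pushing the estimate in (ii) along zoom orbits back to a fundamental domain, but it is a point that needs to be argued explicitly rather than inherited as you suggest from proper support.
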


\begin{theorem}\label{PseudodiffIntégrales}Let $m \in \CC$, $f\in \mathcal{J}_H(M)$. The convolution operator by $\int_0^{+\infty}t^{-m} f_t \frac{\diff t}{t}$ on $M\times M$ corresponds to a pseudodifferential operator of order $m$ in the filtered calculus. Its principal symbol is given as an element of $\mathcal{K}^m(T_HM)$ by $\int_0^{+\infty} t^{-m}\delta_{t*}f_0 \frac{\diff t}{t}$.
\end{theorem}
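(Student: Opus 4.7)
The plan is to construct a quasi-homogeneous extension \(\PP \in \bbpsi^m_H(M)\) whose restriction at \(t=1\) recovers the claimed kernel. The natural candidate, obtained by averaging \(f\) along the zoom action, is
\[\PP := \int_0^{+\infty}\lambda^{-m}\alpha_{\lambda*}f\,\frac{\diff\lambda}{\lambda}.\]
The first step is to show that this integral defines a properly supported, \(r\)-fibered distribution on \(\T_HM\). For \(\lambda \to +\infty\), the Schwartz decay of \(f\) in the time direction dominates the polynomial weight. For \(\lambda \to 0\) the hypothesis \(f \in \mathcal{J}_H(M)\) is essential: by Proposition \ref{Normalizer}, for any \(g \in \CCC^{\infty}_c(M\times M,\hd)\) the function \(t \mapsto f_t*g\) vanishes at infinite order at \(t=0\), which is precisely what compensates the \(\lambda^{-m-1}\) singularity, and the analogous moment-vanishing statement for \(f_0\) (coming from the Fourier-transform characterization of \(\mathcal{J}_H(M)\)) handles the integral on the zero fiber.

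Once convergence is established, the change of variables \(\lambda' = \mu\lambda\) gives \(\alpha_{\mu*}\PP = \mu^m \PP\) on the nose, so \(\PP\) is exactly homogeneous of degree \(m\) under the zoom action, hence a fortiori quasi-homogeneous in the sense of the definition of \(\bbpsi^m_H(M)\). Proper support and \(r\)-fiberedness are inherited from \(f\) since \(\alpha_\lambda\) preserves both. Restricting the integral to the two open pieces of \(\T_HM\): on \(M\times M\times \R^*\), where \(\alpha_\lambda(x,y,t) = (x,y,\lambda^{-1}t)\), one obtains \(\PP_1 = \int_0^{+\infty}\lambda^{-m}f_\lambda\,\frac{\diff\lambda}{\lambda}\), the stated kernel; on the zero fiber \(T_HM\), where \(\alpha_\lambda\) acts as \(\delta_\lambda\), one obtains \(\PP_0 = \int_0^{+\infty}\lambda^{-m}\delta_{\lambda*}f_0\,\frac{\diff\lambda}{\lambda}\).

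To upgrade this to \(\PP \in \bbpsi^m_H(M)\), and thereby identify \(\PP_1\) as a filtered pseudodifferential operator of order \(m\), I would invoke the global Taylor lemma (Lemma \ref{Taylor2}) in the converse direction. Working in an exponential chart \(\exp^{\nabla,\psi}\circ\Psi \colon \mathbb{U} \isomto \mathbb{V}\) and Fourier-transforming in the fiber variable, the integral defining \(\PP\) becomes a function on \(\gt_HM^* \times \R\) which is homogeneous of degree \(m\) for the dual zoom action and smooth off the singular set \(M \times \{(0,0)\}\); the smoothness is again a consequence of the moment-vanishing built into \(\mathcal{J}_H(M)\). Lemma \ref{Taylor2} then produces from this function a representative in \(\bbpsi^m_H(M)\) matching \(\PP\) modulo a smooth remainder, so that \(\PP_1 \in \Psi^m_H(M)\) with principal symbol represented by \(\PP_0\); via Corollary \ref{QuasiVSExact}, this principal symbol is exactly the homogeneous distribution \(\int_0^{+\infty}\lambda^{-m}\delta_{\lambda*}f_0\,\frac{\diff\lambda}{\lambda} \in \mathcal{K}^m(T_HM)\).

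The hard part will be making the first step rigorous: one must show that the oscillatory integral defines a distribution with enough regularity across the zero fiber, i.e.\ that its fiberwise Fourier transform extends to a smooth homogeneous function off the singular locus. This is exactly what the ideal \(\mathcal{J}_H(M)\) was designed for, and the proof reduces in local charts to a Taylor-with-remainder analysis of \(\widehat{f}\) at the zero section of \(\gt_HM^*\times \R\), combined with the Schwartz decay of \(f\) at infinity in both spatial and temporal directions.
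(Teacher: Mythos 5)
Your proposal follows essentially the same strategy as the paper's proof: average $f$ along the zoom action to produce the exactly homogeneous extension $\PP = \int_0^{+\infty}\lambda^{-m}\alpha_{\lambda*}f\,\frac{\diff\lambda}{\lambda}$, observe that its zero-fiber restriction is $\int_0^{+\infty}\lambda^{-m}\delta_{\lambda*}f_0\,\frac{\diff\lambda}{\lambda}$, then pass to an exponential chart and Fourier transform so that the $\mathcal{J}_H(M)$ vanishing condition on $\hat f$ near the zero section yields convergence and smoothness of $\hat\PP$ away from $M\times\{(0,0)\}$, and finally invoke the converse direction of the global Taylor Lemma~\ref{Taylor2} to produce a genuine element of $\bbpsi^m_H(M)$ agreeing with $\PP$ up to a smooth remainder. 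This is exactly the route the paper takes, so there is nothing to add beyond the Fourier-side computation $\widehat{\PP}(x,\eta,t) = \int_0^{+\infty}s^{-m}\hat{f}_{ts}(x,\tsp\diff\delta_{s}(\eta))\frac{\diff s}{s}$ that makes your ``moment-vanishing implies smoothness'' claim explicit.
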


\begin{proof}
If $f\in \mathscr{S}(\R^*,\CCC^{\infty}_c(M\times M,\hd))$, then the integral converges absolutely. Proposition \ref{Normalizer} then gives meaning to the integral  $\int_0^{+\infty}t^{-m} f_t \frac{\diff t}{t}$ as a convolution operator. Moreover, we can assume that $f$ is supported in an exponential chart and work on $\gt_HM \times \R$. Using the converse of Taylor's Lemma, we need to find a homogeneous extension of the Fourier transform of the operator on $\mathbb{U}$, where $\mathbb{U}\isomto \mathbb{V}$ is the exponential neighborhood where $f$ is supported. We can rewrite the operator as
$$\int_0^{+\infty}s^{-m}\alpha_{s*}f_1\frac{\diff s}{s},$$
and thus consider the natural extension:
$$\PP_{t} := \int_0^{+\infty}s^{-m}\alpha_{s*}f_t \frac{\diff s}{s}.$$
This extension is exactly homogeneous of degree $m$ for the push-forward by the zoom action. Since $\alpha$ restricts to $\delta$ on the zero fiber we obtain that 
$$\PP_0 = \int_0^{+\infty} s^{-m}\delta_{s*}f_0 \frac{\diff s}{s}.$$
In order to use Taylor's Lemma, we consider $f$ as a function in $\mathscr{S}(\mathbb{U})$. We now need to show that the Fourier transform of $\PP$ defines a smooth function. We have $f_t \in \mathscr{S}(\gt_HM,\hd)$ so its Fourier transform is well defined. Recall that under the exponential map, the zoom action on $\gt_HM \times \R$ becomes $\beta$ with:
$$\beta_{\lambda}(x,\xi,t) = (x,\diff\delta_{\lambda}(\xi),\lambda^{-1}t).$$
The action on $\gt_HM^*\times \R$ is then $\tsp\beta$ where:
$$\tsp\beta_{\lambda}(x,\eta,t) = (x,\tsp\diff\delta_{\lambda}(\eta),\lambda t).$$
If $\mathcal{F} \colon \mathscr{S}(\gt_HM\times \R,\hd) \to \mathscr{S}(\gt_HM^*\times \R)$ denotes the Fourier transform in the $\gt_HM$ directions, then we have:
$$\mathcal{F}\circ\beta_{\lambda *} = \tsp\beta_{\lambda}^*\circ\mathcal{F}.$$
This result yields:
$$\widehat{\PP}(x,\eta,t) = \int_0^{+\infty}s^{-m}\hat{f}_{t s}(x,\tsp\diff\delta_{s}(\eta))\frac{\diff s}{s}.$$
Now $s \mapsto \hat{f}_{t s}(x,\tsp\diff\delta_{s}(\eta))$ is in $\mathscr{S}(\R^*_+)$ by assumption, thus the integral converges (we can factor out a high enough power of $s$ to compensate for the $s^{-(m+1)}$ and get the convergence at $0$). This convergence is uniform for $(x,\eta,t)$ in any compact subset of $(\gt_HM^*\times \R) \setminus (M\times \{0\})$. Therefore, the function $\widehat{\PP}$ is smooth on $(\gt_HM^*\times \R) \setminus (M\times \{0\})$ and homogeneous of degree \(m\) by construction. Therefore, the operator given by Taylor's Lemma gives a quasi homogeneous extension to $\PP_1$ (it is equal to $\PP_1$ at $t =1$ modulo a smoothing operator).
\end{proof}

\begin{remmark}We actually see that to obtain the convergence of the integral we only need $\hat{f}$ to vanish on the zero section at order superior or equal to $\Re(m) + 1$.\end{remmark}

\begin{lemma}\label{Lifting}Let $f_0 \in \mathscr{S}(T_HM,\hd)$ such that $\hat{f_0}$ vanishes on $M\times \{0\}$ at order $k \in \N \cup \{+\infty\}$, then there exists $f \in \mathscr{S}(\T_HM,\hd)$ such that, in an exponential chart, $(\hat{f_t})_{t\in \R}$ vanishes on $M\times \{0\} \subset \gt_HM \times \R$ at order $k$ and $f_0 = f(\cdot,0)$.
\end{lemma}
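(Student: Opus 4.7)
The plan is to construct $f$ explicitly in an exponential chart as a product of cutoffs and $f_0$, and then verify the Fourier vanishing by a direct moment estimate that combines the hypothesis on $f_0$ with its Schwartz decay.

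I would first pick an exponential chart $\exp^{\nabla,\psi}\circ\Psi \colon \mathbb{U} \isomto \mathbb{V}$, a cutoff $\chi \in \CCC^{\infty}_c(\R)$ with $\chi \equiv 1$ in a neighborhood of $0$, and $\varphi \in \CCC^{\infty}(\gt_HM)$ with fiberwise compact support contained in $\mathcal{U}$ and $\varphi \equiv 1$ in a neighborhood of the zero section. Then define
\[
f(x,\xi,t) := \chi(t)\,\varphi(\delta_t\xi)\,f_0(x,\xi)
\]
on $\mathbb{U}$. Because $\delta_0 = 0$ and $\chi(0)=\varphi(0)=1$, the restriction at $t=0$ returns $f_0$. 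The checks that $f \in \mathscr{S}(\mathbb{U})$ are routine: the support condition holds with $K := \supp\varphi \cap \pi_M^{-1}(\supp_M f_0) \subset \mathcal{U}$ compact and any $T$ with $\supp\chi\subset[-T,T]$; the Schwartz condition in $\xi$ is inherited from $f_0$ at $t=0$, is automatic at $t\ne 0$ where the $\xi$-support is compact, and the mixed seminorms are controlled uniformly on $\supp\chi$ because the $\xi$-derivatives of $\varphi(\delta_t\xi)$ only produce bounded powers of $t$. Pushing forward through the exponential chart then places $f$ in $\mathscr{S}(\mathbb{V},\hd) \subset \mathscr{S}(\T_HM,\hd)$, as required.

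The substantive step is to verify the Fourier vanishing. The fiberwise Fourier transform in $\xi$ gives
\[
\partial_\eta^\alpha \hat{f}(x,0,t) = i^{|\alpha|}\chi(t)\,m_\alpha(x,t), \qquad m_\alpha(x,t) := \int_{\gt_{H,x}M}\xi^\alpha\,\varphi(\delta_t\xi)\,f_0(x,\xi)\,d\xi.
\]
The assumption on $\hat{f_0}$ translates to $\int \xi^\alpha f_0(x,\xi)\,d\xi = 0$ for every $|\alpha|\le k$, so I can subtract this to rewrite
\[
m_\alpha(x,t) = \int \xi^\alpha\bigl(\varphi(\delta_t\xi)-1\bigr) f_0(x,\xi)\,d\xi.
\]
Since $\varphi \equiv 1$ near $0$, the integrand is supported where $\delta_t\xi$ lies outside a fixed neighborhood of the origin; in a homogeneous quasi-norm on $\gt_HM$ this reads $|\xi| \ge c/|t|$. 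Bounding the integrand by $C_N |\xi|^{|\alpha|_H}(1+|\xi|)^{-N}$ and integrating over $|\xi|\ge c/|t|$ (in polar-type coordinates adapted to the dilations) yields $|m_\alpha(x,t)| \le C'_{N,\alpha}\,|t|^{N-|\alpha|_H-d_H}$ for every $N$, uniformly in $x$ on compact sets. Hence $m_\alpha$ is flat at $t=0$ for every $|\alpha|\le k$, so $\chi(t)\,m_\alpha(x,t)$ has all $t$-derivatives vanishing at $t=0$, giving $\partial_t^\beta \partial_\eta^\alpha \hat{f}(x,0,0) = 0$ whenever $|\alpha|+|\beta|\le k$ (using that $|\alpha|+|\beta|\le k$ already forces $|\alpha|\le k$). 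This is precisely the vanishing of $\hat{f}$ at order $k$ on $M\times\{0\}\subset \gt_HM^*\times\R$.

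The only real work is the scaling estimate showing $m_\alpha$ is flat at $t=0$, which relies on the interplay between the vanishing-moment hypothesis (killing the ``main term'' coming from $\varphi(\delta_t\xi)=1$ near the zero section) and the Schwartz decay of $f_0$ (controlling the tail on $|\xi|\gtrsim 1/|t|$). Everything else is bookkeeping for the Schwartz and support axioms in the definition of $\mathscr{S}(\mathbb{U})$.
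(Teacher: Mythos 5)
Your construction is the same as the paper's: there one takes $\widetilde f(x,\xi,t)=f_0(x,\xi)\,\varphi(x,\diff\delta_t\xi)$ for a compactly supported $\varphi$ equal to $1$ near the zero section, and then multiplies by a time cutoff $\chi(t)$ to restore rapid decay in $t$. Your explicit moment-and-tail estimate showing that $m_\alpha(x,t)=\int\xi^\alpha(\varphi(\delta_t\xi)-1)f_0(x,\xi)\,\diff\xi$ is flat at $t=0$ is simply a careful expansion of the step the paper asserts via the convolution formula $\widehat{\widetilde f_t}=\widehat{f_0}\ast_{ab}\widehat{\varphi}\circ\tsp\diff\delta_{t^{-1}}$, so the proof is correct and follows essentially the same route.
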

\begin{proof}
Let us consider a function $\vphi \in \CCC^{\infty}_c(\gt_HM)$ such that $\vphi(x,0) = 1$ and $\lim_{t\rightarrow+\infty}\widehat{\vphi}\circ \tsp\diff\delta_{t} = \delta_M$ where $\delta_M$ is the distribution corresponding to integrating along the zero section of $\gt_HM$ (it corresponds to the Dirac distribution at zero on each fiber of $\gt_HM$). Then we set:
$$\widetilde{f}(x,\xi,t)= f_0(x,\xi)\varphi(x,\diff\delta_t(\xi)).$$
We have $\widehat{\widetilde{f}_t}(x,\eta) = \widehat{f_0}(x,\eta)\ast_{ab}\varphi(x,\tsp\diff\delta_{t^{-1}}(\eta))$ where $\ast_{ab}$ is the convolution product on $\gt^*_HM$ seen as a bundle of abelian groups and we obtain the vanishing of $\widehat{\widetilde{f}}$ at the right order. However the function $\tilde{f}$ is not in the Schwartz class as $\tilde{f}(x,0,t) = f_0(x,0)$ is constant in $t$ hence not rapidly decreasing in general. This can be corrected by multiplying $\tilde{f}$ by a cutoff function $\chi \in \CCC^{\infty}_c(\R)$ that is constant equal to $1$ in a neighborhood of $0$.
\end{proof}

\begin{proposition}Every pseudodifferential operator can be written as in Theorem \ref{PseudodiffIntégrales}: if $P\in \Psi^m_H(M)$ there exists $f \in \mathcal{J}_H(M)$ such that 
$$P = \int_0^{+\infty}t^{-m}f_t\frac{\diff t}{t}.$$
\end{proposition}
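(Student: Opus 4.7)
The plan is to split $f$ into two parts: a main part $f_{\text{main}} \in \mathcal{J}_H(M)$ realizing $P$ modulo a smoothing remainder, built by iteratively matching principal symbols and a Borel-type summation; and a remainder $f_{\text{rem}}$ lying in the easier sub-ideal $\mathscr{S}(\R^*, \CCC^{\infty}_c(M\times M, \hd))$ which absorbs the final smoothing operator.

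\emph{Step 1 (principal symbol matching).} Pick any quasi-homogeneous extension $\PP$ of $P$ and let $v = \Theta(\PP_0) \in \mathcal{K}^m(T_HM)$ be the homogeneous representative of its principal symbol given by Corollary~\ref{QuasiVSExact}. Fix a homogeneous quasi-norm $|\cdot|$ on $T_HM$ and $\chi \in \CCC^{\infty}_c(\R^*_+)$, and set $f_0 := \chi(|\cdot|)v$. This is smooth and compactly supported in an annular region, hence Schwartz; using $\delta_{t*}v = t^m v$ and $|\delta_{t^{-1}}\xi| = t^{-1}|\xi|$, a direct change of variables gives
\[\int_0^{+\infty} t^{-m}\delta_{t*}f_0\frac{\diff t}{t} = \Big(\int_0^{+\infty}\chi(s)\frac{\diff s}{s}\Big)\,v.\]
Normalizing $\chi$ so that the Mellin integral equals $1$ recovers $v$ on the nose. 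A further Mellin-type refinement of $\chi$, imposing vanishing of its Mellin transform at the countably many graded exponents where moments of $f_0$ are evaluated, ensures that $\widehat{f_0}$ vanishes at infinite order on the zero section of $\gt_HM^*$. Lemma~\ref{Lifting} then lifts $f_0$ to $f^{(0)} \in \mathcal{J}_H(M)$, and Theorem~\ref{PseudodiffIntégrales} places $P^{(1)} := P - \int_0^{+\infty} t^{-m} f^{(0)}_t \frac{\diff t}{t}$ in $\Psi^{m-1}_H(M)$.

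\emph{Steps 2--3 (iteration and Borel summation).} Step~1 applies verbatim with $m-n$ in place of $m$: inductively it produces $g^{(n)} \in \mathcal{J}_H(M)$ with $P^{(n)} - \int_0^{+\infty} t^{-(m-n)} g^{(n)}_t\frac{\diff t}{t} \in \Psi^{m-n-1}_H(M)$. The simple but crucial observation is that $f^{(n)}_t := t^n g^{(n)}_t$ still belongs to $\mathcal{J}_H(M)$ (multiplication by $t^n$ only strengthens the Schwartz decay at $t=0$ and commutes with Fourier transform in the fiber direction, hence preserves vanishing on the zero section), and
\[\int_0^{+\infty} t^{-m}f^{(n)}_t\frac{\diff t}{t} = \int_0^{+\infty} t^{-(m-n)}g^{(n)}_t\frac{\diff t}{t}.\]
Iterating gives an asymptotic series $\sum_n f^{(n)}$ with $P - \sum_{n\leq N}\int_0^{+\infty} t^{-m}f^{(n)}_t\frac{\diff t}{t} \in \Psi^{m-N-1}_H(M)$ for every $N$. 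A standard Borel-type construction with cutoffs $\chi_n \in \CCC^{\infty}(\R)$ equal to $1$ near $0$ and supported in shrinking neighborhoods of $0$ assembles the series into a single $f_{\text{main}} \in \mathcal{J}_H(M)$; then $R := P - \int_0^{+\infty} t^{-m} f_{\text{main},t}\frac{\diff t}{t}$ is a (properly supported) smooth kernel.

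\emph{Step 4 (smoothing remainder).} Choose $c \in \CCC^{\infty}_c(\R^*_+)$ with $\int_0^{+\infty} t^{-m}c(t)\frac{\diff t}{t} = 1$ and set $f_{\text{rem}}(x,y,t) := c(t) R(x,y)$. Then $f_{\text{rem}} \in \mathscr{S}(\R^*, \CCC^{\infty}_p(M\times M,\hd)) \subset \mathcal{J}_H(M)$ and $\int_0^{+\infty} t^{-m} f_{\text{rem},t}\frac{\diff t}{t} = R$, so $f := f_{\text{main}} + f_{\text{rem}} \in \mathcal{J}_H(M)$ satisfies the required equality. The main obstacle is the Borel summation in Step~3: the cutoffs $\chi_n$ must be tuned finely enough that the partial sums converge in the Schwartz topology on $\mathscr{S}(\T_HM,\hd)$ while preserving the infinite-order Fourier-vanishing condition on the zero section of $\gt_HM^*\times\R$ that defines $\mathcal{J}_H(M)$.
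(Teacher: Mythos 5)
Your Steps~2--4 are essentially the paper's argument (match the symbol, drop one order, iterate, absorb the smoothing remainder), but there is a genuine gap in Step~1 that you yourself flag and then wave away. You cut off on the \emph{group} side, setting $f_0 = \chi(|\cdot|)\,v$ with $v = \Theta(\PP_0)$ a homogeneous distribution on $T_HM$. For the Lifting Lemma to apply and for $f$ to land in $\mathcal{J}_H(M)$, you need $\widehat{f_0}$ to vanish to infinite order at the zero section of $\gt_HM^*$; for a general cutoff $\chi$ this is simply false, since the Taylor coefficients of $\widehat{f_0}$ at $0$ are the weighted moments $\int \xi^{\alpha}\,\chi(|\xi|)\,v(\xi)\,\diff\xi$ and $v$ has no a~priori cancellation. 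Your ``Mellin-type refinement'' is the whole difficulty: you would need a single $\chi \in \CCC^{\infty}_c(\R^*_+)$ whose Mellin transform vanishes at an infinite (if discrete) list of graded exponents while remaining normalised at one point, and producing such a $\chi$ with Paley--Wiener control is a nontrivial construction that you neither carry out nor cite. As written, Step~1 is not a proof.

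The clean fix is the one the paper uses: cut off on the \emph{Fourier} side. The Taylor representative coming from Lemma~\ref{Taylor1} lives as a smooth homogeneous function $v \in \CCC^{\infty}(\gt_HM^*\setminus(M\times\{0\}))$; set $\widehat{f_0}(\eta) := \chi(|\eta|)\,v(\eta)$ with $\chi$ supported away from $0 \in \R_+$ and $\int_0^{+\infty}\chi(s)\,\tfrac{\diff s}{s}=1$. The same change of variables you performed then gives $v = \int_0^{+\infty} t^{-m}\,\delta_t^*\widehat{f_0}\,\tfrac{\diff t}{t}$, and $\widehat{f_0}$ vanishes \emph{identically} near the zero section, so $f_0 \in \mathscr{S}_0(T_HM,\hd)$ for free, with no Mellin condition whatsoever. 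Lemma~\ref{Lifting} then applies verbatim. With this replacement for Step~1, the remainder of your argument --- the observation that $t^n g^{(n)}_t \in \mathcal{J}_H(M)$, the Borel assembly, and the separate treatment of the smoothing part --- lines up with the paper's iterate-and-use-asymptotic-completeness route and is sound; the $t^n$ trick you use to keep a uniform weight $t^{-m}$ in the Mellin integral is a nice way to make the Borel step explicit.
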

\begin{proof}
The result is trivial if $P \in \CCC^{\infty}_c(M\times M,\hd)$. If $P$ is a pseudodifferential operator of order $m$ then let $\sigma^m(P) \in \Sigma^m(T_HM)$ be its principal symbol, its associated full symbol $v \in \CCC^{\infty}(\gt_HM^*\setminus (M\times \{0\})$ obtained by Lemma \ref{Taylor1} is homogeneous of order $m$ and can thus be written as $\int_0^{+\infty}t^{-m}\delta_t^*\hat{f_0}\frac{\diff t}{t}$ for a function $f_0 \in \mathscr{S}_0(\T_HM,\hd)$. We then use Lemma \ref{Lifting} to extend $f_0$ to a function $f \in \mathcal{J}_H(M)$. Then $P$ and $P_f :=  \int_0^{+\infty}t^{-m}f_t\frac{\diff t}{t}$ have the same principal symbol and thus belong to $\Psi^{m-1}_H(M)$. We then iterate the argument and approach $P$ with an asymptotic series (recall from \cite[Thm. 59]{van_erpyunken} that the pseudodifferential filtered calculus is asymptotically complete).
\end{proof}

\begin{lemma}Let $f,g \in \mathcal{J}_H(M)$ then $u \mapsto (f_t\ast g_{tu})_{t\in \R} \in \mathscr{S}(\R^*_+,\mathcal{J}_H(M))$.\end{lemma}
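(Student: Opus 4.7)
The plan is to reinterpret $(f_t \ast g_{tu})_{t\in\R}$ as the groupoid convolution $f \ast \alpha_{u*}g$ on $\T_HM$, and then exploit the zoom-invariance of the ideal $\mathcal{J}_H(M)$ together with the defining infinite-order vanishing at the zero section and the Schwartz decay of its elements to extract the Schwartz behaviour in $u$.

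First, in an exponential chart $\exp^{\nabla,\psi}\circ\Psi \colon \mathbb{U} \isomto \mathbb{V}$ with a zoom-invariant trivialization of $\hd$, the chart identification at time $t$ compared with time $ut$ (via the rescaling $\xi \mapsto \diff\delta_{u^{-1}}\xi$ in $\gt_HM$) gives
\[
(\alpha_{u*}g)(x,\xi,t) = g(x, \diff\delta_{u^{-1}}\xi, ut),
\]
which equals $g_{tu}$ on the pair groupoid for $t \neq 0$ and extends smoothly across $t = 0$ with value $\delta_{u*}g_0$. Hence $h^u := (f_t \ast g_{tu})_{t\in\R}$ equals $f \ast \alpha_{u*}g$ as a half-density on $\T_HM$. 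The zoom action preserves $\mathscr{S}(\T_HM,\hd)$, and its Fourier-side chart expression
\[
\widehat{\alpha_{u*}g}(x,\eta,t) = u^{d_H}\,\widehat{g}(x,\tsp\diff\delta_u\eta, ut)
\]
fixes the zero section $\{\eta = 0\}$ and preserves vanishing to all orders there, so $\alpha_{u*}g \in \mathcal{J}_H(M)$. Since $\mathcal{J}_H(M)$ is a $*$-ideal of the Schwartz algebra and $f \in \mathcal{J}_H(M)$, we get $h^u \in \mathcal{J}_H(M)$ for every $u > 0$.

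For the smoothness and Schwartz decay in $u$, I would work on the Fourier side in the chart. The map $u \mapsto \alpha_{u*}g$ is smooth in $u$. As $u \to +\infty$, $|\tsp\diff\delta_u\eta|$ tends to infinity outside a neighborhood of the zero section, and the Schwartz decay of $\widehat{g}$ in $\eta$ (together with the decay in $t$ via $ut$) dominates the polynomial prefactor $u^{d_H}$. As $u \to 0$, $|\tsp\diff\delta_u\eta| \to 0$, and the ideal property of $g$ yields $|\widehat{g}(x,\tsp\diff\delta_u\eta,ut)| = O(u^N)$ for every $N$, uniformly on compacta in the remaining variables. These estimates persist under differentiation in $u$ and in the other variables, as derivatives only produce polynomial factors absorbed by the Schwartz/vanishing decay, so $u \mapsto \alpha_{u*}g$ lies in $\mathscr{S}(\R^*_+, \mathcal{J}_H(M))$. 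Convolution with the fixed $f \in \mathcal{J}_H(M)$ is a continuous endomorphism of $\mathcal{J}_H(M)$ (by the Schwartz algebra structure and the ideal property), so the Schwartz decay transfers to $h^u = f \ast \alpha_{u*}g$.

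The main obstacle is the non-abelian groupoid convolution on $\T_HM$: neither the pair groupoid product (for $t \neq 0$) nor the BCH-twisted product on the osculating fiber (at $t = 0$) turns into pointwise multiplication under the $\gt_HM$-direction Fourier transform, so one cannot simply estimate $\widehat{h^u}$ by a product of $\widehat{f}$ and $\widehat{\alpha_{u*}g}$. One circumvents this by treating convolution with $f$ as a continuous operator on the Fréchet topology of $\mathcal{J}_H(M)$: the $u$-decay of $\alpha_{u*}g$ is transparent on the Fourier side by zoom-equivariance, and propagates through the convolution without loss of order. The detailed bookkeeping in individual Fréchet semi-norms is routine given the Schwartz algebra and ideal structure developed earlier in the paper.
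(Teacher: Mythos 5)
There is a genuine gap, and it sits at the single load‑bearing step of your argument: the claim that $u\mapsto\alpha_{u*}g$ lies in $\mathscr{S}(\R^*_+,\mathcal{J}_H(M))$. This is false. The zoom action is a one‑parameter group of $*$-automorphisms of $C^*(\T_HM)$ which preserves $\mathscr{S}(\T_HM,\hd)$ through rescaling; in particular $\|\alpha_{u*}g\|=\|g\|$ in the $C^*$-norm and, more importantly, the Fréchet seminorms of $\mathscr{S}(\T_HM,\hd)$ do not decay in $u$. For instance, on the zero fiber one has $\sup_{x,\xi}\bigl|(\alpha_{u*}g)_0(x,\xi)\bigr|=\sup_{x,\xi}\bigl|g_0(x,\delta_{u^{-1}}\xi)\bigr|=\sup_{x,\xi}|g_0(x,\xi)|$, constant in $u$. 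The Fourier-side estimates you invoke are pointwise (``for $\eta$ outside a neighbourhood of the zero section'', ``uniformly on compacta''), but Schwartz seminorms require global suprema in $\eta$, and one checks directly that $\sup_\eta u^{d_H}|\widehat{g}(\tsp\diff\delta_u\eta)|$ is not small for $u$ away from $1$. Once this premise is removed, the concluding step ``convolution with $f$ is a continuous endomorphism, so the Schwartz decay transfers'' has nothing to carry over: there is no decay in $\alpha_{u*}g$ to transfer.

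The Schwartz decay of $u\mapsto f*\alpha_{u*}g$ is genuinely an interplay effect between $f$ and $g$: for $u$ far from $1$, $f$ and $\alpha_{u*}g$ are concentrated at very different scales on the fibers, and it is the convolution (together with the infinite-order Fourier vanishing at the zero section coming from \emph{both} $f$ and $g$) that produces the rapid decay. Your own paragraph on the ``main obstacle'' correctly notes that the groupoid convolution is not pointwise multiplication on the Fourier side, but the proposed circumvention (absorbing the decay into $\alpha_{u*}g$ alone) does not work for the reason above. The paper's proof sidesteps this by observing that the construction of the ideal $\mathcal{J}$ and the normalizer characterization of Proposition~\ref{Normalizer} make sense for any Lie groupoid with a filtered algebroid, and then applies them to the groupoid $\T_HM$ itself (its algebroid is naturally filtered by $TM$ and $\gt_HM$). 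In that setting the statement becomes exactly an instance of Proposition~\ref{Normalizer}: the rapid decay in $u$ is read off from a Taylor expansion at $u=0$ of the (doubly) rescaled Fourier transform, which encodes simultaneously the vanishing coming from $f$ and from $g$. Your first two steps (identifying $(f_t*g_{tu})_t$ with $f*\alpha_{u*}g$ and showing each $f*\alpha_{u*}g\in\mathcal{J}_H(M)$) are correct, but you need a different argument for the Schwartz decay in $u$ along the lines of the normalizer characterization rather than factoring the decay onto one input.
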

\begin{proof}
This is the same type of reasoning as in Proposition \ref{Normalizer} but applied to the groupoid $\T_HM$ instead of $M\times M$ (recall our construction is valid for an arbitrary groupoid). Here $\T_HM$ has its algebroid naturally filtered by the filtration on $TM$ and on $\gt_HM$, the $j$-th stratum of $\gt_HM$ being $\bigoplus_{i = 1}^j \faktor{H^i}{H^{i-1}}$.
\end{proof}

\begin{proposition}\label{SmoothMult}Let $f = (f_t)_{t\in \R} \in \mathcal{J}_H(M)$ and $P \in \Psi^m_H(M)$, then the family of functions $(f_t \ast P)_{t\in\R^*}$ extends to an element of $\mathcal{J}_H(M)$. Moreover its value at $t = 0$ is $f_0 \ast \sigma^m(P)$ with $\sigma^m(P) \in \mathcal{K}^m(T_HM)$ seen as a multiplier of $\mathscr{S}_0(T_HM)$.
\end{proposition}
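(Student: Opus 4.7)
The plan is to use the integral representation of pseudodifferential operators from the preceding proposition to reduce matters to the setting already controlled by the preceding lemma. Write $P = \int_0^{+\infty} s^{-m} g_s\,\frac{ds}{s} + R$ with $g \in \mathcal{J}_H(M)$ and $R \in \CCC^{\infty}_c(M\times M,\hd)$ smoothing, and treat the two pieces separately. The smoothing remainder is immediate: Proposition \ref{Normalizer} applied to $R$ shows that $(f_t \ast R)_{t\in\R^*}$ extends to an element of $\mathscr{S}(\R^*,\CCC^{\infty}_c(M\times M,\hd))\subset \mathcal{J}_H(M)$ which vanishes at infinite order at $t=0$, in agreement with $f_0 \ast \sigma^m(R) = 0$.

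For the main integral term, I would introduce the quasi-homogeneous extension $\PP \in \bbpsi^m_H(M)$ with slices $\PP_t := \int_0^{+\infty} s^{-m}(\alpha_{s*}g)_t\,\frac{ds}{s}$ furnished by the proof of Theorem \ref{PseudodiffIntégrales}, and define the extension of $(f_t \ast P)_{t\in\R^*}$ to be the groupoid convolution $F := f \ast \PP$ computed inside $\T_HM$. Substituting $s = tu$ in the defining integral rewrites the slice $F_t$ as $\int_0^{+\infty} u^{-m}(f_t\ast g_{tu})\,\frac{du}{u}$ (with the $t^{-m}$ prefactor absorbed into the zoom-invariant trivialization of $\hd$ used in the definition of $\mathscr{S}(\T_HM,\hd)$), and the preceding lemma asserts exactly that $u \mapsto (f_t\ast g_{tu})_{t\in\R} \in \mathscr{S}(\R^*_+,\mathcal{J}_H(M))$, so the integral converges absolutely in $\mathcal{J}_H(M)$.

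To identify the value at $t=0$, I would invoke the computation $\PP_0 = \int_0^{+\infty} s^{-m}\delta_{s*}g_0\,\frac{ds}{s} = \sigma^m(P) \in \mathcal{K}^m(T_HM)$ already established in the proof of Theorem \ref{PseudodiffIntégrales}, together with continuity of convolution on the central fiber $T_HM$: this yields $F_0 = f_0 \ast \PP_0 = f_0 \ast \sigma^m(P)$, which belongs to $\mathscr{S}_0(T_HM,\hd)$ by the multiplier property from Proposition \ref{MultSymbols}. The consistency $F_t = f_t\ast P$ at $t\neq 0$ then follows from the zoom-quasi-homogeneity of $\PP$ (which gives $\PP_t = t^m P$ for $t>0$ mod smoothing) combined with the compensating $t^m$ scaling in the zoom-invariant half-density trivialization.

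The main obstacle is exactly this bookkeeping: one has to verify that the chart-coordinate representation of $F = f\ast\PP$ in an exponential chart $\mathbb{U}\isomto\mathbb{V}$ is smooth across $t=0$ and satisfies the Fourier-vanishing condition on the zero section of $\gt_HM^*\times\R$ that defines $\mathcal{J}_H(M)$. This reduces to combining the Fourier vanishing of $f$ with the rational/rapid decay behavior of $\PP$ in chart coordinates, and is essentially the content of the preceding lemma once the substitution $s = tu$ has been performed fiberwise on $\T_HM$.
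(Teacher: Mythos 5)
Your proof follows the same route as the paper's (one-line) argument: write $P = \int_0^{+\infty} s^{-m} g_s\,\frac{ds}{s}$ with $g \in \mathcal{J}_H(M)$ via the preceding integral-representation proposition, and feed this into the lemma asserting $u \mapsto (f_t\ast g_{tu})_{t}\in\mathscr{S}(\R^*_+,\mathcal{J}_H(M))$.  The paper treats the decomposition as exact (by asymptotic completeness) so a separate smoothing remainder $R$ is not strictly needed, but splitting it off does no harm; and your identification of the $t=0$ value as $f_0\ast\PP_0 = f_0\ast\sigma^m(P)$ matches Theorem \ref{PseudodiffIntégrales}.

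One point you should not wave away: you claim the $t^{\pm m}$ discrepancy between $\PP_t$ and $P$ (coming from $\alpha_{\lambda*}\PP = \lambda^m\PP$ mod smoothing, i.e.\ $\PP_t \equiv t^m P$ for $t>0$) is ``absorbed into the zoom-invariant trivialization of $\hd$''.  That trivialization absorbs the Jacobian scaling $\lambda^{-d_H}$ with $d_H$ the homogeneous dimension of $M$; it has nothing to do with the order $m$ of $P$, which can be an arbitrary complex number.  As written, what your integral $F_t = \int_0^{+\infty}u^{-m}(f_t\ast g_{tu})\frac{du}{u}$ produces is $f_t\ast\PP_t$, and the passage from this to $f_t\ast P$ still carries the factor $t^m$, which is not addressed by the half-density bookkeeping.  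In the sequel (Proposition \ref{ContMult}) the statement is only invoked with $m=0$ where this is vacuous, but since the claim is stated for all $m\in\CC$ you either need to say precisely in which normalization the slices $(f_t\ast P)_t$ are understood, or restrict to $\Re(m)\le 0$, or separately justify why the extra power of $t$ disappears.
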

\begin{proof}
Write $P$ as an integral $\int_0^{+\infty}t^{-m}g_t\frac{\diff t}{t}$ then use the previous Lemma with $f$ and $g$.
\end{proof}

\section{Completion and crossed product of the pseudodifferential algebra}

We now consider the $C^*$-completion of the previous algebras. From now on we restrict the tangent groupoid over $\R_+$ and denote it by 
$$\T_H^+M:= \T_HM_{|M\times \R_+}.$$
We also denote by $\mathcal{J}_H^+(M)$ the space of restriction of functions in $\mathcal{J}_H(M)$ to $\T_H^+M$. We can consider the respective completions of the algebras $\mathscr{S}(\T_H^+M,\hd)$, $\mathcal{J}^+_H(M)$, and $\mathscr{S}(\R^*_+,\CCC^{\infty}_c(M\times M,\hd)$. We obtain the $C^*$-algebras $C^*(\T_H^+M), C^*_0(\T_H^+M)$, and $\CCC_0(\R^*_+,\mathcal{K}(L^2(M)))$. They sit in similar exact sequences\footnote{The $(0)$ means that the sequences are exact with or without the $0$.} as in the commutative case:
$$\xymatrix{0 \ar[r] & \CCC_0(\R^*_+,\mathcal{K}(L^2(M))) \ar[r] & C^*_{(0)}(\T_H^+M) \ar[r] & C^*_{(0)}(T_HM) \ar[r] & 0.}$$
We also denote by $\Psi^*_H(M)$ the $C^*$-completion of $\Psi^0_H(M)$.



\begin{proposition}\label{ContMult} Let $P \in \Psi^0_H(M)$, the action of on $\mathcal{J}_H^+(M)$ defined in Proposition \ref{SmoothMult} for \(m=0\) extends to a multiplier of $C^*_0(\T_HM^+)$. Moreover, the resulting morphism $\Psi^0_H(M) \to \m(C^*_0(\T_HM^+))$ is continuous. If in addition $P \in \Psi^{-1}_H(M)$, then its action preserves the ideal $\CCC_0(\R^*_+,\mathcal{K}(L^2(M)))$.
\end{proposition}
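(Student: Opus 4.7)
The plan is to use the continuous field structure of $C^*_0(\T^+_HM)$ over $\R_+$, where the fiber at $t>0$ is $\mathcal{K}(L^2(M))$, the fiber at $t=0$ is $C^*_0(T_HM)$, and the $C^*$-norm is the supremum of fiber norms. The key estimate to establish first is
\[\|f\ast P\|_{C^*_0(\T^+_HM)} \;\leq\; \|f\|_{C^*_0(\T^+_HM)}\cdot\|P\|_{\Psi^*_H(M)},\qquad f\in\mathcal{J}^+_H(M).\]
For each $t>0$, this is the elementary bound $\|f_t\circ P\|_{\mathcal{K}}\leq\|f_t\|\,\|P\|$ from composition on $L^2(M)$. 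For $t=0$, Proposition \ref{SmoothMult} identifies $(f\ast P)_{|t=0}$ with the convolution $f_0\ast\sigma(P)$ by the principal symbol $\sigma(P)\in\Sigma(T_HM)$, which Proposition \ref{MultSymbols} realizes as a multiplier of $C^*_0(T_HM)$ of norm at most $\|\sigma(P)\|_{\Sigma(T_HM)}\leq\|P\|_{\Psi^*_H(M)}$ (the last step using the principal symbol exact sequence $0\to\mathcal{K}\to\Psi^*_H(M)\to\Sigma(T_HM)\to 0$). Taking the supremum over fibers gives the claimed global estimate.

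With this estimate, the right action $R_P\colon f\mapsto f\ast P$ extends by continuity from $\mathcal{J}^+_H(M)$ to $C^*_0(\T^+_HM)$; the left action $L_P(f)=P\ast f$ is obtained symmetrically, either directly from Proposition \ref{SmoothMult} applied on the left, or via the involution $L_P(f)=(f^*\ast P^*)^*$ (since $P^*\in\Psi^0_H(M)$). The multiplier compatibility axiom $a L_P(b)=R_P(a)b$ reduces fiberwise to associativity of operator composition at $t>0$ and to the multiplier axiom of $\sigma(P)$ at $t=0$, both of which hold on the dense $*$-subalgebra $\mathcal{J}^+_H(M)$ and extend by continuity. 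Continuity of the morphism $\Psi^0_H(M)\to\m(C^*_0(\T^+_HM))$ is then immediate from $\|R_P\|\leq\|P\|_{\Psi^*_H(M)}$.

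For the last claim, if $P\in\Psi^{-1}_H(M)$ then the order-$0$ principal symbol $\sigma^0(P)$ vanishes in $\Sigma(T_HM)$ by exactness of $0\to\Psi^{-1}_H(M)\to\Psi^0_H(M)\to\Sigma^0_c(T_HM)\to 0$, so $(R_P(f))_{|t=0}=f_0\ast\sigma^0(P)=0$ for every $f$. Hence $R_P(f)$ lies in $\ker(\ev_0)=\CCC_0(\R^*_+,\mathcal{K}(L^2(M)))$, and in particular this ideal is stabilized. The main obstacle I expect is precisely the fiberwise identification at $t=0$: Proposition \ref{SmoothMult} is formulated distributionally on the dense subspace $\mathcal{J}^+_H(M)$, so some care is needed to verify that the distributional convolution $f_0\ast\sigma(P)$ agrees at the $C^*$-level with the multiplier action of $\sigma(P)$ given by Proposition \ref{MultSymbols}. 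This should follow from a density argument exploiting the continuity of the symbol map with respect to the quotient norm $\Psi^*_H(M)/\mathcal{K}$, but it is the step that requires the most caution.
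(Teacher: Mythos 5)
Your proposal follows the same strategy as the paper: use the continuous-field decomposition of $C^*_0(\T^+_HM)$ over $\R_+$, bound the norm fiberwise (the $t>0$ fibers by operator norm on $L^2(M)$, the $t=0$ fiber via continuity of the principal symbol map), and observe that for $P\in\Psi^{-1}_H(M)$ the $t=0$ action vanishes because $\sigma^0(P)=0$. The extra care you flag at $t=0$ is indeed exactly what Proposition~\ref{SmoothMult} supplies, so the proof is essentially identical to the paper's.
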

\begin{proof}
Recall that $C^*_0(\T_H^+M)$ is a continuous field of $C^*$-algebras over $\R_+$ with fiber at $t\neq 0$ equal to $\mathcal{K}(L^2(M)) = C^*(M\times M)$ and its fiber at $t=0$ is $C^*_0(T_HM)$. We thus have for each $t\neq 0$, $\|(P\ast f)_t\| = \|P\|_{\Psi^*}\|f_t\|$ and $\|(P\ast f)_0\| \leq \|\sigma(P)\|\|f_0\| \leq \|P\|_{\Psi^*}\|f_0\|$. Here $\|\cdot\|_{\Psi^*}$ denotes the norm of $\Psi^*_H(M)$, i.e. the norm of the operator $P$ acting on $L^2(M)$ (this operator is bounded because \(P\) has order \(0\)). The last inequality follows from the continuity of the principal symbol map. From these inequalities it follows that:
$$\forall f \in \mathcal{J}_H^+(M), \|P\ast f\| \leq \|P\|_{\Psi^*}\|f\|.$$
Therefore the operator $(f_t)_{t\geq 0} \mapsto (P\ast f_t)_{t\geq 0}$ extends continuously to an operator on $C^*_0(\T_HM)$. The morphism $\Psi^0_H(M) \to \m(C^*_0(\T_HM))$ is continuous and extends to 
$$\Psi^*_H(M) \to \m(C^*_0(\T_HM)).$$
The last statement follows from the fact that the restriction of $P\ast f$ to the zero fiber is given by the action of the principal symbol (of order $0$) of $P$ on $f_0$.  This action is thus zero if $P \in \Psi^{-1}_H(M)$.
\end{proof}

\begin{remmark}More precisely, if $P \in \Psi^{-1}_H(M)$ and $f \in \mathcal{J}_H^+(M)$ then one can prove that $P\ast f \in \mathscr{S}(\R^*_+,\CCC^{\infty}_c(M\times M,\hd))$.\end{remmark}

Let $\Delta \in \Psi_H^q(M)$ be a positive Rockland differential operator (for some even $q>0$). Using Theorem \ref{ComplexOp} we have a continuous family of operators $\Delta^{\ii t/q} \in \Psi_H^{\ii t}(M)$. These operators act on $\mathcal{J}_H^+(M)$ as the order zero operators. Analogously to the previous proposition, this action extends to unitary multipliers of $C^*_0(\T_HM^+)$. The operators also give an $\R$-action on $\Psi^*_H(M)$ with $\Ad(\Delta^{\ii t/q})$. We thus get a *-homomorphism:
$$\Phi \colon \Psi^*_H(M) \rtimes \R \to C^*_0(\T_H^+M).$$

\begin{theorem}The morphism $\Phi$ preserves the respective exact sequences of $\Psi^*_H(M)$ and $C^*_0(\T_HM^+)$, i.e. there is a commutative diagram:
$$\xymatrix{0 \ar[r] & \CCC_0(\R^*_+,\mathcal{K}(L^2(M))) \ar@{=}[d] \ar[r] & \Psi^*_H(M) \rtimes \R \ar[d]^{\Phi} \ar[r] & \Sigma(T_HM) \rtimes \R \ar[d]^{\Phi_0} \ar[r] & 0 \\
0 \ar[r] & \CCC_0(\R^*_+,\mathcal{K}(L^2(M))) \ar[r] & C^*_{0}(\T_H^+M) \ar[r] & C^*_{0}(T_HM) \ar[r] & 0.}$$
Here $\Phi_0 \colon \Sigma(G) \rtimes \R \to C^*_0(T_HM)$ is the isomorphism of the Section \ref{Section:IsomorphismSymbols}. In particular $\Phi$ is an isomorphism.
\end{theorem}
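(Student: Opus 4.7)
The plan is to apply the five lemma: provided both squares commute and the outer two vertical arrows are isomorphisms, the middle map $\Phi$ must be an isomorphism as well. The rightmost arrow $\Phi_0$ is an isomorphism by the corollary following Theorem~\ref{ProofOfIsom}, applied to the smooth family of graded groups $T_HM \to M$. To make sense of the top row, I would first check that the $\R$-action $\Ad(\Delta^{\ii t/q})$ on $\Psi^*_H(M)$ preserves the ideal $\mathcal{K}(L^2(M))$; this is immediate since $\Delta^{\ii t/q}$ is a unitary of $\mathcal{B}(L^2(M))$ and conjugation by a unitary preserves compacts. Since $\R$ is amenable, the induced sequence of crossed products is exact. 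Moreover, the $\R$-action on the ideal is inner, so there is a canonical isomorphism $\mathcal{K}(L^2(M)) \rtimes_{\Ad} \R \cong \mathcal{K}(L^2(M)) \otimes C^*(\R) \cong \mathcal{K}(L^2(M)) \otimes \CCC_0(\widehat{\R})$; identifying $\widehat{\R} \cong \R^*_+$ via the exponential (matching the dual of the zoom action) recovers the identification of the leftmost term of the top row with $\CCC_0(\R^*_+, \mathcal{K}(L^2(M)))$ stated in the diagram.

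For the right square, the key tool is Proposition~\ref{SmoothMult}: for $P \in \Psi^0_H(M)$ and $g \in \mathcal{J}^+_H(M)$, the value $(P \ast g)_0$ is $\sigma^0(P) \ast g_0$, and similarly $(\Delta^{\ii t/q} \ast g)_0 = \Delta_0^{\ii t/q} \ast g_0$, where $\Delta_0 = \sigma^q(\Delta)$ is the principal symbol of $\Delta$. Passing to the quotient $C^*_0(\T_H^+M)/\CCC_0(\R^*_+, \mathcal{K}(L^2(M))) \cong C^*_0(T_HM)$ (i.e.\ restriction to the $t=0$ fiber), the $*$-homomorphism $P \mapsto [g \mapsto P \ast g]$ descends to the homomorphism $\sigma^0(P) \mapsto [g_0 \mapsto \sigma^0(P) \ast g_0]$, which is precisely the morphism $\vphi$ of Proposition~\ref{MultSymbols}. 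Likewise the unitaries $u_t$ descend to $\Delta_0^{\ii t/q}$. Since $\Delta_0$ is an essentially self-adjoint, non-negative Rockland element of $\Gamma(M, \mathcal{U}_q(\gt_HM))$ (being the principal symbol of such an operator $\Delta$), this is exactly the data used in Section~\ref{Section:IsomorphismSymbols} to define $\Phi_0$ for the family $T_HM \to M$, so the induced map on the quotient coincides with $\Phi_0$.

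For the left square, I would verify that $\Phi$ restricted to $\mathcal{K}(L^2(M)) \rtimes_{\Ad} \R$ coincides with the inner-action identification described in the first paragraph. Using Proposition~\ref{ContMult} and the remark following it, compact operators (being $C^*$-limits of operators of negative order) act on $C^*_0(\T_H^+M)$ as multipliers that preserve the ideal $\CCC_0(\R^*_+, \mathcal{K}(L^2(M)))$; for elements of the crossed product coming from $L^1$-integrals in the $t$-variable, the image actually lies in the ideal, not just in its multipliers, because the Fourier integral in $t$ lands in $\CCC_0$. By the $\R^*_+$-equivariance of $\Phi$ (the dual action on the source, the zoom on the target) it then suffices to check agreement on a single transversal. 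A direct fiberwise computation, identifying $\int_{\R} \xi(t)\, K \Delta^{\ii t/q}\, dt$ with the Plancherel-Fourier image of $K \otimes \xi$ inside $\mathcal{K}(L^2(M)) \otimes \CCC_0(\R^*_+)$, completes the verification. The main obstacle will be the bookkeeping of this last normalization — specifically that the $\R^*_+$-parameter obtained from the spectrum of $\Delta^{1/q}$ matches the $\R^*_+$-coordinate of the zoom on $C^*_0(\T_H^+M)|_{t>0}$ — but once this is settled, the five lemma concludes the proof.
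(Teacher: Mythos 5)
Your proposal is correct and follows essentially the same route as the paper: both invoke Propositions~\ref{ContMult} and~\ref{SmoothMult} to show the squares commute, observe that the $\R$-action on $\mathcal{K}(L^2(M))$ is inner (since each $\Delta^{\ii t/q}$ acts as a unitary multiplier of $\overline{\Psi^{-1}_H(M)} = \mathcal{K}(L^2(M))$), identify the quotient map with $\Phi_0$ via $\Delta_0 = \sigma^q(\Delta)$, and close with the short five lemma. The normalization issue you flag at the end is not treated explicitly in the paper; it is absorbed into the $\R^*_+$-equivariance built into the definitions of $\Phi$ and the zoom action, so your instinct to track it is sound but does not point to a gap.
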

\begin{proof}
This is a direct consequence of the previous proposition. The fact that the crossed product is trivial on the ideal in the first row is because each $\Delta^{\ii t/q}$ extends to a multiplier of $\mathcal{K}(L^2(M)) = \overline{\Psi^{-1}_H(M)}$. Using Proposition \ref{SmoothMult}, we get that $\Phi_0$ is the isomorphism of the Section \ref{Section:IsomorphismSymbols} using the symbol \(\Delta_0 := \sigma^q(\Delta)\). The fact that $\Phi$ is an isomorphism then follows from the short five lemma.
\end{proof}

From this result we can state another one similar to Debord and Skandalis original approach, also obtained in the filtered case by Ewert in \cite{Ewert}:

\begin{corollary}\label{Cor:MoritaEq}There is a canonical bimodule yielding a Morita equivalence between the algebras $\Psi^*_H(M)$ and $C^*_0(\T_HM)\rtimes_{\alpha} \R^*_+$ preserving their respective exact sequences.\end{corollary}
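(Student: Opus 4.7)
The plan is to derive this Morita equivalence from the isomorphism \(\Phi \colon \Psi^*_H(M) \rtimes \R \isomto C^*_0(\T_H^+M)\) established in the previous theorem by invoking Takai duality for \(\R\). Recall that for any \(\R\)-action \(\alpha\) on a \(C^*\)-algebra \(A\), Takai duality supplies a canonical isomorphism \((A\rtimes_\alpha \R)\rtimes_{\hat\alpha}\hat\R \cong A\otimes \mathcal{K}(L^2(\R))\), and hence a canonical Morita equivalence between \(A\) and \((A\rtimes_\alpha \R)\rtimes_{\hat\alpha}\hat\R\).

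The first step is to check that \(\Phi\) intertwines the zoom \(\R^*_+\)-action \(\alpha\) on \(C^*_0(\T_H^+M)\) with the dual action of \(\hat\R\) on \(\Psi^*_H(M)\rtimes \R\), under the identification \(\hat\R\cong \R^*_+\) by \(s\mapsto e^s\). This may be verified on generators: on the canonical unitary multipliers \(u_t\) coming from the complex powers of \(\Delta\), homogeneity of degree \(\ii t\) under the dilations yields \(\alpha_\lambda(u_t) = \lambda^{\ii t}u_t\), exactly matching the formula for the dual action, while on the image of \(\vphi\) both actions are trivial since those multipliers are homogeneous of degree \(0\). Takai duality then gives
\[C^*_0(\T_H^+M)\rtimes_\alpha \R^*_+ \;\cong\; (\Psi^*_H(M)\rtimes \R)\rtimes \hat\R \;\cong\; \Psi^*_H(M)\otimes \mathcal{K}(L^2(\R)),\]
yielding the canonical Morita equivalence, implemented by the bimodule \(\Psi^*_H(M)\otimes L^2(\R)\).

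For compatibility with the exact sequences, I would use exactness of the \(\R^*_+\)-crossed product functor (by amenability of \(\R^*_+\)): the bottom sequence of the diagram in the previous theorem remains exact after crossing with \(\R^*_+\), namely
\[0 \to \CCC_0(\R^*_+,\mathcal{K}(L^2(M)))\rtimes\R^*_+ \to C^*_0(\T_H^+M)\rtimes\R^*_+ \to C^*_0(T_HM)\rtimes\R^*_+ \to 0.\]
The ideal simplifies: since \(\R^*_+\) acts on the \(\CCC_0(\R^*_+)\)-factor by translation (freely and properly), one obtains \(\mathcal{K}(L^2(M))\otimes \mathcal{K}(L^2(\R^*_+))\), Morita equivalent to \(\mathcal{K}(L^2(M))\). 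For the quotient, the isomorphism \(\Phi_0\colon \Sigma(T_HM)\rtimes \R\isomto C^*_0(T_HM)\) of Section~\ref{Section:IsomorphismSymbols}, combined once more with Takai duality, gives \(C^*_0(T_HM)\rtimes\R^*_+\cong \Sigma(T_HM)\otimes\mathcal{K}(L^2(\R))\), Morita equivalent to \(\Sigma(T_HM)\). Naturality of all these identifications ensures that the bimodule \(\Psi^*_H(M)\otimes L^2(\R)\) restricts on the ideals and descends on the quotients in a way compatible with the exact sequences claimed.

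I expect the main obstacle to be purely bookkeeping: verifying that the several identifications (\(\hat\R\cong \R^*_+\) via \(s\mapsto e^s\), the normalization of the complex powers \(\Delta^{\ii t/q}\), the conventions for dual actions, and the zoom convention \(\alpha_\lambda(x,y,t)=(x,y,\lambda^{-1}t)\)) are mutually compatible so that the equivariance of \(\Phi\) in the first step holds on the nose. Once that equivariance is secured, no new analytic input is required; the rest is a direct functorial consequence of Takai duality together with amenability of \(\R^*_+\).
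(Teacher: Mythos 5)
Your proposal takes exactly the route the paper does — Takai duality applied to the isomorphism $\Phi\colon \Psi^*_H(M)\rtimes\R\isomto C^*_0(\T^+_HM)$, with the zoom action identified as the dual action via the degree-$\ii t$ homogeneity of the multipliers $u_t$ coming from $\Delta^{\ii t/q}$ — and the paper's own proof is just the one-sentence statement of that observation. Your elaboration of the equivariance check on generators and of the compatibility of exact sequences under the $\R^*_+$-crossed product is a correct filling-in of details the paper leaves implicit.
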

\begin{proof}
The zoom action $\alpha$ is dual to the one defined with $(\Delta^{\ii t/q})_{t\in \R}$ thus this is just an application of Takai duality.
\end{proof}

\begin{remmark}
    One can also describe the equivalence bimodule directly as in \cite{debord2014adiab}. Using Proposition \ref{SmoothMult}, we get a (right) \(\Psi^0_H(M)\)-module structure on \(\mathcal{J}^+_H(M)\). Using the \(\Psi^0_H(M)\) values product:
    \[(f,g) \mapsto \int_0^{+\infty} f_t^*\ast g_t \frac{\diff t}{t},\]
    we can complete \(\mathcal{J}_H^+(M)\) into a full \(\Psi^*_H(M)\)-Hilbert module, \(\mathcal{E}_H\). The approach of Debord and Skandalis would then be to show that the natural map \(C^*_0(\T_H^+M)\rtimes \R^*_+ \to \mathcal{L}(\mathcal{E}_H)\) induces an isomorphism:
    \[C^*_0(\T_H^+M)\rtimes \R^*_+ \cong \mathcal{K}(\mathcal{E}_H).\]
    Then one can show that the module \(\mathcal{E}_H\) is stable, in particular, \(\mathcal{K}(\mathcal{E}_H) \cong \mathcal{K} \otimes \Psi^*_H(M)\) and we get the desired Morita equivalence.
\end{remmark}

The previous result allows for some index theoretic consequences\footnote{The first KK-equivalence is also in \cite{Ewert}.}:

\begin{proposition}Let $(M,H)$ be a compact filtered manifold then the algebras of symbols $\Sigma(T_HM)$ and $\Sigma(TM)$ are KK-equivalent. The pseudodifferential algebras $\Psi^*_H(M)$ and $\Psi^*(M)$ are KK-equivalent as well.
\end{proposition}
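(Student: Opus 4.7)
The plan is to derive this KK-equivalence from the main isomorphism established above, combined with Connes' Thom isomorphism and Mohsen's symbol-level KK-equivalence.

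The first statement, \(\Sigma(T_HM) \sim_{KK} \Sigma(TM)\), is the content of \cite{mohsen2020index} (and also of \cite{Ewert}), obtained via a deformation groupoid interpolating between \(T_HM\) and \(TM\) in the spirit of \cite{BaumVanerp}. For the second statement I would combine the main theorem, \(\Psi^*_H(M)\rtimes\R \cong C^*_0(\T^+_HM)\), with the classical Debord--Skandalis isomorphism \(\Psi^*(M)\rtimes\R \cong C^*_0(\T^+M)\) of \cite{debord2014adiab}. By Connes' Thom isomorphism, the crossed product by \(\R\) preserves KK-equivalence up to suspension (and the suspension cancels on both sides); the desired \(\Psi^*_H(M) \sim_{KK} \Psi^*(M)\) therefore reduces to the tangent-groupoid statement \(C^*_0(\T^+_HM) \sim_{KK} C^*_0(\T^+M)\).

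To establish this last equivalence I would upgrade Mohsen's deformation groupoid to a bi-deformation groupoid \(\mathcal{D} \rr M\times\R_+\times[0,1]\), where \(s\in[0,1]\) is the filtration-deformation parameter and \(t\in\R_+\) is the adiabatic parameter. By design one would have \(\mathcal{D}|_{s=0} = \T^+_HM\), \(\mathcal{D}|_{s=1} = \T^+M\), and the restriction \(\mathcal{D}|_{t=0}\) would recover Mohsen's original deformation between \(T_HM\) and \(TM\). The restriction of \(\mathcal{D}\) to the slab \(s\in(0,1]\) would be a continuous family of groupoids all naturally isomorphic to \(\T^+M\); the corresponding ideal in \(C^*_0(\mathcal{D})\) would then be a cone \(\CCC_0((0,1])\otimes C^*_0(\T^+M)\), hence KK-trivial. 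Consequently both endpoint evaluations would be KK-equivalences, yielding \(C^*_0(\T^+_HM) \sim_{KK} C^*_0(\T^+M)\) and hence the pseudodifferential statement.

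The principal obstacle is the construction of \(\mathcal{D}\) with a compatible smooth structure in both parameters. Mohsen's groupoid at the \(T\)-level is built from a specific vector-bundle deformation encoding the osculating-to-tangent degeneration; upgrading to the adiabatic setting requires combining it with the van Erp--Yuncken exponential charts used for \(\T_HM\) itself, while ensuring that the smooth structure near the corner \((s,t)=(0,0)\) recovers the graded filtered algebroid. Once \(\mathcal{D}\) is constructed, the KK-theoretic manipulations above are routine.
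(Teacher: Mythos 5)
Your reduction of the pseudodifferential statement to a tangent-groupoid KK-equivalence is correct and agrees with the paper's strategy: the paper uses the Morita equivalence $\Psi^*_H(M)\sim_{\mathrm{mor}} C^*_0(\T^+_HM)\rtimes\R^*_+$ (Takai duality from Corollary~\ref{Cor:MoritaEq}) where you use Connes' Thom on the $\R$-crossed product — both reduce to $C^*_0(\T^+_HM)\sim_{KK}C^*_0(\T^+M)$. For the symbol-level statement, the paper does not invoke \cite{mohsen2020index} directly: it applies the Connes--Thom isomorphism fiberwise to the group bundles $T_HM$ and $TM$ (giving $C^*(T_HM)\sim_{KK}C^*(TM)$ $\R^*_+$-equivariantly), combined with the Morita equivalence $\Sigma(T_HM)\sim_{\mathrm{mor}}C^*_0(T_HM)\rtimes\R^*_+$; your black-box citation is an acceptable alternative.

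Where your proposal diverges from the paper is in the bi-parameter groupoid. You propose to \emph{construct} a new deformation $\mathcal{D}\rr M\times\R_+\times[0,1]$ that merges Mohsen's degeneration with the adiabatic direction, and you flag the smooth structure near the corner as the main obstacle. The paper's observation is more economical: the required bi-deformation is simply the \emph{classical adiabatic groupoid of $\T_HM$} restricted over $[0;1]^2$ — a standard construction available for any Lie groupoid — which (after a choice of splitting) identifies, under the flip $(s,t)\mapsto(t,s)$, with the \emph{filtered} adiabatic of $\T M$. No new smooth manifold needs to be built; the relation to Mohsen's deformation appears as the $t=0$ slice $T_HM^{\mathrm{ad}}_{|[0;1]}$. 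This sidesteps the obstacle you identified.

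There is also a genuine gap in your KK argument. From the exact sequence $0\to\CCC_0((0,1])\otimes C^*_0(\T^+M)\to C^*_0(\mathcal{D})\to C^*_0(\T^+_HM)\to 0$ the cone property of the ideal gives that $\mathrm{ev}_{s=0}$ is a KK-equivalence; it does \emph{not} follow that $\mathrm{ev}_{s=1}$ is one. The kernel of $\mathrm{ev}_{s=1}$ is $C^*_0(\mathcal{D}_{|s\in[0,1)})$, an extension of $C^*_0(\T^+_HM)$ by a suspension $\CCC_0((0,1))\otimes C^*_0(\T^+M)$, and its KK-triviality is equivalent to essentially the statement being proved (the same phenomenon that makes $\mathrm{ev}_{t=1}$ on Connes' tangent groupoid fail to be a KK-equivalence: it encodes the analytic index). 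The paper circumvents this by chaining KK-equivalences through the "corner" slices of $[0;1]^2$ (the $s=0$ face $TM\times[0;1]$ and the $t=0$ face $T_HM^{\mathrm{ad}}_{|[0;1]}$, both of whose complementary ideals are genuine cones over $(0,1]$), then arguing that the resulting equivalence commutes with the common evaluation so it restricts to the half-open intervals $[0;1)$, which are finally identified with $\R_+$. You would need an additional input (e.g.\ the Connes--Thom equivalence $C^*(T_HM)\sim_{KK}C^*(TM)$, used $\R^*_+$-equivariantly) at this point to make $\mathrm{ev}_{s=1}$ invertible in KK.
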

\begin{proof}
We use the last corollary to get a Morita equivalence 
$$\Sigma(T_HM) \sim_{mor} C^*_0(T_HM)\rtimes \R^*_+.$$
The same goes in the commutative case with $\Sigma(TM) \sim_{mor} C^*_0(TM)\rtimes  \R^*_+$ (which is just the isomorphism $\CCC_0(\mathbb{S}^*M) \cong \CCC_0(T^*M\setminus 0)\rtimes \R^*_+$). Now we can use the Connes-Thom isomorphism for both groups bundles $T_HM$ and $TM$ to get a KK-equivalence $C^*(TM)\sim_{KK}C^*(T_HM)$ which restricts to the kernels of trivial representations in a $\R^*_+$-equivariant way, we thus get $C^*_0(TM) \sim_{KK^{\R^*_+}} C^*_0(T_HM)$ and thus the desired KK-equivalence by composition (remember a Morita equivalence induces a KK-equivalence).

For the pseudodifferential algebras we will use the Morita equivalence of Corollary \ref{Cor:MoritaEq}: $\Psi^*_H(M) \sim_{mor} C^*_0(\T^+_HM)$. We now need to show a KK-equivalence: 
$$C^*_0(\T^+_HM) \sim_{KK} C^*_0(\T^+M).$$
Using the (classical) adiabatic groupoid of $\T_HM$ \footnote{One could also use the filtered adiabatic groupoid of $\T M$ for the filtration given by the $H^i \times \R$. This results in an isomorphic groupoid under the flip of coordinates $(s,t)\mapsto (t,s)$ on $\R^2$.} and restricting it over $[0;1]^2$, we obtain a groupoid $\mathbb{G}\rr M\times [0;1]^2$. It has the following restrictions with coordinates $(s,t) \in [0;1]^2$:
\begin{itemize}
\item the restriction to a fiber for $s \neq 0$ of $\mathbb{G}$ is $\T_HM_{|[0;1]}$
\item the restriction to the fiber $s = 0$ of $\mathbb{G}$ is $TM\times [0;1]$ (after a choice of splitting for $\gt_HM$).
\item the restriction to a fiber for $t \neq 0$ of $\mathbb{G}$ is $\T M_{|[0;1]}$
\item the restriction to the fiber $t = 0$ of $\mathbb{G}$ is $T_HM^{ad}_{|[0;1]}$
\end{itemize}
Now using the $KK$-equivalences obtained by the exact sequences induces by the row and columns of the square we get the KK-equivalences:
\begin{align*}
C^*(\T_HM_{|[0;1]}) &\sim C^*(TM\times [0;1]) \\
				   &\sim C^*(TM) \\
				   &\sim C^*(T_HM^{ad}_{|[0;1]}) \\
				   &\sim C^*(\T M_{|[0;1]})
\end{align*}
This KK-equivalence commutes with the evaluation maps on the same fiber and thus restricts to a KK-equivalence between the groupoid algebras $C^*(\T_HM_{|[0;1)})$ and $C^*(\T M_{|[0;1)})$. We can then use a homeomorphism between $[0;1)$ and $\R_+$ to get a KK-equivalence $C^*(\T^+_HM) \sim C^*(\T^+M)$. By construction this equivalence\footnote{The same exact sequences used to obtain the KK-equivalences can be written for the $C^*_0$ algebras.} restricts to a KK-equivalence between $C^*_0(\T^+_HM)$ and $C^*_0(\T^+M)$  in an $\R^*_+$ equivariant way. We thus obtain the KK-equivalence between their crossed products $C^*_0(\T^+_HM)\rtimes \R^*_+$ and $C^*_0(\T^+M)\rtimes \R^*_+$ and thus between $\Psi^*_H(M)$ and $\Psi^*(M)$. \qedhere
\end{proof}

Roughly speaking, the last proposition shows that one could not hope to obtain further index theoretic invariants from the filtered calculus, other than the ones that could previously be obtained from the classical pseudodifferential calculus. This however does not help when one wants to compute the index of an actual operator, as the difficulty then lies in computing the inverse of the Connes-Thom isomorphism that was used here (see \cite{BaumVanerp,mohsen2020index}).
\nocite{*}
\bibliographystyle{plain}
\bibliography{Ref}

\end{document}